\newcommand{\hatM}{\widehat{M}}
\newcommand{\hatA}{\widehat{A}}
\newcommand{\hatB}{\widehat{B}}
\newcommand{\hatE}{\widehat{E}}
\newcommand{\IIh}{\mcI_{h,I}}
\newcommand{\bur}[1]{{\color{black}#1}}
\newcommand{\mgl}[1]{{\color{black}#1}}
\DeclareMathOperator{\Cyl}{Cyl}
\title{Explicit Time Stepping for the Wave Equation using CutFEM with Discrete Extension}
\date{\today}
\author{
Erik Burman,
Peter Hansbo,
Mats G. Larson 
}
\date{}
\begin{document}

\maketitle

\begin{abstract}
In this note we develop a fully explicit cut finite element method for
the wave equation. The method is based on using a standard leap frog scheme 
combined with an extension operator that defines the nodal values outside of the 
domain in terms of the nodal values inside the domain. We show that the mass 
matrix associated with the extended finite element space can be lumped leading 
to a fully explicit scheme. We derive stability estimates for the method and provide 
optimal order a priori error estimates. Finally, we present some illustrating numerical 
examples.
\end{abstract}

\section{Introduction}

\paragraph{New Contributions.}
Let $\Omega \subset \mathbb{R}^d$, with $d\geq 2$  be an open connected domain 
with smooth boundary $\Gamma$. We consider the wave equation: find $u:[0,T)
\rightarrow H^2(\Omega)$ such that 
\begin{equation}\label{eq:wave}
\frac{\partial^2 u }{\partial t^2} - \Delta u = f \quad \text{in $(0,T) \times \Omega$},
\qquad u = 0\quad  \text{on $(0,T) \times \Gamma$} 
\end{equation}
with initial data $u= u_0$ and $\partial u/\partial t = u_1$ at $t=0$, and right hand 
side $f:[0,T) \rightarrow L^2(\Omega)$.
The objective of the present note is to design an explicit cut finite element 
method for the approximation of solutions to \eqref{eq:wave}. The method 
uses a leapfrog scheme for the time discretisation combined with an extension 
operator which provides values in nodes outside of the domain in terms of the 
interior nodal values. The extension is based on a composition of an extension 
operator from interior elements into the space of discontinuous piecewise polynomials 
and an average operator that projects into the continuous finite element space. 
The framework is quite general, allows for several natural implementations, is 
convenient for analysis, and may be viewed as a generalization of previous 
constructions, see \cite{BadVer18}. We prove stability and 
interpolation results for the extended finite element space. To 
construct a purely explicit scheme we show that the mass matrix associated with 
the extended finite element space can indeed be lumped while preserving optimal 
order for piecewise linear elements. Key to this result is the fact that the elements 
in the mass matrix associated with the extended finite element space are all non 
negative, which is not the case for popular stabilization procedures such as
stabilization of the jump in derivatives across faces. 

Combining cut finite elements, the extension operator, and mass lumping we obtain a 
very simple fast explicit method which can handle complex geometric situations thanks to the flexibility provided by the cut finite element method.

We note that the discrete extension operator provides an alternative to weak 
stabilization of the cut elements through the bilinear form which controls jumps 
in derivatives across faces. The extension operator is therefore of interest in its own 
right and may find other applications, for instance, for the computation of physical 
fluxes in the shifted boundary method. Furthermore, our construction and theory 
of the extension operator extends to higher order polynomials. Since our focus is 
on explicit lumped methods we restrict the presentation to piecewise linears.

\paragraph{Previous Work.} Cut finite elements allow the boundary of the domain to 
cut through an underlying fixed mesh in an arbitrary manner. This procedure manufactures 
so called cut elements in the vicinity of the boundary that may lead to stability problems 
and bad conditioning of the resulting algebraic equations. The remedy is to add some form 
of stabilization for instance a weak least squares control on the jump in the normal gradient 
across element faces, so called ghost penalty, see \cite{Bur10,BurHan12,HanLar17,MasLar14} for various applications of this concept. Another approach 
to handle cut elements is to eliminate them using agglomeration 
where small elements are connected to larger elements in order to form an element with a 
sufficiently large intersection with the domain, see \cite{JohLar13} for a discontinuous method, 
and \cite{BadVer18} for an extension operator where degrees of freedom associated with external nodes are eliminated using a local average of internal node values. For a general 
introduction to cut finite element methods we refer to the overview article \cite{BurCla15}.

Error analysis of finite element methods for the wave equation was originally developed in
early papers including, \cite{Dup73,Bak76,BakDou76}, space time methods 
were proposed and analysed in \cite{HulHug84} and \cite{Joh93}.  Recent works on wave 
equations focus on explicit schemes \cite{DiaGro09,DroMoh20} and discontinuous 
Galerkin methods  \cite{GroSch06,GroSch09}. Cut finite element methods for the 
wave equation were developed in \cite{StiKre19} and \cite{StiLud20}, in particular the authors 
consider higher order elements with face stabilization combined with an explicit Runge-Kutta 
time stepping scheme which involves inversion of the mass matrix.

\paragraph{Outline.} In Section 2 we first introduce the discrete extension operator 
and derive stability estimates and interpolation error bounds for the extended finite element space. Then we formulate the finite element method. In Section 3 we prove a stability 
estimate for the method and then we prove optimal order a priori error estimates taking 
also lumping of the mass matrix into account. Finally, in Section 4 we present illustrating numerical examples.

\section{The Finite Element Method}
\subsection{Standard Notation}
We shall use the following standard notation. $H^s(\omega)$ denotes the Sobolov spaces of order $s$ over the set $\omega$ with norm $\| \cdot \|_{H^s(\omega)}$. For $s=0$ we write $L^2(\omega) = H^0(\omega)$ and $\| \cdot \|_{L^2(\omega)} = \| \cdot \|_\omega$. 
In the case $\omega = \Omega$ we simplify further and write $\| \cdot \|_{L^2(\Omega)} = \| \cdot \|$. The $L^2(\omega)$ inner product is denoted by  $(v,w)_\omega = \int_\omega v w$ and for $\omega = \Omega$ we write $(v,w)_\Omega = (v,w)$.

\subsection{Mesh and Finite Element Spaces}
We introduce the following notation:
\begin{itemize}

\item We let $\Omega_0$ be a polygonal domain with $\Omega \subset
\Omega_0$ and assume that $\mathcal{T}_{0,h}$ is a quasi uniform triangulation
of $\Omega_0$ with mesh parameter $h \in (0,h_0]$ for some $h_0>0$. We let $\mcT_h$ denote the active mesh $\mcT_h = \{ T \in \mcT_{h,0} : T \cap \Omega \neq \emptyset\}$.
We let $\mathcal{F}_h$ denote the set of interior faces in $\mathcal{T}_h$. 

\item We let $\mathcal{X}_h$ be the set of vertices in $\mathcal{T}_h$ 
and denote \bur{its} cardinality by $N_h$.

\item We define the space of piecewise linear discontinuous functions 
$W_h$ on $\mathcal{T}_h$  and the subspace of continuous piecewise linear 
functions $V_h:=W_h \cap C^0(\Omega_h)$, where $\Omega_h =\cup_{T \in \mcTh} T$.

\item We shall often use scalar products and norms defined on a set of mesh entities. For instance, let $\widetilde{\mcT}_h \subset \mcT_h$ be a subset of elements then 
\begin{equation}
(v,w)_{\widetilde{\mcT}_h} = \sum_{T\in \widetilde{\mcT}_h} (v,w)_T, 
\qquad \| v \|^2_{\widetilde{\mcT}_h} = \sum_{T\in \widetilde{\mcT}_h} \| v \|^2_T
\end{equation}

\end{itemize}

\subsection{Discrete Extension}

It is well known \cite{Ste70}, Theorem 5, page 181, \bur{that for domains
with sufficiently smooth boundary}, there exists a universal stable extension
operator $E:H^s(\Omega) \mapsto H^s(\mathbb{R}^d)$, \bur{$s \in \mathbb{N}_+$},
\begin{equation}\label{eq:ext-cont-stab}
\|E u\|_{H^s(\mathbb{R}^d)} \lesssim  \| u\|_{H^s(\Omega)}, 
\end{equation}
We will now  construct a stable discrete extension operator. The construction is based on 
polynomial extension into the discontinuous finite element space $W_h$ and then application of an average operator to obtain a continuous piecewise linear function in $V_h$. We first 
recall such an average operator $A_h$. 

\paragraph{Average Operator.}
Let the nodal averaging operator $A_h:W_h \mapsto V_h$ be defined by
\begin{equation}\label{eq:averege-operator}
A_h: W_h \ni w \mapsto \sum_{x \in \mcX_h}  \langle w \rangle_{x}  \varphi_x  \in V_h
\end{equation}
where the average of the discontinuous function $w\in W_h$ at a node $x\in \mcX_h$ 
is defined by 
\begin{equation}\label{eq:average-nodal}
\langle w \rangle_x 
= 
 \sum_{T \in \mcT_h(x)}  \kappa_{T,x} w|_T (x)
\end{equation}
where the weights $\kappa_{T,x}$ satisfy
\begin{equation}
\kappa_{T,x}\geq 0, \qquad \sum_{T \in \mcT_h(x)}  \kappa_{T,x} = 1
\end{equation}
and $\mcT_{h}(x) = \{ T \in \mcT_h : x \in T\}$ with cardinality  $|\mcT_{h}(x)|$. 
We have the following estimate see \cite{BurErn07},
\begin{equation}\label{eq:Ah-error-est}
\|w - A_h w \|_{\mcTh}  \lesssim h^{1/2} \| [ w ] \|_{\mcFh} 
\end{equation}
For completeness we include a brief derivation.
\begin{proof}[Proof of (\ref{eq:Ah-error-est})] Letting $w_T = w|_T$ and using an 
inverse estimate to pass from the elements to the nodes we obtain
\begin{align}
&\| w - A_h w \|^2_{\mcTh} = \sum_{T\in \mcTh}  \| w_T - A_h w \|^2_T
\lesssim
\sum_{T\in \mcTh} h^d \| w_T - A_h w \|^2_{\mcX_h(T)} 
\\
&\qquad
\lesssim 
\sum_{T\in \mcTh} \sum_{x \in \mcX_h(T)} h^d |w_T(x) - \langle w \rangle_x |^2
\lesssim
\sum_{T\in \mcTh} \sum_{x \in \mcX_h(T)} \sum_{F \in \mcF_h(x)} 
h \|[w]\|^2_F
\lesssim h   \|[w]\|^2_{\mcFh}
\end{align}
where $\mcX_h(T)$ is the nodes associated with $T$, $\mcF_h(x)$ the 
faces belonging to node $x\in \mcX_h$, and we finally used the inverse estimate
\begin{equation}\label{eq:Ah-inverse}
\sum_{T \in \mcT_h(x)} |w_T(x) - \langle w \rangle_x |^2 
\lesssim 
\sum_{F \in \mcF_h(x)} 
h \|[w]\|^2_F
\end{equation}
To establish (\ref{eq:Ah-inverse}) we note, using the fact that the weights in the average sum to one, that
\begin{equation}\label{eq:Ah-a}
\sum_{T \in \mcT_h(x)} |w_T(x) - \langle w \rangle_x |^2 
=
\sum_{T,S \in \mcT_h(x)} \kappa^2_{S,x}| w_S(x)  - w_T(x) |^2 
\lesssim
\sum_{F \in \mcF_h(x)} | [ w(x) ] |^2
\end{equation}
We complete the argument using the inverse estimate $|v(x)|^2 \lesssim h^{d-1} \| v \|^2_F$ 
with $v = [w]$.
\end{proof}

\paragraph{Extension Operator.} To define the extension operator  we split $\mcTh$ as follows
\begin{equation}
\mcTh = \mcT_{h,B} \cup \mcT_{h,I}
\end{equation}
where $\mcT_{h,I}$ is the set of elements in the interior of $\Omega$ (or with sufficiently large intersection with $\Omega$ see Remark \ref{rem:eh}) and 
$\mcT_{h,B}$ are the elements that intersect the boundary, 
\begin{equation}
\mcT_{h,I} = \{ T \in \mcT_h : T \subset \Omega\},\qquad \mcT_{h,B } = \mcTh \setminus \mcT_{h,I}
\end{equation}
Let $W_{h,I} = W_h |_{\mcT_{h,I}}$ and $V_{h,I} = V_h |_{\mcT_{h,I}}$. We construct
an extension operator $F_h : W_{h,I} \rightarrow F_h W_{h,I} \subset W_h$ by using 
canonical polynomial extensions from a nearest neighbouring element $T \in \mcT_{h,I}$. Restricting $F_h$ to $V_{h,I}$ and composing with the average operator 
$A_h$ we obtain a discrete extension operator $E_h : V_{h,I} \rightarrow E_h V_{h,I} 
\subset V_h$. \bur{The space $E_h V_{h,I}$ will be our approximation
  space and we will use the notation 
\begin{equation}
V_h^E = E_h V_{h,I}
\end{equation}  
Observe that $V_h^E$ is a proper subspace of $V_h$, however as we shall see under mild 
assumptions on the mesh geometry it has similar approximation properties.}

\bur{To make things precise}, let $S_h: \mcT_{h,B}\rightarrow \mcT_{h,I}$ be a mapping that associates 
an element $T \in \mcT_{h,I}$ with each element $T\in \mcT_{h,B}$ and assume that 
there is a constant such that for all  $h \in (0,h_0]$ and $T \in \mcT_{h,B}$, 
\begin{equation}\label{eq:assumption}
\text{diam}(T\cup S_h(T)) \lesssim h
\end{equation}
For $h_0$ small enough there is such a mapping $S_h$, see Lemma \ref{lem:Nh-map} below. 
We extend $S_h$ from $\mcT_{h,B}$ to $\mcT_h$ by letting $S_h(T) = T$ for $T\in \mcT_{h,I}$.

For $v \in \mathbb{P}_1(T)$ we let $v^e \in \mathbb{P}_1(\IR^d)$ denote the canonical extension such that $v^e|_T = v$. We can then define the discrete extension operator 
$F_h : W_{h,I} \rightarrow W_h$ as 
follows
\begin{align}\label{eq:Fh}
(F_h v )|_T = (v|_{S_h(T)})^e|_T 
\end{align}
and then define the discrete extension operator $E_h : V_{h,I} \rightarrow V_h$,
\begin{equation}
E_h = A_h \circ F_h
\end{equation}

\begin{rem}\label{rem:eh} In practice, we can define the set of elements that have a large intersection 
with the domain as follows,
\begin{equation}
\mcT_{h,\text{large}} = \{ T \in \mcTh : |T \cap \Omega| \geq c h^d \}
\end{equation}
for some positive constant $c$. Then for small enough $c$ we have 
$\mcT_{h,I} \subset \mcT_{h,large}$ and we can define the mapping 
$S_h : \mcT_h \setminus \mcT_{h,large} \rightarrow \mcT_{h,large}$. This approach has the 
advantage that fewer elements are mapped resulting in a simpler map $F_h$.
\end{rem}

\begin{rem} The construction of the extension operator and the forthcoming theory directly extends to higher order polynomials.
\end{rem}

We will now prove that the extension is stable and that the associated interpolation operator has optimal approximation properties.

\begin{lem} \label{lem:Nh-map}
For $h_0$ small enough there is a mapping $S_h:\mcTh \rightarrow \mcT_{h,I}$ that satisfies (\ref{eq:assumption}).
\end{lem}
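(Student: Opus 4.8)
The plan is to use the smoothness of $\Gamma$ to push each boundary element a fixed multiple of $h$ along the inward normal, landing inside an element that is provably interior and provably nearby. Since $\Omega$ is bounded with smooth boundary, $\Gamma$ is compact and admits a tubular neighbourhood of some width $\delta_0>0$: on $U_{\delta_0}=\{x:\text{dist}(x,\Gamma)<\delta_0\}$ the signed distance $\rho$ (taken negative inside $\Omega$) is smooth, satisfies $\rho(y+t\,n(y))=t$ for $y\in\Gamma$ and $|t|<\delta_0$ with $n$ the outward unit normal, and is globally $1$-Lipschitz. From quasi uniformity there is a constant $c_1$, independent of $h\in(0,h_0]$, with $\text{diam}(T)\le c_1 h$ for every $T\in\mcT_{h,0}$.

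First I would isolate the one geometric fact that drives everything: any element $S\in\mcT_h$ containing a point $q$ with $\rho(q)<-c_1 h$ lies entirely in $\Omega$, since every $z\in S$ obeys $|z-q|\le\text{diam}(S)\le c_1 h$ and hence $\rho(z)\le\rho(q)+|z-q|<0$; such an $S$ therefore belongs to $\mcT_{h,I}$. Next, for a fixed $T\in\mcT_{h,B}$, I would note that $T$ is closed, convex, meets $\Omega$, and is not contained in $\Omega$, so it straddles $\Gamma$ and I may pick $x_T\in T\cap\Gamma$. Setting $M:=c_1+1$ and $q_T:=x_T-Mh\,n(x_T)$ and choosing $h_0\le\delta_0/M$, I get $q_T\in U_{\delta_0}$ with $\rho(q_T)=-Mh<-c_1 h$, so $q_T\in\Omega\subset\Omega_0$ and there is an element of $\mcT_{h,0}$ containing it; call it $S_h(T)$. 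It is active (it contains $q_T\in\Omega$) and, by the fact above, $S_h(T)\in\mcT_{h,I}$. Finally, since $x_T\in T$, $q_T\in S_h(T)$, and $|x_T-q_T|=Mh$,
\[
\text{diam}(T\cup S_h(T))\le\text{diam}(T)+|x_T-q_T|+\text{diam}(S_h(T))\le(2c_1+M)\,h\lesssim h,
\]
which is precisely (\ref{eq:assumption}); extending $S_h$ by the identity on $\mcT_{h,I}$ finishes the construction.

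The hard part is not any single calculation but the geometric bookkeeping near $\Gamma$: the inward pushed point $q_T$ must simultaneously remain in the tube where $\rho$ and $n$ are meaningful, which forces $h_0\le\delta_0/M$, and land deep enough that its containing element is certifiably interior, which forces $M>c_1$. These are compatible with a single, $h$-independent choice because $\delta_0$ depends only on $\Gamma$ while $c_1$ depends only on the shape regularity of the mesh family. The remaining points to check are routine: that each $T\in\mcT_{h,B}$ actually meets $\Gamma$ (from closedness and convexity of $T$ together with it straddling the boundary) and that any ambiguity in the choice of element when $q_T$ falls on an interelement face is harmless, since any containing element may be selected.
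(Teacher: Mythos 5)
Your proof is correct, but it takes a genuinely different and more elementary route than the paper's. The paper works at a point $x \in T \cap \Gamma$ with the tangent plane $T_x(\Gamma)$: it forms the region $O_\delta(x)$ consisting of the slab between the inward offsets $c\delta^2$ and $\delta$ of the tangent plane, intersected with a cylinder of radius $\delta$ about the normal, shows $O_\delta(x) \subset \Omega$ using the second-order contact of the smooth boundary with its tangent plane, and then takes $\delta \sim Ch$ with $C$ large enough that a \emph{whole} element of the mesh fits inside $O_\delta(x)$; that element is interior and within distance $\lesssim h$ of $T$. You instead push a single point $x_T \in T\cap\Gamma$ inward along the normal by $(c_1+1)h$ and certify that the element \emph{containing} the pushed point $q_T$ is interior, using only the $1$-Lipschitz property of the signed distance together with $\rho(q_T) = -(c_1+1)h$, which the tubular neighbourhood guarantees once $h_0 \lesssim \delta_0$. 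This replaces the paper's curvature bookkeeping (the $c\delta^2$ offset that excises the region where $\Gamma$ may dip below its tangent plane) and its ``an element fits in the slab'' step by the trivial observation that $q_T$ lies in \emph{some} element, whose diameter is at most $c_1 h$; quasi-uniformity enters your argument only through that diameter bound, whereas the paper also needs it to guarantee an element inside a ball of radius $\sim Ch$. Both arguments ultimately rest on the same geometric input, the positive reach of the compact smooth boundary, and both yield $h$-independent constants; yours is shorter and the constant $(3c_1+1)$ is explicit, while the paper's construction additionally locates the target element in a prescribed position along the normal direction, information your argument does not provide (but which the lemma does not require). You also correctly handle the two boundary cases that could trip this argument up: that every $T \in \mcT_{h,B}$ actually meets $\Gamma$ (by convexity and connectedness of $T$, which meets both $\Omega$ and its complement), and that $q_T$ may lie on an interelement face, where any containing element serves.
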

\begin{proof} Note first that there is $\delta_0>0$ such that the closest point mapping 
$p : U_\delta(\Gamma) \rightarrow \Gamma$ is well defined for $\delta \in (0,\delta_0]$. 
For $T \in \mcT_{h,B}$ take $x \in T \cap \Gamma$ and let $T_x(\Gamma)$ be the tangent 
plane to $\Gamma$ at $x$ with exterior unit normal $n_x$. Let $\rho_{T_x(\Gamma)}$ be a signed distance function associated with $T_x(\Gamma)$ such that $\nabla \rho_{T_x(\Gamma)} = - n _x$ and define 
the one sided tubular neighborhood  $U^+_\delta(T_x(\Gamma)) = \{ y \in \IR^d : 0< \rho_{T_x(\Gamma)} < \delta\}$. Then we note that there is a fixed $\delta_1$ such that 
for all $\delta \in (0,\delta_1]$, 
\begin{equation}
O_\delta(x) = (U^+_{\delta}(T_x(\Gamma)) \setminus U^+_{c \delta^2} (T_x(\Gamma)) ) \cap \Cyl_\delta(x,n_x) \subset 
U_{\delta_0} \cap \Omega 
\end{equation}
where $\text{Cyl}_\delta(x,n_x)$ is the cylinder with radius $\delta$ and center axis aligned with 
the normal $n_x$ at $x \in \Gamma$.
Taking $\delta$ such that $c h \leq \delta \leq C h$, with $c$ and $C$ sufficiently large constants, we conclude that there is an element $T \in \mcT_{h,I}$ such that $T \subset O_\delta(x)$ for $h \in (0,h_0]$, with $h_0$ small enough to guarantee that $O_\delta(x) \subset U_{\delta_0}(\Gamma)$ for $\delta = C h_0$.
\end{proof}

\begin{lem} There are constants such that for all $v \in V_{h,I}$,
\begin{equation}\label{eq:Fh-stab-L2}
\| F_h v \|_{\mcTh} \lesssim \| v \|_{\mcT_{h,I}} 
\end{equation}
\begin{align}\label{eq:Fh-stab-energy}
\bur{\| \nabla F_h v \|_{\mcTh} 
+ h^{-1} \|[ F_h v ] \|_{\mcFh}
\lesssim \| \nabla v \|_{\mcT_{h,I}} }
\end{align}
\end{lem}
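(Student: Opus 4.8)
The plan is to prove all three bounds elementwise on $\mcTh$, transferring from each $T$ to its source $S_h(T)\in\mcT_{h,I}$ and then summing. The only global ingredient needed is that the source map has bounded multiplicity: by the diameter bound \eqref{eq:assumption} together with the quasi-uniformity of the mesh, each $S\in\mcT_{h,I}$ is the image $S_h(T)$ of at most $\mathcal{O}(1)$ elements $T\in\mcTh$, so that $\sum_{T\in\mcTh}\|\cdot\|^2_{S_h(T)}\lesssim\|\cdot\|^2_{\mcT_{h,I}}$. For the $L^2$ bound \eqref{eq:Fh-stab-L2} I would fix $T$, set $S=S_h(T)$, and write $(F_hv)|_T=(v|_S)^e$. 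Since $(v|_S)^e$ is affine and $T\cup S$ has diameter $\lesssim h$, a norm equivalence for affine functions on shape-regular simplices gives $\|(v|_S)^e\|_T\lesssim\|v\|_S$: expand $(v|_S)^e$ about a point of $S$, bound its value and its constant gradient by $\|v\|_S$ via inverse estimates, and control the extrapolation over the distance $\lesssim h$. Summing over $T$ and invoking bounded multiplicity yields \eqref{eq:Fh-stab-L2}.

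The gradient term in \eqref{eq:Fh-stab-energy} follows from the same structure, and is in fact immediate: the canonical extension is affine, so $\nabla(F_hv)|_T=\nabla v|_{S_h(T)}$ is exactly the constant gradient of $v$ on the source element. Hence $\|\nabla F_hv\|^2_T=|\nabla v|_{S_h(T)}|^2\,|T|\simeq\|\nabla v\|^2_{S_h(T)}$ because $|T|\simeq|S_h(T)|\simeq h^d$, and summing with bounded multiplicity gives $\|\nabla F_hv\|_{\mcTh}\lesssim\|\nabla v\|_{\mcT_{h,I}}$.

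The jump term is the crux. On an interior face shared by two elements of $\mcT_{h,I}$ one has $F_hv=v$, which is continuous, so the jump vanishes there and the sum over $\mcFh$ reduces to faces meeting $\mcT_{h,B}$. On such a face $F=\partial T_1\cap\partial T_2$ the jump is a difference of two affine extensions, $[F_hv]=(v|_{S_1})^e-(v|_{S_2})^e$ with $S_i=S_h(T_i)$, and the idea is to exploit the continuity of $v$ on $\mcT_{h,I}$. Connecting $S_1$ to $S_2$ by a chain of interior elements sharing faces $G$, across each $G$ the two one-sided affine pieces of $v$ agree on $G$, hence on its whole hyperplane; the corresponding extensions therefore differ only by the normal-gradient jump $[\partial_n v]_G$ multiplied by the signed distance to that hyperplane, which is $\lesssim h$ on $F$. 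This represents $[F_hv]|_F$ as $\lesssim h$ times a sum of gradient jumps $[\partial_n v]_G$ over the chain, and a discrete trace inequality $\|[\partial_n v]\|^2_G\lesssim h^{-1}\|\nabla v\|^2_\omega$, with $\omega$ the pair of elements sharing $G$, then bounds each term by element gradients of $v$. Summing over $F$ with bounded multiplicity produces the control by $\|\nabla v\|_{\mcT_{h,I}}$.

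I expect two difficulties here. First, the chain argument requires that any two source elements within distance $\lesssim h$ be joined by an $\mathcal{O}(1)$-length path of interior elements; this is a local connectivity property of $\mcT_{h,I}$ near $\Gamma$ that must be extracted from \eqref{eq:assumption} and the tubular-neighbourhood geometry of Lemma \ref{lem:Nh-map}, or else built directly into the nearest-neighbour choice of $S_h$ so that $S_1$ and $S_2$ coincide or are adjacent. Second, and more delicate, is the exact power of $h$: tracking the face measure $|F|\simeq h^{d-1}$, the pointwise size $\lesssim h\,|[\partial_n v]_G|$ of the jump, and the trace inequality above, the natural chain estimate delivers $h^{-1/2}\|[F_hv]\|_{\mcFh}\lesssim\|\nabla v\|_{\mcT_{h,I}}$. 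Matching the stated weight $h^{-1}$ is the sharp point of the argument, and this is where I would concentrate the careful bookkeeping and verify that no gradient mass concentrated in the $\mathcal{O}(h)$-wide boundary strip is lost.
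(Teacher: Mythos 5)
Your treatment of \eqref{eq:Fh-stab-L2} and of the gradient part of \eqref{eq:Fh-stab-energy} is essentially the paper's argument: an extrapolation/inverse estimate for affine functions on a ball $B_\delta\supset T\cup S_h(T)$ of diameter $\sim h$ (the paper writes $\|v^e\|_T\leq \|v^e\|_{B_\delta}\lesssim \|v\|_{S_h(T)}$), followed by summation using the bounded multiplicity of $S_h$, which the paper likewise deduces from \eqref{eq:assumption} and shape regularity. For the jump term you take a genuinely different route: you telescope $[F_h v]$ along a face-connected chain of interior elements, expressing the difference of two affine extensions as a sum of normal-gradient jumps multiplied by signed distances $\lesssim h$. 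The paper instead subtracts an arbitrary constant $w_F$ on each face, passes from $F$ to the neighbouring elements $\mcT_h(F)$ by an inverse inequality, transports to $S_h(\mcT_h(F))\subset B_\delta$ using the definition of $F_h$, and concludes with the scaled Poincar\'e inequality $\inf_{w_F}\|v-w_F\|_{\mcT_{h,I}(B_\delta)}\lesssim \delta\,\|\nabla v\|_{\mcT_{h,I}(B_\delta)}$. The two arguments have the same substance, and your first flagged difficulty is not actually avoided by the paper: the uniformity of that local Poincar\'e constant on $\mcT_{h,I}(B_\delta)$ requires exactly the local connectivity of the interior elements near $\Gamma$ that your chain needs, and in both cases it must be extracted from the smoothness of $\Gamma$ as in the geometry of Lemma \ref{lem:Nh-map}. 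The constant-subtraction route merely packages this hypothesis more compactly; your chain version makes it explicit.

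Your second worry resolves in your favor, and you should not spend effort trying to reach the weight $h^{-1}$: as displayed, it is a typo in the statement. The paper's own proof estimates the quantity $h^{-1}\|[F_h v]\|^2_{\mcFh}$, i.e., it proves $h^{-1/2}\|[F_h v]\|_{\mcFh}\lesssim \|\nabla v\|_{\mcT_{h,I}}$, and this is precisely the form invoked later (proof of Lemma \ref{lem:stab_ext}, case $m=1$). The unsquared $h^{-1}$ version is in fact false: take $v\in V_{h,I}$ a nodal hat function with an $O(1)$ gradient jump across an interior face $G$, and a boundary face $F$ at distance $\sim h$ whose two neighbours are mapped by $S_h$ to opposite sides of $G$; then $\|[F_h v]\|^2_F\sim h^{d+1}$, so $h^{-2}\|[F_h v]\|^2_F\sim h^{d-1}$, while $\|\nabla v\|^2_{\mcT_{h,I}}\sim h^{d}$, so the $h^{-1}$-weighted bound degenerates as $h\to 0$. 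Your chain computation, which delivers $h^{-1}\|[F_h v]\|^2_{\mcFh}\lesssim \|\nabla v\|^2_{\mcT_{h,I}}$, therefore proves the correct and intended estimate; modulo the connectivity lemma discussed above, your proposal is a valid proof.
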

\begin{proof} To prove (\ref{eq:Fh-stab-L2}) we note that for each $T\in \mcT_{h,B}$ we 
have the inverse inequality  
\begin{equation}\label{eq:Fh-stab-a}
\| v^e \|_T \leq \| v^e \|_{B_\delta} \lesssim \| v \|_{S_h(T)}
\end{equation}
where $B_\delta$ is a ball with diameter $\delta \sim h$ such that $T
\cup S_h(T) \subset B_\delta$.  Summing over $T \in \mcT_{h,B}$ \mgl{
  and noting that thanks to \eqref{eq:assumption}, the number of elements in $\mcT_{h,B}$ that $S_h$ 
  maps to $T$ is uniformly bounded over all $T\in \text{Im}(S_h)$,   
 \begin{equation}
 \sum_{T \in \mcT_{h,B}} \| v \|^2_T \lesssim  \sum_{T \in \mcT_{h,B}}  \| v \|^2_{S_h(T)} \lesssim 
  \sum_{T \in \text{Im}(S_h) } \| v \|^2_T 
  \lesssim \| v \|^2_{\mcT_{h,I}} 
 \end{equation}
where for the last inequality we used the inclusion $\text{Im}(S_h) \subset \mcT_{h,I}$.
}
For (\ref{eq:Fh-stab-energy}), we obtain using the same argument 
\begin{equation}
\| \nabla F_h v \|_{\mcTh} 
\lesssim \| \nabla v \|_{\mcT_{h,I}}
\end{equation}
To estimate the remaining term 
\begin{equation}
h^{-1} \|[ F_h v ] \|^2_{\mcFh} 
= \sum_{F \in \mcFh} h^{-1} \|[ F_h v ] \|_F^2
\end{equation}
we have for each $F\in \mcF_h$, $[v] = [v- w_F]$ for an arbitrary constant $w_F$. Using 
the triangle inequality followed by an inverse inequality to pass from the face $F$ to the elements $\mcT_h(F)$ sharing $F$,
\begin{equation}
h^{-1} \|[ F_h v ] \|^2_F 
\lesssim h^{-2} \|F_h v - w_F \|^2_{\mcT_h(F)} 
\lesssim  h^{-2} \|v - w_F \|^2_{S_h(\mcT_h(F))} 
\end{equation}
Next there is an open ball $B_\delta$ with diameter $\delta \sim h$ such that 
\begin{align}
S_h (\mcT_h(F)) \subset B_\delta 
\end{align}
and then we have 
\begin{equation}
h^{-2} \bur{\inf_{w_F \in \mathbb{R}}} \|v - w_F \|^2_{S_h(\mcT_h(F))}  
\leq 
h^{-2} \bur{\inf_{w_F \in \mathbb{R}}} \|v - w_F \|^2_{\mcT_{h,I} (B_\delta)} 
\lesssim 
\delta^2 h^{-2} \| \nabla v \|^2_{\mcT_{h,I} (B_\delta)} 
\lesssim 
\| \nabla v \|^2_{\mcT_{h,I} (B_\delta)} 
\end{equation}
which concludes the proof. 
\end{proof}

\bur{A key property of CutFEM stabilized using ghost penalty is that
  the weakly consistent penalty term allows for control of the finite
  element solution on the whole mesh domain, by the combination of the
  stability from coercivity on the physical domain and the penalty
  terms. We will now show that such a stability property holds by
  construction for the extended space, thereby eliminating the need for
  additional stabilization.}

\begin{lem}\label{lem:stab_ext}(Stability of the extension)
There are constants such that for all $v \in V_{h,I}$,
\begin{equation}\label{eq:Eh-stab}
\|\nabla^m E_h v_h\|_{\mathcal{T}_h} \lesssim  \|\nabla^m v_h\|_{\mcT_{h,I}}, \qquad m=0,1
\end{equation}
\end{lem}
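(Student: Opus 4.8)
The plan is to exploit the factorization $E_h = A_h \circ F_h$ together with the two stability estimates already proved for $F_h$ and the consistency bound (\ref{eq:Ah-error-est}) for the nodal averaging operator $A_h$. The guiding idea is that $A_h$ acts almost as the identity on $F_h v$: the discrepancy $F_h v - A_h F_h v$ is controlled by the jumps of $F_h v$ across faces, and the energy stability of $F_h$ shows that these jumps are of order $h$. Accordingly, for $m = 0, 1$ I would write
\[
\nabla^m E_h v = \nabla^m F_h v - \nabla^m (F_h v - A_h F_h v),
\]
and estimate the two contributions separately by the triangle inequality.

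The first term is immediate: for $m = 0$ it is bounded by $\|v\|_{\mcT_{h,I}}$ via (\ref{eq:Fh-stab-L2}), and for $m = 1$ by $\|\nabla v\|_{\mcT_{h,I}}$ via (\ref{eq:Fh-stab-energy}). For the correction term the starting point is (\ref{eq:Ah-error-est}) applied to $w = F_h v$, which gives $\|F_h v - A_h F_h v\|_{\mcT_h} \lesssim h^{1/2}\|[F_h v]\|_{\mcF_h}$. I would then absorb the jump using the second half of the energy bound (\ref{eq:Fh-stab-energy}), rewritten as $\|[F_h v]\|_{\mcF_h} \lesssim h\,\|\nabla v\|_{\mcT_{h,I}}$. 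For $m = 0$ this yields $h^{1/2}\|[F_h v]\|_{\mcF_h} \lesssim h^{3/2}\|\nabla v\|_{\mcT_{h,I}}$, which after the inverse inequality $\|\nabla v\|_{\mcT_{h,I}} \lesssim h^{-1}\|v\|_{\mcT_{h,I}}$ is bounded by $h^{1/2}\|v\|_{\mcT_{h,I}} \lesssim \|v\|_{\mcT_{h,I}}$; alternatively one may bound the face jump directly by $\|F_h v\|_{\mcT_h}$ through a trace inequality and then invoke (\ref{eq:Fh-stab-L2}).

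For $m = 1$ the correction carries a gradient, so I would first apply an inverse inequality on $\mcT_h$ (legitimate since $F_h v - A_h F_h v$ is piecewise polynomial) to reduce it to the $L^2$ norm, $\|\nabla (F_h v - A_h F_h v)\|_{\mcT_h} \lesssim h^{-1}\|F_h v - A_h F_h v\|_{\mcT_h}$, and then chain the two estimates of the previous paragraph. This produces a factor $h^{-1}\cdot h^{1/2}\cdot h = h^{1/2}$ multiplying $\|\nabla v\|_{\mcT_{h,I}}$, which is harmless since $h \le h_0$. The one point that needs care is the bookkeeping of the powers of $h$: the inverse inequality contributes $h^{-1}$, the averaging estimate contributes $h^{1/2}$, and the jump control contributes $h$, and one must verify that these combine to a nonnegative power so that the hidden constant remains uniform in $h$. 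Beyond this routine tracking of scalings I expect no genuine obstacle, since all of the analytic substance is already carried by the two preceding lemmas.
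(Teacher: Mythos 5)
Your proposal is correct and takes essentially the same route as the paper: the same splitting $E_h v = F_h v + (A_h - I)F_h v$ with the triangle inequality, the averaging estimate (\ref{eq:Ah-error-est}) for the correction, the stability bounds (\ref{eq:Fh-stab-L2}) and (\ref{eq:Fh-stab-energy}) for $F_h$, and for $m=1$ the elementwise inverse inequality followed by the jump control, exactly as in the paper's proof of Lemma \ref{lem:stab_ext}. The only cosmetic difference is at $m=0$, where your primary chain $h^{1/2}\|[F_h v]\|_{\mcFh} \lesssim h^{3/2}\|\nabla v\|_{\mcT_{h,I}} \lesssim h^{1/2}\|v\|_{\mcT_{h,I}}$ detours through the gradient and an inverse estimate, while the paper uses precisely the alternative you mention, bounding $h^{1/2}\|[F_h v]\|_{\mcFh} \lesssim \|F_h v\|_{\mcTh}$ by a trace inverse inequality and then invoking (\ref{eq:Fh-stab-L2}).
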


\begin{proof}
For $m=0$, we add and subtract $F_h v$ and use (\ref{eq:Fh-stab-L2}) and 
(\ref{eq:Ah-error-est}) to conclude that
\begin{align}
\| E_h v \|_{\mcTh} &= \| A_h F_h v \|_{\mcTh}
\\
&\leq \| F_h v \|_{\mcTh} + \| (I-A_h) F_h v \|_{\mcTh}
\\
&\lesssim \| F_h v \|_{\mcTh} + h^{1/2} \| [ F_h v ] \|_{\mcFh}
\\
&\lesssim \| F_h v \|_{\mcTh} +  \| F_h v  \|_{\mcTh}
\\
&\lesssim \| v \|_{\mcT_{h,I}}
\end{align}
For $m=1$, we proceed in the same way but we instead employ the stronger 
stability (\ref{eq:Fh-stab-energy}) of the operator $F_h$,
\begin{align}
\| \nabla E_h v \|_{\mcTh} &= \| \nabla A_h F_h v \|_{\mcTh}
\\
&\leq \| \nabla F_h v \|_{\mcTh} + \| \nabla (I - A_h)  F_h v) \|_{\mcTh}
\\
&\leq \| \nabla F_h v \|_{\mcTh} + h^{-1} \| (I -  A_h) F_h v \|_{\mcTh}
\\
&\lesssim \| \nabla F_h v \|_{\mcTh} + h^{-1/2} \| [ F_h v ] \|_{\mcFh}
\\
&\lesssim \| \nabla v \|_{\mcT_{h,I}}
\end{align}
and thus the proof is complete.
\end{proof}

\subsection{Interpolation}

We begin by defining some interpolation operators that will be needed in the analysis.
\begin{itemize}
\item Let $\pi_h:H^1(\Omega_h) \rightarrow V_h$ be an interpolation operator of average 
type, see \cite{Cle75} or \cite{ScoZha90}, that 
satisfies the standard element wise estimate 
\begin{equation}\label{eq:interpol-local}
\| v - \pi_h v \|_{H^m(T)} \lesssim h^{2-m} \| v \|_{H^2(\mcT_h(T))}, \qquad m=0,1
\end{equation} 
with $\mcT_h(T)\subset \mcTh$ the neighboring elements of $T$. Composing $\pi_h$ 
with the continuous extension operator $E$ we obtain an interpolation operator 
$\pi_h \circ E : H^1(\Omega) \rightarrow V_h$ and using the 
stability (\ref{eq:ext-cont-stab}) of the continuous extension operator we have 
\begin{equation}\label{eq:interpol}
\| E v - \pi_h E v \|_{\mcTh} \lesssim h^{2-m} \| v \|_{H^2(\Omega_h)}
\lesssim 
h^{2-m} \| v \|_{H^2(\Omega)}, \qquad m=0,1
\end{equation}
For simplicity we use the notation $E v   = v$ and $\pi_h v = \pi_h E v$ when appropriate.

\item We shall also 
need an interpolation operator $P_h : L^2(\Omega) \rightarrow F_h W_{h,I}$, which we 
define by noting that the sets $S_h^{-1}(T)$ for $T \in \mcT_{h,I}$ provides a partition 
of $\mcT_h$. Then there is $\delta \sim h$ and a ball $B_{\delta,T}$ such that 
\begin{equation}
S_h^{-1}(T) \subset B_{\delta,T}
\end{equation}
On each ball $B_{\delta,T}$ there is $P_{h,T} v \in \mathbb{P}_1( B_{\delta,T} )$ such that 
\begin{equation}
\|\nabla^m ( v - P_{h,T} v)\|_{ B_{\delta,T}} \lesssim h^{2-m} \| v \|_{H^2(B_{\delta,T})}, \qquad m=0,1
\end{equation}
Defining $P_h:L^2(\mcTh) \rightarrow W_h$ by 
\begin{equation}
(P_h v)|_{S_h^{-1}(T)} =(P_{h,T} E v)|_{S_h^{-1}(T)} 
\end{equation}
we obtain the global error estimate
\begin{equation}\label{eq:Ph-error-est}
\|\nabla^m ( v - P_{h} v)\|_{\mcTh} \lesssim h^{2-m} \| v \|_{H^2(\Omega)},
\qquad m=0,1
\end{equation}
\bur{Observe also that $P_h$ satisfies $P_h v = F_h (P_h v)_I$, where we introduced the shorthand notation $(v)_I := v \vert_{\mcT_{h,I}}$}.
\item We define the interpolation operator $I_h: H^1(\Omega) \rightarrow 
V_h^E$ by \bur{ $I_h u := E_h
(\pi_h E u)_I$}.
\end{itemize}
\begin{lem} There is a constant such that for all $v \in H^2(\Omega)$,
\begin{equation}\label{eq:Ih-errorest}
\|E v - I_h v\|_{\mcTh} + h \|\nabla (E v -
I_h v)\|_{\mcTh} \lesssim h^2 |u|_{H^2(\Omega)}
\end{equation}
\end{lem}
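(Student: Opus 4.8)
The plan is to prove both terms of \eqref{eq:Ih-errorest} by a triangle inequality that inserts the interpolant $w := \pi_h E v \in V_h$, and then to reduce the remaining purely discrete quantity to the already-established approximation properties of $P_h$. Writing $E v - I_h v = (E v - w) + (w - I_h v)$, the first contribution is controlled in both the $L^2$ and $H^1$ norms directly by the interpolation bound \eqref{eq:interpol}, giving $\|E v - w\|_{\mcTh} + h\|\nabla(E v - w)\|_{\mcTh}\lesssim h^2\|v\|_{H^2(\Omega)}$. It therefore remains to estimate the finite element remainder $w - I_h v = w - E_h (w)_I = w - A_h F_h (w)_I$.

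For the $L^2$ estimate of $w - I_h v$ I would use two facts about the average operator: it reproduces continuous functions, $A_h w = w$ for $w\in V_h$ (since the weights $\kappa_{T,x}$ sum to one), and it is $L^2$-stable, $\|A_h g\|_{\mcTh}\lesssim \|g\|_{\mcTh}$, which follows from \eqref{eq:Ah-error-est} together with the trace inequality $h^{1/2}\|[g]\|_{\mcFh}\lesssim \|g\|_{\mcTh}$. Then $w - I_h v = A_h(w - F_h (w)_I)$ and $\|w - I_h v\|_{\mcTh}\lesssim \|w - F_h(w)_I\|_{\mcTh}$. To bound the latter I insert $P_h v$ and use the factorization $P_h v = F_h (P_h v)_I$: writing $w - F_h(w)_I = (w - P_h v) + F_h\big((P_h v)_I - (w)_I\big)$ and invoking the $L^2$-stability \eqref{eq:Fh-stab-L2} of $F_h$ (whose proof only uses single-element inverse inequalities and hence remains valid on all of $W_{h,I}$, not merely $V_{h,I}$), I obtain $\|w - F_h(w)_I\|_{\mcTh}\lesssim \|w - P_h v\|_{\mcTh} + \|(w - P_h v)_I\|_{\mcT_{h,I}}\lesssim \|w - P_h v\|_{\mcTh}$. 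Finally $\|w - P_h v\|_{\mcTh}\leq \|E v - w\|_{\mcTh} + \|E v - P_h v\|_{\mcTh}\lesssim h^2\|v\|_{H^2(\Omega)}$ by \eqref{eq:interpol} and \eqref{eq:Ph-error-est}.

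For the gradient of $w - I_h v$ I would deliberately avoid the gradient stability of $E_h$ from Lemma \ref{lem:stab_ext}. Instead, since $w - I_h v \in V_h$ is a genuine finite element function on the quasi-uniform mesh, I apply a global inverse inequality, $\|\nabla(w - I_h v)\|_{\mcTh}\lesssim h^{-1}\|w - I_h v\|_{\mcTh}\lesssim h\|v\|_{H^2(\Omega)}$, using the $L^2$ bound just obtained. Combining with $h\|\nabla(E v - w)\|_{\mcTh}\lesssim h^2\|v\|_{H^2(\Omega)}$ yields the gradient part of \eqref{eq:Ih-errorest}.

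The main obstacle is a mismatch in regularity: the clean way to estimate $w - I_h v$ in $H^1$ would be the gradient stability of $E_h$, but that bound is only available for a \emph{continuous} argument in $V_{h,I}$, whereas the natural comparison object $(P_h v)_I$ is discontinuous across interior faces. The two devices that circumvent this are (i) deducing the $H^1$ bound for the discrete remainder from an inverse inequality applied to the $L^2$ bound, and (ii) using only the $L^2$-stability of $F_h$ together with the factorization $P_h v = F_h(P_h v)_I$, so that the discontinuity of $P_h v$ never has to be controlled in a gradient norm. A minor point is that the reference estimates \eqref{eq:interpol} and \eqref{eq:Ph-error-est} carry the full $H^2$ norm on the right, which may be replaced by the seminorm $|v|_{H^2(\Omega)}$ in the statement by polynomial reproduction of the underlying averaged interpolants.
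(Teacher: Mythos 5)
Your proposal is correct, and it deviates from the paper's proof in a worthwhile way. The paper splits $Ev - I_h v$ into three terms, $I = (I-\pi_h)Ev$, $II = \pi_h E v - F_h(\pi_h E v)_I$ and $III = (I-A_h)F_h(\pi_h E v)_I$, and estimates each in the broken $H^m$ norm for $m=0,1$ simultaneously: this requires the energy stability \eqref{eq:Fh-stab-energy} of $F_h$ for $m=1$, and for term $III$ the jump estimate \eqref{eq:Ah-error-est} combined with insertion of the continuous function $\pi_h E v$ into the jump and an inverse estimate back to elements. You instead use the two-term split with $w = \pi_h E v$, observe that $A_h$ reproduces $V_h$ (weights sum to one) so that $w - I_h v = A_h\bigl(w - F_h(w)_I\bigr)$, and exploit $L^2$-stability of $A_h$ (which indeed follows from \eqref{eq:Ah-error-est} plus the discrete trace inequality $h^{1/2}\|[g]\|_{\mcFh}\lesssim \|g\|_{\mcTh}$); the core reduction via $P_h v = F_h(P_h v)_I$, \eqref{eq:Fh-stab-L2}, \eqref{eq:interpol} and \eqref{eq:Ph-error-est} is then the same as the paper's treatment of term $II$. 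Your observation that \eqref{eq:Fh-stab-L2} holds on all of $W_{h,I}$ is correct and necessary — the paper itself tacitly applies $F_h$-stability to the discontinuous argument $(P_h v - \pi_h E v)_I$ — since the proof uses only elementwise inverse estimates and the finite-overlap property of $S_h$. Your gradient bound via the global inverse inequality on $w - I_h v \in V_h$ is legitimate on the quasi-uniform mesh and loses nothing, since $h\cdot h^{-1}\cdot h^2 = h^2$ matches the target; what it buys is the elimination of both \eqref{eq:Fh-stab-energy} and the term-$III$ jump manipulations from this lemma, at the cost of an argument that is tied to the inverse inequality (so it would not extend to situations where one wants the $H^1$ estimate without quasi-uniformity, whereas the paper's route gives the $m=1$ case directly from stability). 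Your closing remark on seminorm versus full norm is apt but glosses over the fact that the Stein extension bound \eqref{eq:ext-cont-stab} carries the full $\|v\|_{H^2(\Omega)}$; note the statement's right-hand side $|u|_{H^2(\Omega)}$ is in any case a typo in the paper ($u$ for $v$), and the paper's own proof also concludes with $\|v\|_{H^2(\Omega)}$.
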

\begin{proof} Adding and subtracting $\pi_h E v$ and $F_h (\pi_h E v )_I$ and using the 
triangle inequality 
\begin{align}
\| E v - I_h v \|_{H^m(\mcTh)} 
&= \| E v - E_h (\pi_h E v)_I \|_{H^m(\mcTh)}
\\
&= \| E v - \pi_h E v \|_{H^m(\mcTh)} 
+ \| \pi_h E v  - E_h (\pi_h E v)_I \|_{H^m(\mcTh)} 
\\
&= \| (I - \pi_h) E v  \|_{H^m(\mcTh)} 
+ \| \pi_h E v  - F_h (\pi_h E v)_I  \|_{H^m(\mcTh}  
\\
&\qquad 
+ \| (I-A_h) F_h (\pi_h E v)_I \|_{H^m(\mcTh)} 
\\
&=I + II + III
\end{align}
\paragraph{Term $\bfI$.} Using (\ref{eq:interpol}) we directly have
\begin{equation}
\| (I - \pi_h) E v  \|_{H^m(\mcTh)}  
\lesssim  h^{2-m} \| v \|_{H^2(\Omega)}
\end{equation}
\paragraph{Term $\bfI\bfI$.} Adding and and subtracting $P_h v$,
\bur{recalling} the identity $P_h v = F_h (P_h v)_I$, and using the triangle inequality 
we obtain
\begin{align}
& \| \pi_h E v  - F_h (\pi_h E v)_I  \|_{H^m(\mcTh)}   
\\
&\qquad  \leq
  \| \pi_h E v  -P_h v \|_{H^m(\mcTh)} +  \| P_h v  - F_h (\pi_h E v)_I \|_{H^m(\mcTh)}   
\\
&\qquad \leq 
  \| \pi_h E v  -P_h v  \|_{H^m(\mcTh)} +  \| F_h (P_h v  - \pi_h E v)_I  \|_{H^m(\mcTh)}    
\\
&\qquad \lesssim
  \|  \pi_h E v  -P_h v  \|_{H^m(\mcTh)} 
  \\
&\qquad \lesssim
 \|  \pi_h E v - v \|_{H^m(\mcTh)}  + \| v  -P_h v  \|_{H^m(\mcTh)} 
 \\
 &\qquad \lesssim
 h^{2-m}\|  v  \|_{H^2(\Omega)} 
\end{align}
where we used the stability estimates (\ref{eq:Fh-stab-L2}) for $m=0$ and (\ref{eq:Fh-stab-energy}) for $m=1$ for $F_h$, added and subtracted $v$ and used the triangle inequality, 
and used the interpolation error estimate (\ref{eq:interpol}) and (\ref{eq:Ph-error-est}).

\paragraph{Term $\bfI\bfI\bfI$.} Using the approximation result (\ref{eq:Ah-error-est}) for 
the average operator $A_h$, inserting the continuous function $\pi_h E v$ into the jump, and
using an inverse estimate to pass from faces to elements  we obtain
\begin{align}
 \| (I-A_h) F_h (\pi_h E v)_I \|_{H^m(\mcTh)} &\lesssim h^{-m} \| (I-A_h) F_h (\pi_h E v)_I \|_{\mcTh}
 \\
  &\lesssim h^{1/2-m} \|[F_h (\pi_h E v)_I] \|_{\mcFh}
   \\
  &\lesssim h^{1/2-m} \|[F_h (\pi_h E v)_I - \pi_h E v ] \|_{\mcFh}
  \\
   &\lesssim h^{-m} \|F_h (\pi_h E v)_I - \pi_h E v  \|_{\mcTh}
     \\
   &\lesssim h^{-m} \|F_h (\pi_h E v)_I - P_h v  \|_{\mcTh}
   +
    h^{-m} \|P_h v - \pi_h E v  \|_{\mcTh}
\end{align}
where we added and subtracted $P_h v$ and used the triangle inequality. The argument can 
now be concluded in the same way as for Term $II$. 
\end{proof}

\subsection{Finite Element Method}
In order to formulate the finite element method we use the following notations.
\begin{itemize}
\item Partition $[0,T]$ into $N$ intervals of length $k = T/N$ and 
let $t_n = n k$, for $n=0,1,\dots,N$. We let $u^n = u(t_n)$ and $v^n:\Omega \rightarrow\IR$ 
denotes a function  at time $t_n$. Define the discrete first (forward) and second (central) time differences
\begin{equation}
\partial_t v^{n} =\frac{v^{n+1}  -  v^n }{k}
\end{equation}
\begin{equation}
\partial_t^2 v^{n} = \frac{v^{n+1}  - 2 v^n + v^{n-1}}{k^2}
=\frac{1}{k} (\partial_t v^{n} - \partial_t v^{n-1})
\end{equation}
\item 
Define the central difference 
\begin{align}\label{def:diff-central}
\delta_t v^n = 
 \frac12 (\partial_t v^n + \partial_t v^{n-1})
\end{align}
and note for use below that we have the summation by parts formula 
\begin{align}\label{eq:sum-by-parts}
\sum_{n=1}^{N-1} 2k ( v^n, \delta_t w^n)
&=
(v^{N-1}, w^N) + (v^{N}, w^{N-1})
- (v^1, w^0) -  (v^0, w^1)
\\ 
&\qquad - \sum_{n=1}^{N-1} 2k ( \delta_t v^n, w^n)
\end{align}
\item For the spatial discretization we employ Nitsche's method and define 
the bilinear form 
\begin{equation}\label{eq:ah-def}
a_h(u,v) =(\nabla u,\nabla v) - (\nabla_n u, v)_{\partial \Omega}
- (u,\nabla_n v)_{\partial \Omega} + \gamma h^{-1} (u,v)_{\partial \Omega}
\end{equation}
where $\nabla_n = n \cdot \nabla$ with $n$ the exterior unit normal and $\gamma>0$ 
a parameter. 
\end{itemize}

\paragraph{Method.}
The cut finite element method takes the form:  for $n=1,\dots, N-1$, find 
$u_h^{n+1} \in V_h^E$, such that
\begin{equation}\label{eq:FEM}
(\partial_t^2 u^{n}_{h},v) + a_h(u_h^n, v) = (f^n,v), \qquad \forall v \in V_h^E
\end{equation}
with initial data $u^0_h, u^1_h \in V_h$ specified below.
%
The resulting updating formula takes the form
\begin{equation}
(u^{n+1}_h,v) = ( 2 u^n_h , v) - (u^{n-1}_h,v) + k^2 a_h(u^n_h,v) + k^2(f^n,v)
\end{equation}


\subsection{Matrix Formulation and Mass Lumping}

We formulate the method on matrix form and we replace the mass matrix with a diagonal 
matrix obtained by lumping the mass matrix in order to obtain an explicit method.
\begin{itemize}
\item Let $\{\varphi_i \}_{i \in \mcI_h}$, be the nodal basis in $V_h$
  enumerated by the index set $\mcI_h$, and let $\{\varphi_i \}_{i \in
    \mcI_{I,h}}$ be the nodal basis in $V_{h,I}$ \bur{ enumerated by
    the index set $\IIh$}.
Denote the dimensions of $V_h$ and $V_{h,I}$ by $N_h$ and $N_{h,I}$.  We then note that 
$\{ E_h \varphi_i \}_{i \in \mcI_{h,I}}$ is a basis in $V_h^E$.

\item Define the 
mass matrix, stiffness matrix, and load vector associated with the full finite element space $V_h$ by
\begin{equation}\label{eq:matrices_full}
(\hatM_{h} \hatv,\hatw)_{\mcI_{h}}= (v,w), \quad
(\hatA_h \hatv,\hatw)_{\mcI_{h}}= a_h(v,w), \quad
(\hatb_h, \hatw)_{\mcI_{h}} = (f,w)
\end{equation}
for all \bur{$v,w \in V_{h}$}. Here $\hatv$ denotes the coefficients of $v$ when expanded in the basis of $V_{h}$. 

\item Define the 
mass matrix, stiffness matrix, and load vector associated with the extended finite element space by
\begin{equation}\label{eq:matrices_Int}
(\hatM_{h,I} \hatv,\hatw)_{\mcI_{h,I}}= (v,w), \quad
(\hatA_{h,I} \hatv,\hatw)_{\mcI_{h,I}}= a_h(v,w), \quad
(\hatb_{h,I}, \hatw)_{\mcI_{h,I}} = (f,w)
\end{equation}
for all $v,w \in \bur{V_h^E}$. Here $\hatv$ denotes the coefficients of $v$ when expanded in the basis of $V_h^E$. 

\item Define the matrix representation of $E_h$ by
\begin{align}\label{eq:Eh-matrix}
(\hatE_h \hatv, \hatw)_{\mcI_h} = (\widehat{E_h v}, \hatw)_{\mcI_h}  
\end{align}
for all $v \in V_{h,I}$, $w \in V_h$. We note that $\hatE_h$ is an $N_h \times N_{h,I}$ 
matrix and that it 
follows from (\ref{eq:Eh-matrix}) that $\hatE_h \hatv = \widehat{E_h
  v}$. We then have \bur{for $v,w \in V_{h,I}$},
\begin{align}
(\hatv, \hatM_{h,I} \hatw)_{\IIh}
&=(E_h v, E_h w) 
= (\widehat{E_h v}, \hatM_h \widehat{E_h w} )_{\mcI_h} 
\\
&\qquad = (\hatE_h \hatv, \hatM_h \hatE_h \hatw)_{\IIh} 
= (\hatv, \hatE^T_h \hatM_h \hatE_h \hatw)_{\IIh} 
\end{align}
 Therefore the mass matrix on the extended finite element space can be expressed
in terms of the mass matrix on the full finite element space as follows
\begin{align}
\hatM_{h,I} = \hatE_h^T \hatM_h \hatE_h
\end{align}
and in the same way 
\begin{align}
\hatA_{h,I} =\hatE_h^T \hatA_h \hatE_h, \qquad \hatb_{h,I} = \hatE_h^T \hatb_h
\end{align}

\item  Define the lumped mass matrix 
$\hatM_L$ as the diagonal matrix with diagonal elements equal to the row sums 
of the mass matrix \bur{$\hatM_{h,I} $}, 
\begin{equation}\label{eq:lump-def}
\hatM_{L,{ij}} = 
\begin{cases}
0 & i \neq j
\\
\sum_{l \in \mcI_{h,I}(i) } \hatm_{il} & i = j
\end{cases}
\end{equation}
where for each $i \in \mcI_{h,I}$, 
\begin{equation}
\mcI_{h,I}( i ) = \{ j \in \mcI_{h,I} : \hatm_{ij} \neq 0 \}
\end{equation}
is the set of indices for which there is a nonzero entry in the $i$:th row (and column due to symmetry) of \bur{$\hatM_{h,I}$}. We also define the induced lumped mass inner product 
\begin{equation}
(v,w)_L = (\hatM_L \hatv, \hatw )_{\mcI_{h,I}} , \qquad v,w \in V_h^E
\end{equation}

\end{itemize}

\paragraph{Explicit Method.}
We define the lumped mass method:  for $n=1,\dots, N-1$, find 
$u^{n+1} \in V_h^E$, such that 
\begin{equation}\label{eq:FEM-lump}
(\partial_t^2 u^{n}_{h},v)_L + a_h(u_h^n, v) = (\bur{f^n_h},v)_L, \qquad \forall v \in V_h^E
\end{equation}
with initial data $u^0_h, u^1_h \in V_h$ \bur{ and $f^n_h \in V_h^E$ a
  suitable approximation of $f(t^n)$}. Using the fact that $\hatM_L$ is diagonal we obtain 
the explicit updating formula for $n=2,\dots, N-1$, 
\begin{equation}
\hatu^{n+1}_h = 2 \hatu^n_h - \hatu^{n-1}_h - k^2 \hatM^{-1}_L \hatA_{h,I}  \hatu^n_h 
+k^2 \hatb^n_L
\end{equation}
where $\hatb^n_L$ is the load vector associated with the lumped mass inner product
\begin{align}
(\bur{\hatb^n_L}, \hatv)_{\IIh} = (f^n,v)_L, \qquad v\in E_h \bur{V_{h,I}}
\end{align}
It follows that $\bur{\hatb^n_L} = \hatM_L \hatf^n$ where $\hatf^n$ is the 
internal nodal values of \bur{$f_h$}.
\section{Analysis of the Method}

\subsection{Ritz Projection}

\bur{In this section we will discuss the Ritz projection on the
  extended finite element space $V_h^E$. This will provided us
  with an interpolant with properties suitable for the error analysis
  of the wave equation. It also provides an analysis of Poisson's
  equation discretized using the $V_h^E$ in a cutFEM framework.}

Let 
\begin{equation}
\tn v \tn_h^2 = \| \nabla v \|^2 + h \| \nabla_n v \|^2_{\partial \Omega} + h^{-1} \| v \|^2_{\partial \Omega}
\end{equation}
\begin{lem}\label{lem:ah} The form $a_h$ defined in (\ref{eq:ah-def}) is continuous,
\begin{equation}\label{eq:ah-cont}
a_h(v,w) \lesssim \tn v \tn_h \tn w \tn_h, \qquad v,w \in H^{3/2 +\epsilon}(\Omega) + V_h
\end{equation}
and for $\gamma$ large enough coercive,
\begin{equation}\label{eq:ah-coer}
\tn v \tn_h^2 \lesssim a_h(v,v), \qquad v \in V_h^E
\end{equation}
\end{lem}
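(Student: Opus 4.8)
The plan is to prove the two bounds separately, with continuity a direct Cauchy--Schwarz estimate and coercivity reducing, after the standard Young-inequality step, to a single trace-type estimate supplied by the stability of the extension. For the continuity bound \eqref{eq:ah-cont} I would estimate the four terms of $a_h$ one at a time and regroup the powers of $h$ to match the triple norm: Cauchy--Schwarz gives $(\nabla v,\nabla w)\le\|\nabla v\|\,\|\nabla w\|$; the two consistency terms are written as $(\nabla_n v,w)_{\partial\Omega}\le(h^{1/2}\|\nabla_n v\|_{\partial\Omega})(h^{-1/2}\|w\|_{\partial\Omega})$ and symmetrically; and the penalty term as $\gamma h^{-1}(v,w)_{\partial\Omega}\le\gamma(h^{-1/2}\|v\|_{\partial\Omega})(h^{-1/2}\|w\|_{\partial\Omega})$. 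Summing and applying discrete Cauchy--Schwarz over the three factor pairs yields \eqref{eq:ah-cont}. The regularity $H^{3/2+\epsilon}(\Omega)$ enters only to guarantee $\nabla_n v|_{\partial\Omega}\in L^2(\partial\Omega)$, so that $\tn v\tn_h$ is finite; no information about the cut geometry is used here.

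For the coercivity \eqref{eq:ah-coer} I would start from $a_h(v,v)=\|\nabla v\|^2-2(\nabla_n v,v)_{\partial\Omega}+\gamma h^{-1}\|v\|^2_{\partial\Omega}$ and bound the cross term by Young's inequality, $2|(\nabla_n v,v)_{\partial\Omega}|\le\delta\,h\|\nabla_n v\|^2_{\partial\Omega}+\delta^{-1}h^{-1}\|v\|^2_{\partial\Omega}$, giving
\[
a_h(v,v)\ge\|\nabla v\|^2-\delta\,h\|\nabla_n v\|^2_{\partial\Omega}+(\gamma-\delta^{-1})h^{-1}\|v\|^2_{\partial\Omega}.
\]
Everything then hinges on absorbing $\delta\,h\|\nabla_n v\|^2_{\partial\Omega}$ into $\|\nabla v\|^2=\|\nabla v\|^2_\Omega$. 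Since $v\in V_h^E\subset V_h$ is piecewise linear, its gradient is constant on each element, so on a boundary element $T\in\mcT_{h,B}$ one has $\|\nabla_n v\|^2_{\partial\Omega\cap T}\le|\nabla v|^2\,|\partial\Omega\cap T|\lesssim h^{-1}\|\nabla v\|^2_T$, using quasi-uniformity ($|T|\sim h^d$) together with the surface bound $|\partial\Omega\cap T|\lesssim h^{d-1}$, valid for $h_0$ small relative to the curvature of $\Gamma$. Summation yields the discrete trace inequality $h\|\nabla_n v\|^2_{\partial\Omega}\lesssim\|\nabla v\|^2_{\mcT_h}$ on the full active mesh.

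The decisive step, and the only place where the construction of $V_h^E$ is used, is the passage from the full active mesh back to the physical domain. Writing $v=E_h v_I$ with $v_I=v|_{\mcT_{h,I}}$ and invoking the extension stability \eqref{eq:Eh-stab} with $m=1$, I obtain $\|\nabla v\|_{\mcT_h}=\|\nabla E_h v_I\|_{\mcT_h}\lesssim\|\nabla v_I\|_{\mcT_{h,I}}=\|\nabla v\|_{\mcT_{h,I}}\le\|\nabla v\|_\Omega$, the last step because interior elements satisfy $T\subset\Omega$. Combining, $h\|\nabla_n v\|^2_{\partial\Omega}\lesssim\|\nabla v\|^2_\Omega$. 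It then suffices to fix $\delta$ small enough that the absorbed constant is at most $\tfrac12$, and then $\gamma$ large enough that $\gamma-\delta^{-1}>0$, which gives $\|\nabla v\|^2+h^{-1}\|v\|^2_{\partial\Omega}\lesssim a_h(v,v)$; since the same trace bound controls the remaining term $h\|\nabla_n v\|^2_{\partial\Omega}\lesssim\|\nabla v\|^2$, the full triple norm follows, $\tn v\tn_h^2\lesssim a_h(v,v)$. The main obstacle is exactly this absorption: in unstabilized CutFEM the trace term can only be bounded by $\|\nabla v\|^2_{\mcT_h}$, which a badly cut element lets escape the physical-domain norm $\|\nabla v\|^2_\Omega$, and this is the gap ghost penalty usually fills. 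Here the elementwise-constant gradient makes the trace inequality hold on all of $\mcT_h$ without cut issues, and the stability \eqref{eq:Eh-stab} of the extended space supplies the control of the boundary-strip energy by the interior energy, so that no additional stabilization is required.
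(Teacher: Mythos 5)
Your proposal is correct and follows essentially the same route as the paper: continuity by weighted Cauchy--Schwarz, and coercivity by bounding the cross term with Young's inequality, controlling $h\|\nabla_n v\|^2_{\partial\Omega}$ through the elementwise inverse (trace) estimate on boundary elements, and then --- the decisive step, which you identify exactly as the paper does --- invoking the $m=1$ stability (\ref{eq:Eh-stab}) of $E_h$ together with $\mcT_{h,I}\subset\Omega$ to bring the mesh-domain gradient norm back to $\|\nabla v\|_\Omega$, before fixing $\delta$ small and $\gamma$ large. The only cosmetic difference is that you derive the discrete trace inequality explicitly from the constant gradient of piecewise linears, where the paper simply cites $h^{1/2}\|\nabla v\|_{\partial\Omega\cap T}\leq C\|\nabla v\|_T$.
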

\begin{proof}
The continuity follows directly from Cauchy-Schwarz and  to establish the coercivity we 
start from
\begin{align}\label{eq:ah-a}
a_h(v,v) = \|\bur{\nabla} v \|^2 - 2 ( \nabla_n v, v)_{\partial \Omega} + \gamma h^{-1} \| v \|^2_{\partial \Omega} 
\end{align}
We have the estimate 
\begin{align}\label{eq:ah-b}
2( \nabla_n v, v)_{\partial \Omega} 
&\leq 2 \| \nabla_n v \|_{\partial \Omega}  \|v\|_{\partial \Omega} 
\\
&\leq C \| \nabla v \|^2_{\mcT_h(\partial \Omega)} h^{-1/2} \|v\|_{\partial \Omega} 
\\
&\leq  C^2 \delta  \| \nabla v \|^2_{\mcT_h(\partial \Omega)} + \delta^{-1} h^{-1} \|v\|^2_{\partial \Omega} 
\\
&\leq  C^2 \delta  \| \nabla v \|^2_{\mcT_{h,I}} + \delta^{-1} h^{-1} \|v\|^2_{\partial \Omega} 
\\ \label{eq:ah-c}
&\leq  C^2 \delta  \| \nabla v \|^2 + \delta^{-1} h^{-1} \|v\|^2_{\partial \Omega} 
\end{align}
where we used the inverse estimate $h^{1/2} \| \nabla v  \|_{\partial \Omega \cap T} \leq C \| \nabla v \|_T$, the stability (\ref{eq:Eh-stab}) of the discrete extension operator $E_h$, and finally the fact that $\mcT_{h,I} \subset \Omega$. Combining the estimates we find that 
\begin{align}
a_h(v,v) \geq (1 - C^2 \delta )\| \bur{\nabla} v \|^2 + (\gamma - \delta^{-1} ) h^{-1} \| v \|^2_{\partial \Omega} \gtrsim  \| \bur{\nabla} v \|^2 + h^{-1} \| v \|^2_{\partial \Omega} 
\end{align}
where we chose $\delta$ small enough and $\gamma$ large enough. Finally, (\ref{eq:ah-b})-(\ref{eq:ah-c}) give the estimate $h \| \nabla_n v \|^2_{\partial \Omega} \lesssim \| \nabla v \|^2$ the coercivity (\ref{eq:ah-coer}) follows.
\end{proof}

 In view of Lemma \ref{lem:ah}, we note that we can define the norm
\begin{equation}\label{eq:norm-ah}
\| v \|^2_{a_h} = a_h(v,v), \qquad v \in V_h^E
\end{equation}
directly associated with the Nitsche form, which is equivalent with 
$\tn \cdot \tn_h$ on $V_h^E$,
\begin{align}
 \| v \|_{a_h} \sim \tn v \tn_h,  \qquad v \in V_h^E
\end{align}
It will later be convenient to work with $\| \cdot \|_{a_h}$ instead of $\tn \cdot \tn_h$.

We begin by defining the Ritz projection $R_h : H^{s}(\Omega) \rightarrow E_h(V_{h,I})$, 
for $s>3/2$, by
\begin{equation}\label{eq:Rh-def}
a_h( R_h v , w ) = a_h (v, w) \qquad \forall w \in \bur{ V_h^E}
\end{equation}
\begin{lem} There is a constant such that,
\begin{equation}\label{eq:Rh-errorest}
\| \nabla^m (v - R_h v) \| \lesssim h^{2-m} \| v \|_{H^2(\Omega)},
\qquad m=0,1
\end{equation}
\end{lem}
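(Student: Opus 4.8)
The plan is to bound the Ritz projection error using Galerkin orthogonality together with the interpolation estimate from the lemma preceding the Finite Element Method section. Let me sketch the standard C\'ea-plus-Aubin-Nitsche strategy adapted to the cut setting.

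=== PROOF SKETCH ===

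\paragraph{Energy norm estimate ($m=1$).}
First I would treat the case $m=1$ via a standard best-approximation argument. Since $R_h v - I_h v \in V_h^E$, coercivity \eqref{eq:ah-coer} and the definition \eqref{eq:Rh-def} of $R_h$ give Galerkin orthogonality $a_h(v - R_h v, w) = 0$ for all $w \in V_h^E$. Writing $v - R_h v = (v - I_h v) + (I_h v - R_h v)$ and using coercivity followed by continuity \eqref{eq:ah-cont},
\begin{equation}
\tn R_h v - I_h v \tn_h^2 \lesssim a_h(R_h v - I_h v, R_h v - I_h v)
= a_h(v - I_h v, R_h v - I_h v) \lesssim \tn v - I_h v \tn_h \tn R_h v - I_h v \tn_h,
\end{equation}
so $\tn R_h v - I_h v \tn_h \lesssim \tn v - I_h v \tn_h$. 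By the triangle inequality $\tn v - R_h v \tn_h \lesssim \tn v - I_h v \tn_h$. It remains to bound the latter in the triple norm, which involves not only $\| \nabla(v - I_h v) \|$ but also the two boundary terms $h \| \nabla_n(v - I_h v) \|_{\partial\Omega}^2$ and $h^{-1} \| v - I_h v \|_{\partial\Omega}^2$. The interior gradient term is controlled by \eqref{eq:Ih-errorest}. For the boundary contributions I would apply an inverse trace inequality $h \| w \|_{\partial\Omega \cap T}^2 \lesssim \| w \|_{\mcT_h(\partial\Omega)}^2$ to convert boundary $L^2$ norms of $\nabla(v - I_h v)$ and $v - I_h v$ back into element norms over the boundary strip, then invoke \eqref{eq:Ih-errorest} again. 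This yields $\tn v - I_h v \tn_h \lesssim h \| v \|_{H^2(\Omega)}$, giving the claim for $m=1$.

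\paragraph{$L^2$ estimate ($m=0$) by duality.}
For $m=0$ I would run an Aubin--Nitsche duality argument. Let $\phi$ solve the dual problem $-\Delta \phi = e$ in $\Omega$, $\phi = 0$ on $\Gamma$, where $e = v - R_h v$, with elliptic regularity $\| \phi \|_{H^2(\Omega)} \lesssim \| e \|$. Testing the strong form against $e$ and integrating by parts (using that the weak form underlying $a_h$ is consistent with the boundary-value problem, so $a_h(w, \phi) = (e, w)$ for the continuous $\phi$ extended appropriately), I would write
\begin{equation}
\| e \|^2 = a_h(e, \phi) = a_h(e, \phi - I_h \phi),
\end{equation}
where the last equality uses Galerkin orthogonality since $I_h \phi \in V_h^E$. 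Continuity then gives $\| e \|^2 \lesssim \tn e \tn_h \tn \phi - I_h \phi \tn_h \lesssim (h \| v \|_{H^2(\Omega)})(h \| \phi \|_{H^2(\Omega)}) \lesssim h^2 \| v \|_{H^2(\Omega)} \| e \|$, and dividing by $\| e \|$ closes the estimate.

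\paragraph{Main obstacle.}
The delicate point is the consistency step in the duality argument, namely justifying $a_h(w,\phi) = (e,w)$ where $\phi$ is the true (smooth) dual solution rather than a finite element function. This requires that $\phi$ and its normal derivative satisfy the right boundary behaviour so that the Nitsche boundary terms telescope correctly; because $a_h$ is defined with integrals over the exact boundary $\partial\Omega$ and the continuous extension $E\phi$ is used off $\Omega$, one must verify that the consistency of the Nitsche form holds on the exact geometry and that $\phi \in H^2(\Omega) \subset H^{3/2+\epsilon}(\Omega)$ lies in the space where continuity \eqref{eq:ah-cont} is valid. A secondary technical care is that all estimates are phrased over $\mcT_{h,I}$ or $\mcT_h$ while norms in \eqref{eq:Rh-errorest} are over $\Omega$; since $\mcT_{h,I} \subset \Omega \subset \Omega_h$, the interior-to-physical-domain comparison is harmless, but one should confirm the stability \eqref{eq:Eh-stab} is what bridges the extended nodal values outside $\Omega$ back to interior control, exactly as it was used in the coercivity proof of Lemma \ref{lem:ah}.
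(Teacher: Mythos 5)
Your proposal is correct and follows essentially the same route as the paper: a C\'ea-type argument (coercivity \eqref{eq:ah-coer}, Galerkin orthogonality from \eqref{eq:Rh-def}, continuity \eqref{eq:ah-cont}) against the interpolant $I_h v$ with the estimate \eqref{eq:Ih-errorest} for $m=1$, and an Aubin--Nitsche duality argument for $m=0$, which the paper compresses to ``established using duality in the usual way.'' The only caveat is that to bound the boundary terms in $\tn v - I_h v \tn_h$ you need the trace inequality for $H^1$ functions on (cut) elements, $\| w \|^2_{\partial\Omega \cap T} \lesssim h^{-1} \| w \|^2_T + h \| \nabla w \|^2_T$, rather than a purely discrete inverse trace inequality, since $v - I_h v$ is not piecewise polynomial --- a routine fix that does not affect the argument.
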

\begin{proof}[Proof of (\ref{eq:Rh-errorest}).] Adding and subtracting an interpolant 
\begin{align}
\| \nabla^m ( v - R_h v ) \| &\leq \| \nabla^m ( v - I_h v ) \| + \|
                               \nabla^m ( I_h v - \bur{R_h} v ) \|
\\
&\lesssim h^{2-m} \| v \|_{H^2(\Omega)} + \| \nabla^m ( I_h v -
  \bur{R_h} v ) \|
\end{align}
where we used the interpolation error estimate (\ref{eq:Ih-errorest}) for the first term. For the second, coercivity (\ref{eq:ah-coer}), 
orthogonality (\ref{eq:Rh-def}),  and continuity (\ref{eq:ah-cont}),  give
\begin{align}
\tn I_h v - R_h v \tn^2_h &\lesssim a_h(I_h v - R_h v,\pi_h v - R_h v)
\\
&=a_h(I_h v - v,\pi_h v - R_h v)
\\
& \lesssim \tn I_h v - v\tn_h \tn \pi_h v - R_h v \tn_h
\end{align}
and therefore, using once more the interpolation estimate (\ref{eq:Ih-errorest}) for $I_h$,
\begin{align}
\tn I_h v - R_h v \tn_h &\lesssim \tn I_h v - v \tn_h 
\lesssim h \| v \|_{H^2(\Omega)}
\end{align}
The $L^2$ estimate is established using duality in the usual way.
\end{proof}

\begin{rem} Note that $u_h = R_h u$ is the finite element solution to 
\begin{align}
-\Delta u = f \quad \text{in $\Omega$}, 
\qquad u = 0 \quad \text{on $\partial \Omega$}
\end{align}
and thus (\ref{eq:Rh-errorest}) provides error estimates for a cut finite element method 
based on the extension operator $E_h$ for the Poisson equation.  
\end{rem}

\subsection{Estimate of the Lumping Error}
We begin by showing a stability estimate for the lumped inner product and then we prove 
an estimate of the consistency error resulting from lumping the mass matrix.  

Let $\| v \|^2_L = (v,v)_L$ be the norm associated with the lumped scalar product. We then have the stability 
\begin{equation}\label{eq:lump-stab}
\| v \|_{\mcTh} \lesssim \| v \|_L, \qquad v \in V_h^E
\end{equation}
This estimate follows from the $L^2$ stability (\ref{eq:Eh-stab}) of the extension operator followed by equivalence of the lumped product and the 
full $L^2$ product on the set of interior triangles
\begin{align}
 \| E_h v \|_{\mcTh} \lesssim \| v \|_{\mcT_{h,I}} \sim h^{d/2} \| \hatv \|_{\mcX_{h,I}} \sim \| v \|_L
\end{align} 
where $\mcX_{h,I}$ denotes the set of nodes in $\mcT_{h,I}$. \bur{Note
  that the last relation above holds since all elements of $M_{L}$
  must be $O(h^{d})$, since only interior nodes are considered.}

\begin{lem} There is  a constant such that 
\begin{equation}\label{eq:lump-est}
| (v,w) - (v,w)_L| \lesssim h^2 \| \nabla v \|_{\bur{\Omega}} \| \nabla w
\|_{\bur{\Omega}},\quad \bur{v, w \in V_h^E}
\end{equation}
\end{lem}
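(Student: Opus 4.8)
The plan is to rewrite the lumping error as a single manifestly signed quadratic form in the nodal differences, exploit the crucial structural fact that the entries $\hatm_{ij}$ of $\hatM_{h,I}$ are nonnegative, and then convert nodal differences into element gradients. First I would record the algebraic identity behind the lumping. Writing $(v,w)=\sum_{i,j\in\mcI_{h,I}}\hatm_{ij}\hatv_i\hatw_j$ and, since the lumped diagonal is the row sum, $(v,w)_L=\sum_{i}\hatv_i\hatw_i\sum_{j}\hatm_{ij}$, the symmetry $\hatm_{ij}=\hatm_{ji}$ gives
\[
(v,w)-(v,w)_L=-\frac12\sum_{i,j\in\mcI_{h,I}}\hatm_{ij}(\hatv_i-\hatv_j)(\hatw_i-\hatw_j).
\]
Because all $\hatm_{ij}\ge 0$, the right-hand side is a genuine weighted bilinear form, so the Cauchy--Schwarz inequality yields
\[
|(v,w)-(v,w)_L|\le\frac12\,Q(v)^{1/2}Q(w)^{1/2},\qquad Q(v):=\sum_{i,j}\hatm_{ij}(\hatv_i-\hatv_j)^2,
\]
and it suffices to prove the single diagonal estimate $Q(v)\lesssim h^2\|\nabla v\|_\Omega^2$.

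The second step bounds $Q(v)$. Since $v=E_h(v)_I$ and the basis is $\{E_h\varphi_i\}$, the coefficients $\hatv_i$ are exactly the interior nodal values $v(x_i)$, while $\hatm_{ij}=(E_h\varphi_i,E_h\varphi_j)_{\mcTh}$. I would use the locality of the extended basis functions: by \eqref{eq:assumption} the support of $E_h\varphi_i$ lies within $O(h)$ of $x_i$, so each row of $\hatM_{h,I}$ has $O(1)$ nonzero entries, and \eqref{eq:Eh-stab} with $m=0$ gives $\hatm_{ij}\le\|E_h\varphi_i\|_{\mcTh}\|E_h\varphi_j\|_{\mcTh}\lesssim h^d$. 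For each nonzero pair the nodes $x_i,x_j$ are $O(h)$ apart, so joining them by a short path of mesh edges and using that $v$ is piecewise linear gives $(v(x_i)-v(x_j))^2\lesssim h^{2-d}\|\nabla v\|^2_{\omega_{ij}}$ over a local patch $\omega_{ij}\subset\mcTh$. Summing with finite overlap, $Q(v)\lesssim h^d\cdot h^{2-d}\|\nabla v\|^2_{\mcTh}=h^2\|\nabla v\|^2_{\mcTh}$.

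Finally, for $v\in V_h^E$ the extension stability \eqref{eq:Eh-stab} with $m=1$ gives $\|\nabla v\|_{\mcTh}\lesssim\|\nabla v\|_{\mcT_{h,I}}$, and since every element of $\mcT_{h,I}$ lies in $\Omega$ we have $\|\nabla v\|_{\mcT_{h,I}}\le\|\nabla v\|_\Omega$; combining these with the previous display closes the diagonal bound and hence \eqref{eq:lump-est}. The main obstacle is the second step: the lumping is defined abstractly at the matrix level on the subspace $V_h^E$, so the usual element-by-element quadrature argument does not apply directly. One must instead control the weighted nodal-difference form through the sparsity and $O(h^d)$ size of the extended mass matrix together with the nonnegativity of its entries, and convert differences of values at nodes that need not share an element into local gradient norms via a bounded-length path argument.
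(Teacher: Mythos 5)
Your proposal is correct and takes essentially the same route as the paper: the paper likewise identifies $\hatM_L - \hatM_{h,I}$ as a weighted graph Laplacian, estimates the resulting quadratic form in nodal differences by Cauchy--Schwarz together with the bound $|\hatm_{ij}| \lesssim h^d$, converts nodal differences into local gradient norms over $O(h)$ patches using that $v$ is piecewise linear, and concludes with the extension stability of Lemma \ref{lem:stab_ext} and the inclusion $\mcT_{h,I} \subset \Omega$. The only cosmetic difference is that you invoke nonnegativity of the entries $\hatm_{ij}$ for a weighted Cauchy--Schwarz, whereas the paper's version (an edge-by-edge decomposition of $\hatB = \hatM_L - \hatM_{h,I}$ followed by unweighted Cauchy--Schwarz) needs only $|\hatB_{ij}| \lesssim h^d$ and is indifferent to the signs; if you wish to avoid relying on the sign claim you can simply replace your weights by $|\hatm_{ij}|$.
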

\begin{proof} Using the definitions \bur{(\ref{eq:matrices_Int})} and (\ref{eq:lump-def}) 
of the mass matrix $\hatM_{h,I}$ and the lumped mass matrix $\hatM_L$ we have 
\begin{align}\label{eq:lump-a}
 (v,w)_L - (v,w)  &= (\hatv, \hatM_L \hatw)_{\IIh} -  (\hatv, \bur{\hatM_{h,I}} \hatw)_{\IIh} 
\\
&\qquad =(\hatv, (\hatM_L - \bur{\hatM_{h,I}}) \hatw)_{\IIh}  = (\hatv, \hatB \hatw )_{\IIh}
\end{align}
with $\hatB = \hatM_L - \bur{\hatM_{h,I}}$. We note that $\hatB$ is indeed a graph Laplacian 
on the undirected weighted graph with vertices  $\mcX_{h,I}$, enumerated by $\mcT_{h,I}$, 
and edges 
\begin{equation}\label{eq:lump-b}
\mcE = \{ (i,j) : \hatB_{ij} \neq 0 \}
\end{equation}
with weights $\hatB_{ij}$. This follows from the fact that the diagonal elements of $\hatB$ 
is precisely the sum of the off diagonal elements in each row
\begin{align}
\hatB_{ii} = \sum_{\IIh(i) \setminus \{i\}} \hatB_{ij}
\end{align}
which is the sum of the weights on the graph edges that has node $i$ as a vertex. With each 
graph edge $E \in \mcE$ we associate the positive semi definite $N_{h,I} \times N_{h,I}$ matrix 
\begin{equation}\label{eq:lump-c}
B_E = 
\hatB_{ij} (e_i \otimes e_i  -  e_i \otimes e_j - e_j \otimes e_i + e_j \otimes e_j )
%
%
\end{equation}
where $\{e_i\}_{\IIh}$ is the canonical basis in $\IR^{N_{h,I}}$. Note
\bur{ that} $B_E$ maps the two dimensional space $\text{span}\{e_i,e_j\}$ into itself, and the corresponding matrix takes the form
\begin{equation}
{B}_E|_ {\text{span}\{e_i,e_j\}} =\hatB_{ij}  \left[
\begin{matrix}
1 & -1
\\
-1 & 1
\end{matrix}
\right]
\end{equation}
We then have 
\begin{equation}
\hatB = \sum_{E \in \mcE} B_E
\end{equation}
which gives
\begin{align}\label{eq:lump-d}
(\hatv, \hatB \hatw )_{\IIh} = \sum_{E \in \mcE} v_E B_{E} w_E =  \sum_{E \in \mcE} B_{ij} [v]_E [ w ]_E
\lesssim h^d\left( \sum_{E\in \mcE} [v]_E^2\right)^{1/2}\left( \sum_{E\in \mcE} [w]_E^2\right)^{1/2}
\end{align}
where $[v]_E = v_i - v_j$ is the difference between the nodal values $\{i,j\} = \IIh(E)$, connected by the edge $E$, and we used the bound $|\hatB_{ij}| \lesssim h^d$ which holds since $\hatB_{ij}$ is a bounded linear combination of elements in $\hatM_{h,I}$. Note 
that in the definition of $[\cdot]_E$ the order of $i$ and $j$ does not matter since we are working with a quadratic form with arguments that both are jumps. To estimate 
$\sum_{E\in \mcE} [v]_E^2$ we note that 
\begin{align}\label{eq:lump-e}
\sum_{E\in \mcE} [v]_E^2
\leq \sum_{i \in \IIh} \sum_{j \in \IIh(i)} (v_i - v_j)^2
\end{align}
where $\bur{\IIh(i)} \subset \IIh$ is the set of indices connected to
\bur{the node $i$} by an edge $E\in \mcE$.
Next let $\mcT_{h}(i)$ be the set of elements with at least one node in $\IIh(i)$ and note 
that it follows from the construction of the extension operator and shape regularity 
that there is a uniform bound, 
independent of $h \in (0,h_0]$ and $i\in \IIh$, on the number of elements in $\mcT_h(i)$ and that $\text{diam}(\mcT_{h}(i))\lesssim h$. We then have 
\begin{align}\label{eq:lump-f}
\sum_{j \in \IIh(i)} h^d \ (v_i - v_j)^2 \lesssim \| v_i - v \|^2_{\mcT_h(i)} 
\lesssim h^2 \| \nabla v \|^2_{\mcT_h(i)}
\end{align}
since $v \in V_h$. It follows that 
\begin{align}\label{eq:lump-g}
\sum_{E\in \mcE} [v]_E^2
\leq \sum_{i \in \IIh} \sum_{j \in \IIh(i)} (v_i - v_j)^2 \lesssim 
\sum_{i \in \IIh}  h^2 \| \nabla v \|^2_{\mcT_h(i)}
\lesssim h^2 \|\nabla v\|^2_{\mcT_h}
\end{align}
Combining  (\ref{eq:lump-d})  and (\ref{eq:lump-g}) \bur{and applying
  Lemma \ref{lem:stab_ext}}
we arrive at the desired estimate.
\end{proof}

\subsection{Discrete Stability}
\bur{
To prepare the terrain for the error analysis
we will prove stability for a slightly more general version of
(\ref{eq:FEM-lump}). Indeed we introduce a right hand side that
consists of two parts, expressed as functionals on $V_h$,  $r_1 =
\{r_1^n\}_{n=1}^{N}$ and $r_2 = \{r_2^n\}_{n=1}^{N}$, $r_i^n : V_h
\mapsto \mathbb{R}$. They will later be identified with two different
sources of approximation error driving the perturbation equation.  The
reason for this split is that optimal estimates require $r_1$ and
$r_2$ to be continuous with respect to different (discrete)
topologies, $r_1$ with respect to a discrete $H^1$-norm and $r_2$ with
respect to a discrete $L^2$-norm. This is a consequence of fact that the test function in
the derivation of the stability estimate is a discrete first order
time derivative and that the lumped mass approximation estimate
(\ref{eq:lump-est}) requires control of the gradient of the test
function. To avoid the appearance of mixed derivatives, that can not
be controlled, we apply
summation by parts in the $r_1$ part and move the discrete time
derivative from the test function to the functional. To provide bounds
in term of these functionals we recall the standard definition of norms
for linear functionals $l:V_h \rightarrow \IR$, using the appropriate norms,
\begin{equation}
\|  l \|_{a_h,\bigstar}  = \sup_{ v \in V_h^E\setminus \{ 0 \}} \frac{l(v)}{\| v \|_{a_h}}, 
\qquad 
\|  l \|_{L,\bigstar} = \sup_{ v \in V_h^E\setminus \{ 0 \}} \frac{l(v)}{\| v \|_L}
\end{equation}

The abstract scheme that we consider takes the form,} for $n=1,\dots, N-1$, find $v^{n+1} \in V_h^E$, such that
\begin{equation}\label{eq:FEM-lump-stab}
(\partial_t^2 v^{n},w)_L + a_h(v^n, w) = r^n(w), \qquad \forall w \in V_h^E
\end{equation}
given $v^0, v^1 \in V_h$. Here $r^n: V_h \rightarrow \IR$ are the linear functionals of 
the form 
\begin{equation}
r^n(v) = r^n_1 (v ) + r^n_2(v)
\end{equation}
\bur{Let us first introduce the continuities necessary for the two
  contributions $r_1$ and $r_2$, when their argument is a central difference of the form $k \delta_t v^n$.}
For $r_1(k \delta_t v^n)$, we sum over the contributions  $r^n_1$ and apply the summation by parts formula (\ref{eq:sum-by-parts}) to 
move the central difference from the test function of the form $k \delta_t v^n$ to the 
functional, 
\begin{align}
\sum_{n=1}^{N-1}  2k r^n_1(\delta_t v^n) 
&= r^{N-1}_1 (v^N) + r^{N}_1 (v^{N-1})
- r^1_1 (v^0) - r^0_1(v^1)
- \sum_{n=1}^{N-1} 2k (\delta_t r^n_1) (  v^n ) 
\\  \label{eq:r1-cont}
&\bur{\lesssim} \tn  r_1 \tn_{a_h,\bigstar} \max_{0\leq n \leq N} \|  v^n \|_{a_h}. 
\end{align}
Where we introduce the relevant norm of the functionals $\{r^n_1\}_{n=1}^{N}$,
\begin{align}\label{eq:norm-dual-ah}
\tn r_1 \tn_{a_h,\bigstar} &=  \|r_1^{N} \|_{a_h,\bigstar} + \|r_1^{N-1} \|_{a_h,\bigstar}  
+  \|r_1^{1} \|_{a_h,\bigstar} + \|r_1^{0} \|_{a_h,\bigstar} 
+ \sum_{n=0}^{N-1} k \|\partial_t r^n_1\|_{a_h,\bigstar}
\end{align}
Note that we used the identity 
(\ref{def:diff-central}) to pass from $\delta_t$ to
$\partial_t$. Next, for $r^n_2$ we only need continuity with
  respect to the $\| \cdot \|_L$ norm and therefore we do not need to
  move the time difference in this case
\begin{align}
r^n_2 (v) \leq \| r^n_2 \|_{L,\bigstar} \| v \|_L
\end{align}
which when acting on a function of the form $2k \delta_t v^n$ leads to the estimate 
\begin{align}\label{eq:r2-cont}
\sum_{n=1}^{N-1} 2k r^n_2 (\delta_t v^n)
\leq \sum_{n=1}^{N-1} 2k \|r^n_2 \|_{L,\bigstar} \| \delta_t^n v^n \|_L
\leq \tn r_2 \tn_{L,\bigstar} \max_{0 \leq n \leq N-1} \| \partial_t v^n \|_L 
\end{align}
where 
\begin{align}\label{eq:norm-dual-L}
\tn r_2 \tn_{L,\bigstar} = \sum_{n=1}^{N-1} 2k \|r^n_2 \|_{L,\bigstar}
\end{align}
Combining (\ref{eq:r1-cont}) 
and (\ref{eq:r2-cont}) we get 
\begin{align}\label{eq:r-cont}
\left| \sum_{n=1}^{N-1} 2 k r^n(\delta_t v^n) \right|
\lesssim 
\tn r_1 \tn_{a_h,\bigstar} \max_{0\leq n \leq N} \| v^n \|_{a_h} 
+ 
\tn r_2 \tn_{L,\bigstar} \max_{0 \leq n \leq N-1} \| \partial_t v^n \|_{L} 
\end{align}

\begin{lem} Let $v^{n+1}$, $n=1,\dots,N-1$, be defined by (\ref{eq:FEM-lump-stab}) and assume that (\ref{eq:r-cont}) is satisfied. If $k/h \leq c$ with $c$ sufficiently small. Then the following stability estimate holds
\begin{align} 
&\max_{\bur{2 \leq n \leq N}} \Big( \|\partial_t v^{n-1}\|_L^2 
+ \| v^n\|^2_{a_h} + \| v^{n-1}\|^2_{a_h} \Big)
\\ \label{eq:stab}
&\qquad
\lesssim 
\|\partial_t v^{0}\|_L^2  
+ \| v^1\|^2_{a_h} + \| v^0\|^2_{a_h} 
+ \tn r_1 \tn_{a_h,\bigstar}^2 
+ \tn r_2 \tn_{L,\bigstar}^2
\end{align}
\end{lem}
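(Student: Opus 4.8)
The plan is to derive a discrete energy identity by testing the scheme with the central difference $2k\,\delta_t v^n$, which is the natural "velocity" test function for a leapfrog scheme, and then sum over $n$. First I would set $w = 2k\,\delta_t v^n$ in \eqref{eq:FEM-lump-stab}. The inertial term $(\partial_t^2 v^n, \delta_t v^n)_L$ should telescope: since $\partial_t^2 v^n = k^{-1}(\partial_t v^n - \partial_t v^{n-1})$ and $\delta_t v^n = \tfrac12(\partial_t v^n + \partial_t v^{n-1})$, the product $2k(\partial_t^2 v^n,\delta_t v^n)_L$ collapses to $\|\partial_t v^n\|_L^2 - \|\partial_t v^{n-1}\|_L^2$ via the difference-of-squares identity. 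Similarly, the stiffness term $2k\,a_h(v^n,\delta_t v^n)$ should telescope into $a_h(v^n,v^{n+1}) - a_h(v^{n-1},v^n)$ after using $\delta_t v^n = \tfrac{1}{2k}(v^{n+1}-v^{n-1})$ and the symmetry of $a_h$, producing the cross terms that combine with the kinetic energy to form a discrete energy.

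Summing the telescoped identity over $n$ from $1$ to $m-1$ for each target index $m$, I would obtain a discrete energy at step $m$ controlled by the initial energy plus the accumulated right-hand side $\sum 2k\,r^n(\delta_t v^n)$, which is bounded by \eqref{eq:r-cont}. The resulting discrete energy is of the indefinite form $\|\partial_t v^{m-1}\|_L^2 + a_h(v^m, v^{m-1})$ (a leapfrog energy), so the central difficulty is the CFL-type argument: the cross term $a_h(v^m,v^{m-1})$ is not manifestly positive and must be shown to dominate the energy from below. The hard part will be establishing that under $k/h \le c$ with $c$ small this quantity is equivalent to $\|\partial_t v^{m-1}\|_L^2 + \|v^m\|_{a_h}^2 + \|v^{m-1}\|_{a_h}^2$. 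I expect to write $v^m = v^{m-1} + k\,\partial_t v^{m-1}$, expand $a_h(v^m,v^{m-1})$, and absorb the indefinite term $k\,a_h(\partial_t v^{m-1}, v^{m-1})$ using an inverse estimate $\|v\|_{a_h} \lesssim h^{-1}\|v\|_{\mcTh} \lesssim h^{-1}\|v\|_L$ valid on $V_h^E$ (combining coercivity \eqref{eq:ah-coer}, an inverse inequality, and the lumped-norm equivalence \eqref{eq:lump-stab}); this is precisely where the restriction $k/h\le c$ enters and guarantees the energy stays coercive.

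The final step is a discrete Grönwall argument. After bounding the energy at step $m$ by the initial energy plus the two right-hand-side contributions from \eqref{eq:r-cont}, the factors $\max_{0\le n\le N}\|v^n\|_{a_h}$ and $\max_{0\le n\le N-1}\|\partial_t v^n\|_L$ appearing on the right are themselves controlled by the square root of the left-hand side. I would therefore denote by $\mathcal{E}_m$ the square root of the left-hand maximum, note that the right-hand side is bounded by $\mathcal{E}_0^2 + \tn r_1\tn_{a_h,\bigstar}\,\mathcal{E}_N + \tn r_2\tn_{L,\bigstar}\,\mathcal{E}_N$, and absorb the $\mathcal{E}_N$ factors into the left using Young's inequality (so that $\tfrac12\mathcal{E}_N^2$ is hidden on the left), yielding the squared-norm bound \eqref{eq:stab}. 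Taking the maximum over $m$ completes the argument. The two main obstacles are thus the CFL coercivity of the leapfrog energy and the correct bookkeeping of the summation-by-parts already performed in \eqref{eq:r1-cont} so that no uncontrolled mixed time derivatives appear.
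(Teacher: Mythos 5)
Your plan is correct and follows essentially the same route as the paper's proof: testing with the central difference $k\,\delta_t v^n$, telescoping both the lumped inertial term and the stiffness term, controlling the indefinite leapfrog cross term via the inverse inequality $\|w\|_{a_h}\lesssim h^{-1}\|w\|_L$ (combining \eqref{eq:ah-coer} and \eqref{eq:lump-stab}) under the CFL condition, invoking \eqref{eq:r-cont}, and closing with a kick-back (Young) absorption of the right-hand maxima. The only cosmetic difference is that you absorb $k\,a_h(\partial_t v^{m-1},v^{m-1})$ by Cauchy--Schwarz and Young, whereas the paper uses the exact identity $k^2\|\partial_t v^{N-1}\|_{a_h}^2 + 2a_h(v^{N-1},v^N) = \|v^N\|_{a_h}^2 + \|v^{N-1}\|_{a_h}^2$, which is algebraically equivalent (and note your final step is an absorption argument, not a Gr\"onwall argument as you label it).
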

\begin{proof}
To prove stability we test \eqref{eq:FEM-lump-stab}  with $w = 
4k \delta_t v^n = 2k (\partial_t v^n 
+\partial_t v^{n-1})$ for $n=1,\dots,N-1$, and sum over the time levels, 
\begin{align}\label{eq:stab-a}
 \sum_{n=1}^{N-1} 2 k (\partial_t^2 v^{n}, \partial_t v^n 
+\partial_t v^{n-1})_L &+  \sum_{n=1}^{N-1} 2 k a_h ( v^n, \partial_t v^n 
+\partial_t v^{n-1})
\\
&\qquad =
 \sum_{n=1}^{N-1} 2 k r^n(\partial_t v^n 
+\partial_t v^{n-1})
\end{align}
Here the first term on the left hand side satisfies
\begin{align}\label{eq:stab-b}
\sum_{n=1}^{N-1} 2 k (\partial_t^2 v^{n}, \partial_t v^n 
+\partial_t v^{n-1})_L
= 2 \|\partial_t v^{N-1}\|_L^2
-  2 \|\partial_t v^{1}\|_L^2
\end{align}
since 
\begin{align}
k (\partial_t^2 v^n, \partial_t v^{n+1} + \partial_t v^n )_L
&=
(\partial_t v^n - \partial_t v^{n-1}, \partial_t v^{n} + \partial_t v^{n-1} )_L
\\ 
&=
\| \partial_t v^n \|^2_L - \| \partial_t v^{n-1} \|^2_L
\end{align}
Next for the second term we have 
\begin{align}
\sum_{n=1}^{N-1}  2k a_h(v^n, \partial_t v^n 
+\partial_t v^{n-1}   ) 
&=\sum_{n=1}^{N-1}  
2 a_h(v^n,v^{n+1} - v^{n-1}) 
\\ \label{eq:stab-c}
&=2 a_h( v^{N-1}, v^N) - 2 a_h( v^{1}, v^0) 
\end{align}
Inserting (\ref{eq:stab-b}) and (\ref{eq:stab-c}) into (\ref{eq:stab-a}) we obtain
\begin{align}\label{eq:stab-d}
2 \|\partial_t v^{N-1}\|_L^2 + 2 a_h( v^{N-1}, v^N)
&= 
2 \|\partial_t v^{0}\|_L^2+ 2 a_h( v^{0}, v^1)
\\ 
&\qquad 
+  \sum_{n=1}^{N-1} \bur{r^n(2 k (\partial_t v^n 
+\partial_t v^{n-1}))}
\end{align} 
Using the identities 
\begin{align}
k^2 \|\partial_t v^{N-1}\|_{a_h}^2 + 2 a_h( v^{N-1}, v^N) 
 &=
  \| v^N\|^2_{a_h} + \| v^{N-1}\|^2_{a_h}
  \\
  k^2 \|\partial_t v^{0}\|_{a_h}^2 + 2 a_h( v^{0}, v^1) 
 &=
  \| v^1\|^2_{a_h} + \| v^{0}\|^2_{a_h}
\end{align}
we may write (\ref{eq:stab-d}) in the form
\begin{align}
&2 \|\partial_t v^{N-1}\|_L^2 - k^2 \| \partial_t v^{N-1} \|_{a_h}^2
+ \| v^N\|^2_{a_h} + \| v^{N-1}\|^2_{a_h}
\\
&\qquad = 
2 \|\partial_t v^{0}\|_L^2   - k^2 \| \partial_t v^{0} \|_{a_h}^2
+ \| v^1\|^2_{a_h} + \| v^0\|^2_{a_h} 
+  \sum_{n=1}^{N-1} 2 k \bur{r^n}(\partial_t v^n 
+\partial_t v^{n-1}))
\end{align} 
Using an inverse inequality followed by the stability (\ref{eq:lump-stab}), we get
\begin{align}
\| w \|^2_{a_h} \lesssim h^{-2} \| w \|^2_{\mcTh} \lesssim h^{-2} \|w \|^2_L
\end{align}
which, with $w = \partial_t v^{N-1}$, gives
\begin{equation}\label{eq:stabation-d}
k^2 \|\partial_t v^{N-1}\|_{a_h}^2 \lesssim h^{-2} k^2 \|\partial_t v^{N-1}\|_{\mcTh}^2 \lesssim  h^{-2} k^2 \|\partial_t v^{N-1}\|_L
\end{equation}
Using the CFL condition $C h^{-2} k^2  \leq C c \leq 1$, 
where $C$ is the hidden constant in (\ref{eq:stabation-d}), and we may take $c$ 
small enough due to the assumption in the theorem, we arrive at
\begin{align}
&\|\partial_t v^{N-1}\|_L^2 
+ \| v^N\|^2_{a_h} + \| v^{N-1}\|^2_{a_h}
\\
& \leq 
2 \|\partial_t v^{0}\|_L^2  
+ \| v^1\|^2_{a_h} + \| v^0\|^2_{a_h} 
+  2\left|\sum_{n=1}^{N-1} 2 k \bur{r^n} (\delta_t v^n )\right|
\\ \label{eq:stab-f}
& \leq 
2 \|\partial_t v^{0}\|_L^2  
+ \| v^1\|^2_{a_h} + \| v^0\|^2_{a_h} 
\\
&\qquad 
+   2 \tn r_1 \tn_{a_h,\bigstar} \max_{\bur{0 \leq  n \leq N}} 
\| v^n \|_{a_h} +   2 \| r_2 \|_{L,\bigstar}  \max_{1 \leq  n \leq N-1}  \| \partial_t v^n \|_{L}
\end{align} 
where we used the identity $4 k \delta_t v^n = 2k ( \partial_t v^n + \partial_t v^{n-1} )$ and 
the bound (\ref{eq:r-cont}). Next keeping $N$ fixed on the right hand side, we note that (\ref{eq:stab-f})  holds with $N$ replaced by an arbitrary $n=2,\dots, N-1$ on 
the left hand side. Taking the maximum over $n$ on the left hand side we get 
\begin{align}
&\max_{\bur{2 \leq n \leq N}} \Big(  \|\partial_t v^{n-1}\|_L^2 
+ \| v^n\|^2_{a_h} + \| v^{n-1}\|^2_{a_h} \Big)
\leq 
2 \|\partial_t v^{0}\|_L^2  
+ \| v^1\|^2_{a_h} + \| v^0\|^2_{a_h} 
\\
&\qquad 
+   \tn r_1 \tn_{a_h,\bigstar} \max_{\bur{0 \leq  n \leq N}}  
\| v^n \|_{a_h} +   \| r_2 \|_{L,\bigstar}  \max_{1 \leq  n \leq N-1}  \| \partial_t v^n \|_{L}
\end{align} 
Finally, using a kick back argument we obtain
\bur{
 \begin{align}
&\frac12 \max_{2 \leq n \leq N-1} \Big( \|\partial_t v^{n-1}\|_L^2 
+ \| v^n\|^2_{a_h} + \| v^{n-1}\|^2_{a_h} \Big)
\leq 
2 (\|\partial_t v^{0}\|_L^2  
+ \| v^1\|^2_{a_h} + \| v^0\|^2_{a_h} )
\\
&\qquad 
+  \frac12 \tn r_1 \tn_{a_h,\bigstar}^2
 +   \frac12 \| r_2 \|_{L,\bigstar}^2
\end{align} 
}
which completes the proof.
\end{proof}

\subsection{Error Estimates}
\bur{
We will now combine the approximation properties and stability
estimates proved in the previous section to derive error estimates for
the cutFEM approximation. To simplify the notation we denote a
continuous function at a certain time level $t^n$, $v^n := v(t^n)$ and its
partial derivatives 
\begin{equation}
(d_t^m v)^n := \frac{\partial^m v}{\partial t^m}(t^n), \quad m \in \mathbb{N}_+
\end{equation}
for $m=1$ we will drop the superscript.

Before we derive the error estimates we recall the following
elementary results for the finite difference discretization in time.
}
\bur{
\begin{lem}\label{lem:Taylor}
For functions $v \in L^\infty(0,T;L^2(\Omega))$ there exists a positive constant
such that, if , $v^n:=v(t_n)$,
\begin{equation}
\|\partial_t^m v^n \|_L \lesssim \|d_t^m
v\|_{L^\infty(0,T;L^2(\Omega))}, \quad m\in \mathbb{N}_+
\end{equation}
and
\begin{equation}
\left(k \sum_{n=1}^{N-1} \|\partial_t^2 v^n - d_t^2 v^n\|^2\right)^{\frac12} \lesssim k^2 \|d_t^4 v\|_{L^2(0,T;L^2(\Omega))}
\end{equation}
\end{lem}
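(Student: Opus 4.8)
The plan is to treat the two bounds as classical finite-difference estimates, the first being a stability bound for divided differences and the second a $\mcO(k^2)$ consistency bound for the central second difference. A preliminary reduction is needed for the first inequality: the lumped norm $\|\cdot\|_L$ is only defined on $V_h^E$, and there it is equivalent to the plain $L^2$ norm. Indeed \eqref{eq:lump-stab} gives $\|w\|_{\mcTh}\lesssim\|w\|_L$ for $w\in V_h^E$, while the computation following \eqref{eq:lump-stab} (all entries of $\hatM_L$ are $\mcO(h^d)$) yields the converse, so $\|w\|_L\sim\|w\|$ on $V_h^E$. Since the lemma will be applied to finite element functions, this lets me replace $\|\cdot\|_L$ by $\|\cdot\|$ throughout and work entirely with the standard $L^2(\Omega)$ norm; the constants so produced depend neither on $k$, $N$ nor $h$.

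For the first estimate I would use the integral (Hermite--Genocchi) representation of the $m$-th order divided difference: $\partial_t^m v^n$ equals a weighted average of $d_t^m v$ over $[t_n,t_{n+m}]$ against a probability measure, the weights being nonnegative and summing to one. Applying Minkowski's integral inequality to pull the spatial $L^2$ norm inside the time integral, and bounding $\|d_t^m v(s)\|\le\|d_t^m v\|_{L^\infty(0,T;L^2(\Omega))}$ pointwise, the averaging measure then integrates to one and gives $\|\partial_t^m v^n\|\lesssim\|d_t^m v\|_{L^\infty(0,T;L^2(\Omega))}$ directly. (For $m=1$ this is simply $\partial_t v^n=k^{-1}\int_{t_n}^{t_{n+1}}d_t v\,ds$.)

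For the second estimate I would expand $v^{n\pm1}$ by Taylor's formula with integral remainder about $t_n$ to third order, so that the symmetric combination $v^{n+1}-2v^n+v^{n-1}$ has its odd-order terms cancel, leaving
\begin{equation}
\partial_t^2 v^n - d_t^2 v^n = \frac{1}{6k^2}\left[\int_{t_n}^{t_{n+1}}(t_{n+1}-s)^3 d_t^4 v(s)\,ds + \int_{t_n}^{t_{n-1}}(t_{n-1}-s)^3 d_t^4 v(s)\,ds\right].
\end{equation}
Bounding the kernels by $k^3$ and using Minkowski's integral inequality gives $\|\partial_t^2 v^n - d_t^2 v^n\|\lesssim k\int_{t_{n-1}}^{t_{n+1}}\|d_t^4 v(s)\|\,ds$. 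A Cauchy--Schwarz step in time then yields $\|\partial_t^2 v^n - d_t^2 v^n\|^2\lesssim k^3\int_{t_{n-1}}^{t_{n+1}}\|d_t^4 v\|^2\,ds$, and summing against the weight $k$ over $n=1,\dots,N-1$ (the consecutive intervals $[t_{n-1},t_{n+1}]$ overlap with bounded multiplicity) produces $k\sum_n\|\partial_t^2 v^n - d_t^2 v^n\|^2\lesssim k^4\|d_t^4 v\|_{L^2(0,T;L^2(\Omega))}^2$; taking square roots gives the claim.

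Neither step is genuinely difficult, and no result beyond standard Taylor analysis is required. The only points demanding care, and where I expect the bookkeeping to be the main obstacle, are keeping the power of $k$ exact in the second bound (the remainder carries $k^3$, two of which are consumed dividing by $k^2$, with the remaining $k$ combining with the Cauchy--Schwarz factor $k^{1/2}$ and the outer summation weight $k^{1/2}$ to assemble the final $k^2$), and the preliminary norm-equivalence justifying the use of $\|\cdot\|_L$ in the first inequality.
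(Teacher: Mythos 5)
Your proposal is correct and follows essentially the same route as the paper: both bounds come from Taylor expansion with integral remainder (your second-difference remainder formula with kernel $(t_{n\pm1}-s)^3/6$ is exactly the paper's), followed by Cauchy--Schwarz in time and summation over $n$ for the $\mcO(k^2)$ estimate. Your Hermite--Genocchi framing merely unifies the paper's case-by-case partial integration over $m$, and your preliminary equivalence $\| \cdot \|_L \sim \| \cdot \|$ on $V_h^E$ via (\ref{eq:lump-stab}) is a legitimate (and welcome) justification of a point the paper's proof passes over silently.
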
}
\bur{
\begin{proof}
We only prove the first
inequality in the case $m=2$, the cases $m=1$ and $m=3$ are similar. Using partial
integration we see that
\begin{align}
\partial_t^2 v^n &= \frac{1}{k^2} (v^{n+1} - 2 v^n + v^{n-1}) 
\\
&=
\frac{1}{k^2} \left(\int_{t^{n}}^{t^{n-1}}(t^{n-1}-t)\frac{\partial^2 u}{\partial
  t^2}(t) ~\mbox{d}t + \int_{t^{n}}^{t^{n+1}} (t^{n+1}-t) \frac{\partial^2 u}{\partial
  t^2}(t) ~\mbox{d}t\right)
\\
&\leq 2 \|d_t^m
v\|_{L^\infty(t^{n-1},t^{n};L^2(\Omega))}
\end{align}
Once again by partial integration it follows that
\begin{equation}
\partial_t^2 v^n - d_t^2 v^n  = \frac{1}{k^2} \left(\int_{t^{n}}^{t^{n-1}}\frac{(t^{n-1}-t)^3}{6}\frac{\partial^4 u}{\partial
  t^2}(t) ~\mbox{d}t + \int_{t^{n}}^{t^{n+1}} \frac{(t^{n+1}-t)^3}{6} \frac{\partial^4 u}{\partial
  t^2}(t) ~\mbox{d}t\right)
\end{equation}
Using Cauchy-Schwarz inequality in the right hand side we have
\begin{equation}
k^{-2} \int_{t^{n}}^{t^{n+1}} \frac{(t^{n+1}-t)^3}{6} \frac{\partial^4 u}{\partial
  t^2}(t) ~\mbox{d}t \leq \frac16 k^{\frac32} \|d_t^4 u\|_{{L^2(t^n,t^{n+1})}
  t^2}.
\end{equation}
Therefore
\begin{equation}
 \|\partial_t^2 v^n - d_t^2 v^n\|^2\lesssim k^{3} \|d_t^4
 u\|_{{L^2(t^n,t^{n+1}; L^2(\Omega))}}^2
\end{equation}
The claim then follows by summing over $n$, multiplying by $k$ and taking
square roots of both sides.
\end{proof}
}

\begin{thm} Let $u^{n+1}_h$, for $n=1,\dots, N-1,$ be defined by
  (\ref{eq:FEM-lump}) with initial data $u^0_h = R_h u^0$ and $u^1_h =
  R_h u^1$. Then \bur{if $u$ is a sufficiently smooth solution to \eqref{eq:wave}}, the following error estimates hold 
\begin{align}\label{eq:err-est-L2}
\bur{\|(d_t u)^{N-1} - \partial_t (u_h^{N-1})\| + \|u^{N} - u_h^{N}\|
+ \|u^{N-1} - u_h^{N-1} \|}
\lesssim h^2 + k^2
\end{align}
\begin{align}\label{eq:err-est-energy}
 \bur{\|\nabla (u^{N} - u_h^{N})\|+ \|\nabla (u^{N-1} - u_h^{N-1} )\|}
\lesssim h + k^2
\end{align}
\end{thm}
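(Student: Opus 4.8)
The strategy is the classical split of the error through the Ritz projection $R_h$, combined with the abstract discrete stability estimate \eqref{eq:stab}. Write the error as
\begin{equation}
u^n - u_h^n = (u^n - R_h u^n) + (R_h u^n - u_h^n) =: \rho^n + \theta^n.
\end{equation}
The first part $\rho^n$ is controlled directly by the Ritz projection estimate \eqref{eq:Rh-errorest}, giving $\|\rho^n\| \lesssim h^2\|u^n\|_{H^2}$ and $\|\nabla\rho^n\|\lesssim h\|u^n\|_{H^2}$, and the time derivative $\partial_t \rho^{n}$ is handled by commuting $R_h$ with the difference quotient and applying \eqref{eq:Rh-errorest} to $(d_t u)^n$ (using smoothness in time). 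The work is therefore to bound the discrete part $\theta^n \in V_h^E$, which satisfies a perturbed version of the scheme to which the stability lemma applies.

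The key step is to derive the \emph{perturbation equation} for $\theta^n$ and to identify the right hand side functional $r^n = r_1^n + r_2^n$ correctly. Testing the continuous equation \eqref{eq:wave} against $w \in V_h^E$ and using $a_h(R_h u^n,w)=a_h(u^n,w)$ together with the scheme \eqref{eq:FEM-lump}, I expect to find
\begin{align}
(\partial_t^2 \theta^n, w)_L + a_h(\theta^n,w)
&= \big((\partial_t^2 R_h u^n, w)_L - ((d_t^2 u)^n, w)\big) + (f^n - f_h^n, w)_L + \text{lumping terms},
\end{align}
for all $w \in V_h^E$. The delicate part is to organize these residual terms into the two functionals matching the norms in \eqref{eq:r-cont}: the $L^2$-type error $r_2$ collects the consistency error of $\partial_t^2$ versus $d_t^2$ (bounded by Lemma \ref{lem:Taylor} as $k^2$) and the source approximation, while $r_1$ must absorb the lumping error $(\cdot,\cdot)_L - (\cdot,\cdot)$ and the Ritz-in-time discrepancy $\partial_t^2 R_h u^n - R_h (d_t^2 u)^n$. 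The lumping term is exactly where the split into $r_1$ (controlled in $\|\cdot\|_{a_h,\bigstar}$) pays off: by \eqref{eq:lump-est} it contributes $h^2\|\nabla(\cdot)\|$, which is an $H^1$-type (hence $a_h$-dual) bound, and this is precisely why the scheme was written with the summation-by-parts machinery moving the time difference onto the functional.

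Once the functionals are identified, I would verify that $\tn r_1\tn_{a_h,\bigstar} \lesssim h^2 + k^2$ and $\tn r_2\tn_{L,\bigstar}\lesssim h^2 + k^2$, using \eqref{eq:Rh-errorest}, \eqref{eq:lump-est}, and Lemma \ref{lem:Taylor}, and check that the initial data contributions $\|\partial_t\theta^0\|_L$, $\|\theta^0\|_{a_h}$, $\|\theta^1\|_{a_h}$ are $O(h^2+k^2)$; since $u_h^0 = R_h u^0$ and $u_h^1 = R_h u^1$, one has $\theta^0=\theta^1=0$, so only $\partial_t\theta^0$ survives and must be estimated via the time-consistency of $R_h$. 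Feeding these into \eqref{eq:stab} yields $\max_n(\|\partial_t\theta^{n-1}\|_L + \|\theta^n\|_{a_h}) \lesssim h^2 + k^2$. Finally, the two stated estimates \eqref{eq:err-est-L2} and \eqref{eq:err-est-energy} follow by the triangle inequality, combining the $\theta$ bounds with the Ritz estimates for $\rho$: the $L^2$ norm of $\theta$ is controlled by $\|\theta\|_L$ via \eqref{eq:lump-stab}, giving the $h^2$ rate, while the gradient picks up the $\|\theta\|_{a_h}\sim\tn\theta\tn_h$ bound together with $\|\nabla\rho\|\lesssim h$, giving the reduced $h$ rate in energy.

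\textbf{Main obstacle.} The hard part will be the careful bookkeeping of the perturbation equation so that every residual term lands in the functional whose dual norm can actually absorb it — in particular routing the lumping error into $r_1$ (not $r_2$) so that its $h^2\|\nabla\cdot\|$ bound is measured in $\|\cdot\|_{a_h,\bigstar}$ rather than $\|\cdot\|_{L,\bigstar}$, since a naive $L^2$ estimate of the lumping term would not close. Managing the noncommutativity of $R_h$ with the discrete time derivative, and confirming the summation-by-parts step does not generate uncontrollable mixed space-time derivatives, is the technically subtle point that the abstract stability framework was specifically engineered to handle.
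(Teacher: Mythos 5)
Your proposal follows the paper's proof essentially step for step: the Ritz-projection split $u^n-u_h^n=\rho^n+\theta^n$, the perturbation equation for $\theta$ with residual $r^n=r_1^n+r_2^n$ split so that the lumping errors (which by \eqref{eq:lump-est} produce $h^2\|\nabla\cdot\|$-type bounds) land in the $\tn\cdot\tn_{a_h,\bigstar}$-controlled part $r_1$ while the time-consistency terms are handled via Lemma \ref{lem:Taylor}, then the stability estimate \eqref{eq:stab} with $\theta^0=\theta^1=0$ and a final triangle inequality using \eqref{eq:Rh-errorest} and \eqref{eq:lump-stab}. Two harmless bookkeeping differences: the paper puts the source terms $(f^n,v)-(f_h^n,v)_L$ in $r_1$ and the discrepancy $\partial_t^2 R_h u^n-(d_t^2 R_h u)^n$ in $r_2$ (partly opposite to your routing, though yours would also close given one extra time derivative of regularity), and since $\theta^0=\theta^1=0$ you get $\partial_t\theta^0=0$ immediately, so no separate estimate of that initial term is needed.
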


\begin{proof}
We first note that the exact solution satisfies 
\begin{equation}\label{eq:err-a}
((d_t^2 u)^n , v) + a_h(u^n,v ) =  (\bur{f^n},v^n) \qquad \forall v\in V_h, t \in (0,T)
\end{equation}
and for $n=1,\dots,N-1$, the numerical scheme satisfies
\begin{equation}\label{eq:err-b}
(\partial_t^2 u^n_h , v)_L + a_h(u^n_h,v ) = (\bur{f_h^n},v^n)_L \qquad \forall v \in \bur{V_h^E}
\end{equation}
Subtracting the two equations we obtain the error equation
\begin{align}\label{eq:err-c}
 ( (d_t^2 u)^n , v) - (\partial_t^2  u^n_h, v)_L  + a_h(u^n - u^n_h,v)  
 = \bur{(f^n,v) - (f_h^n,v)_L}
  \qquad \forall v \in \bur{V_h^E}
\end{align}
In order to estimate the error we split it into two contributions using the Ritz projection,
\begin{equation}\label{eq:err-d}
u^n - u_h^n = u^n - R_h u^n + R_h u^n - u_h^n = \rho^n + \theta^n
\end{equation}
\bur{In the standard manner we then split the norms in the left hand
  side of \eqref{eq:err-est-L2} and \eqref{eq:err-est-energy} using
  the triangle inequality in the contributions from $\rho^n$ and
  $\theta^n$, $\|u^n - u_h^n\| \leq \| \rho^n\| + \|\theta^n\|$.}
In the following paragraphs we estimate the two contribution to the error emanating from the interpolation error $\rho$ and the discrete part of the error $\theta$. The $\rho$ contribution can be directly estimated using the error estimates (\ref{eq:Rh-errorest}) 
for the Ritz projection. For the $\theta$ contribution we derive an error equation with 
a right hand side that acccounts for the lumping error and the error in the difference approximation of the second order time derivative. The bound for $\theta$ is then obtained 
by applying the stability estimate (\ref{eq:stab}) followed by a priori bounds for the right 
hand side.

\paragraph{The $\boldsymbol{\rho}$ Contribution.}
Applying the error estimate (\ref{eq:Rh-errorest})  for the Ritz projection we have the estimates
\begin{align}\label{eq:err-e}
\| \rho^n \| &\lesssim h^2 \|u^n \|_{H^2(\Omega)}
\\ \label{eq:err-f}
\| \bur{ (d^m_t \rho)^n }\| &\lesssim h^2 \|(\bur{d^m_t u})^n
                            \|_{H^2(\Omega)}, \quad \bur{m=1,2,3}
\\ \label{eq:err-g}
\| \rho^n \|_{a_h}  &\lesssim h \|u^n \|_{H^2(\Omega)}
\end{align} 
where we used the commutation $\bur{(d_t R_h v)^n = R_h (d_t v)^n}$.

\paragraph{The $\boldsymbol{\theta}$ Contribution.}
We note that we have the identity 
\begin{align}\label{eq:err-h}
 ( (d_t^2 u)^n , v) - (\partial_t^2  u^n_h, v)_L  
&=  
 ( (d_t^2 u)^n , v) 
 -(\partial_t^2 (R_h u^n), v)_L  
+ (\partial_t^2 (R_h u - u_h)^n, v)_L  
\\ \label{eq:err-i}
&=  ( (d_t^2 u)^n , v) 
 - (\partial_t^2 (R_h u^n), v)_L   + (\partial_t^2 \theta^n,v)_L
\end{align}
 and using the orthogonality of $R_h$,
\begin{align}\label{eq:err-k}
a_h(u^n - u^n_h,v)  = a_h^n ( \rho, v) + a_h(\theta,v) = a_h(\theta, v)
\end{align}
Combining (\ref{eq:err-h}), (\ref{eq:err-i}), and (\ref{eq:err-k}), we get 
the following error equation for the discrete part $\theta$ of the error
\begin{align}\label{eq:err-l}
(\partial_t^2  \theta^n, v)_L  + a_h(\theta^n,v)  
=\underbrace{ (f^n,v) - (f^n,v)_L+  (\partial_t^2 (R_h u^n), v)_L  
-( (d_t^2 u)^n , v)}_{ r^n(v) }
\end{align}
where we introduced the functional $r^n: V_h \rightarrow \IR$. We now split $r^n$, by 
adding and subtracting suitable term, in order to apply a stability bound of the form 
(\ref{eq:r-cont}), 
\begin{align}\label{eq:err-m}
r^n(v)
&=   (f^n,v) - (f^n_h,v)_L+
(\partial_t^2 R_h u^n , v)_L -  ( (\partial_t^2 u)^n , v)
\\
&=  \underbrace{(f^n,v)- (f^n_h,v)_L + (\partial_t^2 R_h u^n , v)_L -  (\partial_t^2 R_h u^n , v) }_{r^n_1(v)}
\\
&\qquad \qquad +
\underbrace{ (\partial_t^2 R_h u^n , v)  -  ((d_t^2 R_h u)^n , v) 
+
((d_t^2 R_h u)^n , v)   - ( (d_t^2 u)^n , v)}_{r^n_2(v)}
\\
&=r_1^n(v) + r_2^n(v)
\end{align} 
where we have collected the terms associated with the lumping error in $r_1$ and the 
remaining terms in $r_2$. Below will prove
the following bounds on the residuals $r_1$ and $r_2$. 
\bur{
\begin{align}
\label{eq:err-p}
\tn r_{1} \tn_{a_h,\bigstar}  &\lesssim  h^2(\|u\|_{W^{3,\infty}(0,T;H^1(\Omega))}+\|f\|_{W^{1,\infty}(0,T;H^2(\Omega))})
\\[3mm]
\label{eq:err-o}
\tn r_2 \|_{L,\bigstar}  &\lesssim 
 k^2 \|d_t^4 u\|_{L^2(0,T;L^2(\Omega)} + h^2 \|u\|_{W^{2,\infty}(0,T;H^2(\Omega))}
\end{align}
Here we have
omitted higher order terms.
}
\bur{Anticipating the approximation error estimates \eqref{eq:err-p} and \eqref{eq:err-o}
we may use the stability estimate (\ref{eq:stab}),} where $\theta^0 = \theta^1 =0$ since 
$u^0_h = R_h u^0$ and $u^1_h = R_h u^1$, to obtain 

\begin{align}
&\max_{1 \leq n \leq N-1} \Big( \|\partial_t \theta^{n-1}\|_L^2 
+ \| \theta^n\|^2_{a_h} + \| \theta^{n-1}\|^2_{a_h} \Big) \lesssim
\tn r_1 \tn^2_{a_h,\bigstar} +   \| r_2 \|^2_{L,\bigstar}  
\\
&\qquad \lesssim
h^4 \left( \|u\|_{W^{3,\infty}(0,T;H^1(\Omega))}+\|f\|_{W^{1,\infty}(0,T;H^2(\Omega))}\right)^2
\\
&
\qquad\qquad + \left( \sum_{n=1}^{N-1} k h^2 \| \partial_t^2 u^n\|_{H^2(\Omega)} 
+ k^3 \| \partial_t^4 u \|_{L^\infty(L^2(\Omega))} \right)^2
\\
&\qquad \lesssim (h^2 + k^2)^2
\end{align}

\paragraph{Verification of (\ref{eq:err-p}).} Starting from the definition 
(\ref{eq:norm-dual-ah}),
\begin{align}\label{eq:rep_dual}
\tn r_1 \tn_{a_h,\bigstar} &=  \|r_1^{N} \|_{a_h,\bigstar} + \|r_1^{N-1} \|_{a_h,\bigstar}  
+  \|r_1^{1} \|_{a_h,\bigstar} + \|r_1^{0} \|_{a_h,\bigstar} 
+ \sum_{n=0}^{N-1} k \|\partial_t r^n_1\|_{a_h,\bigstar}
\end{align}
with 
\begin{align}
r_1^n(v)&=
\underbrace{(f^n,v)- (f^n_h,v)_L}_{I} +\underbrace{ (\partial_t^2 R_h u^n , v)_L -  (\partial_t^2 R_h u^n , v)}_{II}
\end{align}
\bur{We start with estimates of the first four terms in the right hand side of
  (\ref{eq:norm-dual-ah}), by considering an arbitrary $n$. By adding
  and subtracting $(f^n_h,v)$ we have
\begin{equation}
I =  (f^n,v) - (f^n_h,v)_L = (f^n,v) - (f^n_h,v)+(f^n_h,v) - (f^n_h,v)_L.
\end{equation}
Assuming that $f^n_h$ has optimal approximation properties we see that
\begin{equation}
 (f^n,v) - (f^n_h,v) \lesssim h^2 \|f^n\|_{H^2(\Omega)} \| v \|_{a_h}
\end{equation}
where we used the Poincar\'e inequality $\|v\| \lesssim \| v
\|_{a_h}$.
For the second term and term $II$ we apply Lemma \ref{eq:lump-est} to obtain
\begin{equation}
(f^n_h,v) - (f^n_h,v)_L \lesssim h^2 \|\nabla f_h^n\| \| v
\|_{a_h} \lesssim h^2 ( \|\nabla f^n\| + h\|f^n\|_{H^2(\Omega)}) \| v \|_{a_h}
\end{equation}
and
\begin{equation}
(\partial_t^2 R_h u^n,v) - (\partial_t^2 R_h u^n,v)_L \lesssim h^2 \|\nabla \partial_t^2 R_h u^n\| \| v \|_{a_h}
\end{equation}
Applying the first inequality of Lemma \ref{lem:Taylor}, adding and subtracting $\nabla d_t^2
u^n$ and applying approximation shows that 
\begin{equation}
 \|\nabla \partial_t^2 R_h u^n\| \leq \|\nabla d_t^2 \rho^n\| +
 \|\nabla d_t^2 u^n\| \lesssim \|\nabla d_t^2 u^n\|   + h \|d_t^2
 u^n\|_{H^2(\Omega)}
\end{equation}
To sum up we have (neglecting  higher order terms)
\begin{align}
&\|r_1^{N} \|_{a_h,\bigstar} + \|r_1^{N-1} \|_{a_h,\bigstar}  
+  \|r_1^{1} \|_{a_h,\bigstar} + \|r_1^{0} \|_{a_h,\bigstar}  
\\
&\qquad \lesssim h^2(\|u\|_{W^{2,\infty}(0,T;H^1(\Omega))}+\|f\|_{L^{\infty}(0,T;H^2(\Omega))})
\end{align}
To control the last term in the right hand side of
\eqref{eq:rep_dual}, we simply apply the above arguments to
$\partial_t f^n$, $\partial_t f_h^n$ and $\partial_t \partial_t^2 R_h
u^n$. This results in similar bounds, but with an additional time
derivative.
\begin{align}
 \sum_{n=0}^{N-1} k \|\partial_t r^n_1\|_{a_h,\bigstar} 
 &\lesssim k \sum_{n=0}^{N-1}
 h^2(\|u\|_{W^{3,\infty}(t^{n},t^{n+1};H^1(\Omega))}+\|f\|_{W^{1,\infty}(t^{n},t^{n+1};H^2(\Omega))}) 
 \\
& \lesssim h^2(\|u\|_{W^{3,\infty}(0,T;H^1(\Omega))}+\|f\|_{W^{1,\infty}(0,T;H^2(\Omega))})
\end{align}

\paragraph{Verification of (\ref{eq:err-o}).} 
We recall the definition \eqref{eq:norm-dual-L}
\begin{align}\label{{eq:norm-dual-L_rep}}
\tn r_2 \tn_{L,\bigstar} = \sum_{n=1}^{N-1} 2k \|r^n_2 \|_{L,\bigstar}
\end{align}
Each $r^n_2$ in the right hand side can be bounded as follows. Using
the stability of $v_h \in V_h^E$ we see that for all $w\in L^2(\Omega)$,
\begin{equation}
(w,v_h) \leq \|w\| \|v_h\| \lesssim \|w\| \|v_h\|_L
\end{equation}
in particular
\begin{align}
&(\partial_t^2 R_h u^n- (d_t^2 R_h u)^n , v) 
+
((d_t^2 R_h u)^n - (d_t^2 u)^n , v) 
\\
&\qquad
\lesssim (\|\partial_t^2 R_h u^n -  (d_t^2 R_h u)^n\|+\|
((d_t^2 R_h u)^n - (d_t^2 u)^n\|) \|v_h\|_L
\end{align}
By
the definition of $r^n_2$ we then have
\begin{equation}
\tn r_2 \tn_{L,\bigstar}=  \underbrace{2k \sum_{n=1}^{N-1}
  \|\partial_t^2 R_h u^n -  (d_t^2 R_h u)^n\|}_{I}
+\underbrace{2k \sum_{n=1}^{N-1}\|(d_t^2 \rho)^n\|}_{II}
\end{equation}
The term $I$ is bounded using the second inequality of Lemma
\ref{lem:Taylor} and then, since we have not proved $L^2$-stability of
$R_h$, we add and subtract $d_t^4 u$, use the triangle inequality and
the inequality \eqref{eq:err-f}
\begin{align}
I &\lesssim k^2 \|d_t^4 R_h u\|_{L^2(0,T;L^2(\Omega)} 
\\
&\lesssim k^2
(\|d_t^4 u\|_{L^2(0,T;L^2(\Omega)}+\|d_t^4 (u - R_h
u)\|_{L^2(0,T;L^2(\Omega)})
\\
&\lesssim  k^2
(\|d_t^4 u\|_{L^2(0,T;L^2(\Omega)}+h^2 \|d_t^4 u\|_{L^2(0,T;H^2(\Omega)})
\end{align}
For $II$ we apply \eqref{eq:err-f} and take the max over the
time levels to obtain
\begin{equation}
2k \sum_{n=1}^{N-1}\|(d_t^2 \rho)^n\| \lesssim h^2 \|u\|_{W^{2,\infty}(0,T;H^2(\Omega))}
\end{equation}
We conclude that, omitting high order terms we have, as claimed,
\begin{equation}
\tn r_2 \tn_{L,\bigstar} \lesssim k^2 \|d_t^4 u\|_{L^2(0,T;L^2(\Omega)} + h^2 \|u\|_{W^{2,\infty}(0,T;H^2(\Omega))}
\end{equation}
}
\end{proof}

\section{Numerical Examples}

In the numerical examples below, we use the following implementation of the extension operator. The mapping $S_h$ is constructed by associating with each element $T\in \mcT_h \setminus \mcT_{h,I}$ the element $S$ in $\mcT_{h,I}$ which minimizes the distance 
between the element centroids. For each $x \in \mcX_h \setminus \mcX_{h,I}$ the weights in the nodal average 
$\langle \cdot \rangle_x$, see (\ref{eq:average-nodal}), is taken to be $1$ on precisely one element $T_x \in \mcT_h(x)$ and 
zero on all elements in $\mcT_h(x) \setminus T_x$, where we recall that $\mcT_h(x)$ is the set of elements which has $x$ as a vertex. Note that this choice of weights corresponds to simply 
defining the nodal value in $x \in \mcX_h \setminus \mcX_{h,I}$ by $((F_h v)|_{T_x})|_x$, where 
$F_h$ is defined in (\ref{eq:Fh}). This particular implementation has the advantage that it introduces relatively few non zero elements in the mass and stiffness matrix. The Nitsche parameter was set to $\gamma =10$ in all computations and the initial data is the extension of nodal interpolant in interior nodes. 

\subsection{Space-Time Convergence}

On the disc $\Omega = \{ r: \; r < 0.5\}$, $r=\sqrt{x^2 +y^2}$, we consider a problem with manufactured solution
\begin{equation}
u =(1-4r^2)\cos{(\omega t)}
\end{equation}
corresponding to the right hand side
\begin{equation}
f=(4\omega^2 r^2 - \omega^2 + 16)\cos{(\omega t)}
\end{equation}
with $\omega=2\pi$. We solve this problem over one period, i.e., with $T=1$. The timestep $k$ is coupled to the meshsize $h$ by $k\sim h$. On our inital mesh $h=2.69\times 10^{-2}$ and $k=\pi/2000 \approx 1.57\times 10^{-3}$. 

In Figure \ref{fig:elevoneperiod} we show the solution (on the third mesh in a sequence of halving the meshsize) after one period, and in Figure \ref{fig:convtimespace}
we show the convergence at time $T$ in $L_2(\Omega)$ and in $H^1(\Omega)$. The expected convergence of $O(h^2)$ is attained in $L_2$ and $O(h)$ in $H^1$.

\subsection{Dirichlet vs. Neumann}

In this example we show the effect of a pulse approaching the boundary for zero Dirichlet boundary conditions and for zero Neumann boundary conditions.
The domain is the same as in the previous example, we set $h=7\times 10^{-3}$, $k=3.93\times 10^{-4}$. The initial solution is given by 
\begin{equation}
u(r,0) =1+\cos(\pi r/r_0)\;\;\text{if $ r< r_0$}, u(r,0)=0\;\;\text{elsewhere, and}\; \partial_t u =0
\end{equation}
with $r_0 = 0.2$. An interpolated initial condition on the computational mesh is shown in Figure \ref{fig:initial}. In Figure \ref{fig:solD}
we show the Dirichlet solution after $t=0.35$ and $t=0.4$, and in Figure \ref{fig:solN} we show the Neumann solution at the same times.
The method can clearly handle both hard and soft boundary conditions without modification.

\subsection{Increasing Frequency}

Here we show the effect of a pulse with decreasing support approaching the boundary. 
Our domain is $(-0.81,0.79)\times(-0.8,0.8)$ and has Neumann boundary conditions on the uncut boundaries $y=\pm 0.8$. On the uncut boundary $x=-0.81$ we impose Dirichlet conditions strongly, and on the cut boundary at $x=0.79$ we impose zero Dirichlet boundary conditions weakly. In Fig. \ref{fig:cut} we show how the mesh is cut in a closeup.
We set $h=8.9\times 10^{-3}$, $k=3.93\times 10^{-4}$. The initial solution is given by 
\begin{equation}
u(x,y) =(1+\cos(\pi \vert x+0.01\vert/d_0)\;\;\text{if $ \vert x+0.01\vert< d_0$}, u(x,y)=0\;\;\text{elsewhere}
\end{equation}
and $\partial_t u =0$,
with different $d_0$. This pulse splits into two, one going left and hitting the uncut boundary, one going right and hitting the cut boundary.
We show snapshots of the solutions different times and for different $d_0$ in Figs. \ref{fig:pulsestart}--\ref{fig:pulseend}. Note the dispersion error
which becomes more pronounced as $d_0$ decreases. The difference in quality of the solution at the uncut and cut boundaries boundary is small
and does not become more pronounced as the support of the pulse decreases. We note that as the frequency increases, the meshsize must (eventually) be
decreased to avoid dispersion errors, which means the weak Dirichlet data will also be resolved better. 

\bigskip
\paragraph{Acknowledgements.}
This research was supported in part by the Swedish Foundation
for Strategic Research Grant No.\ AM13-0029, the Swedish Research
Council Grants Nos.\  2013-4708, 2017-03911, and the Swedish
Research Programme Essence. EB was supported in part by the EPSRC grant EP/P01576X/1.

\bibliographystyle{abbrv}
\footnotesize{
\bibliography{ref}
}

\bigskip
\bigskip
\noindent
\footnotesize {\bf Authors' addresses:}

\smallskip
\noindent
Erik Burman,  \quad \hfill \addressuclshort\\
{\tt e.burman@ucl.ac.uk}

\smallskip
\noindent
Peter Hansbo,  \quad \hfill \addressjushort\\
{\tt peter.hansbo@ju.se}

\smallskip
\noindent
Mats G. Larson,  \quad \hfill \addressumushort\\
{\tt mats.larson@umu.se}

\newpage

\begin{figure}[ht]
	\begin{center}
		\includegraphics[scale=0.20]{./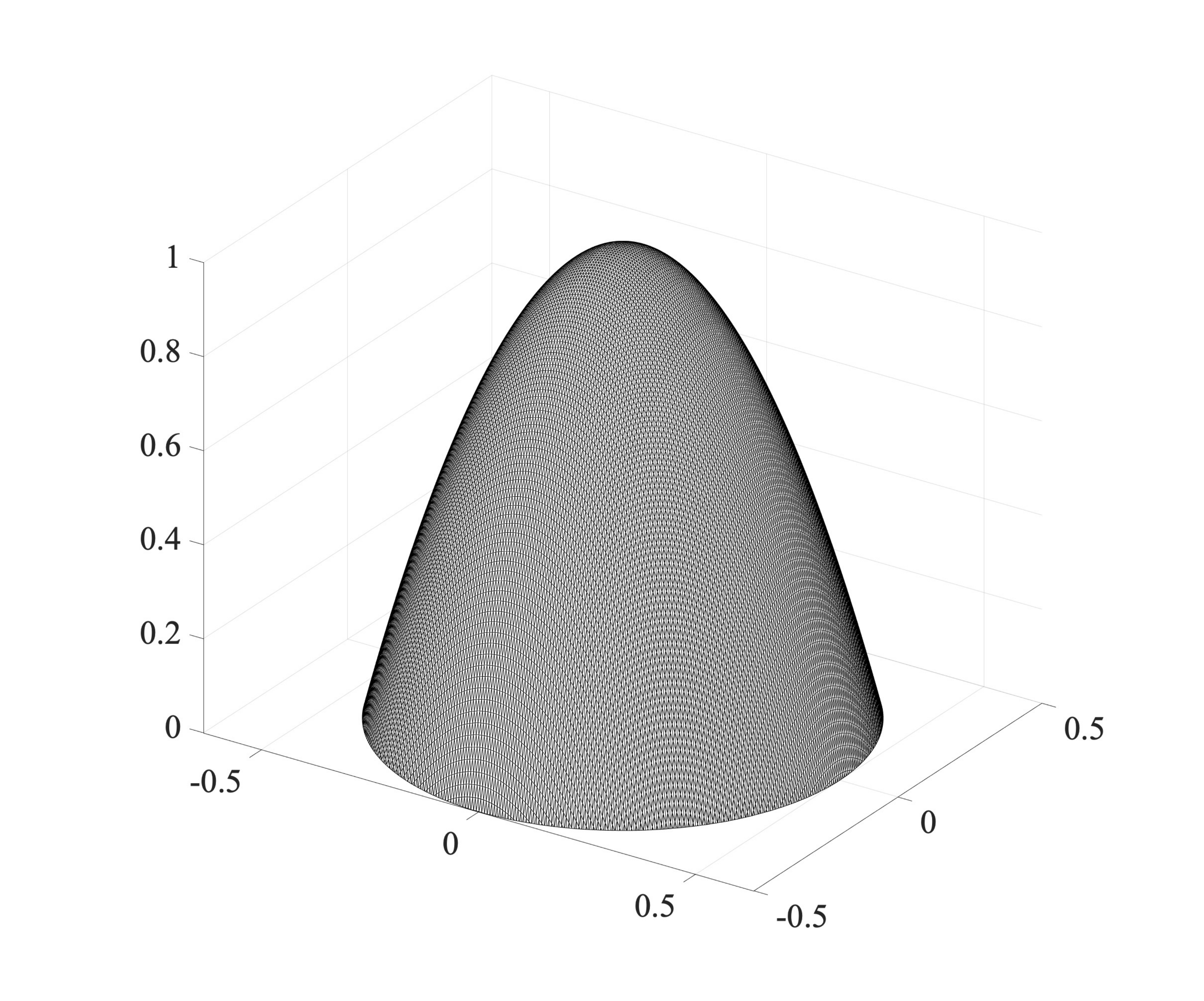}
	\end{center}
	\caption{Elevation of the computed solution on a particular mesh.}
\label{fig:elevoneperiod}
\end{figure}

\begin{figure}[ht]
	\begin{center}
		\includegraphics[scale=0.20]{./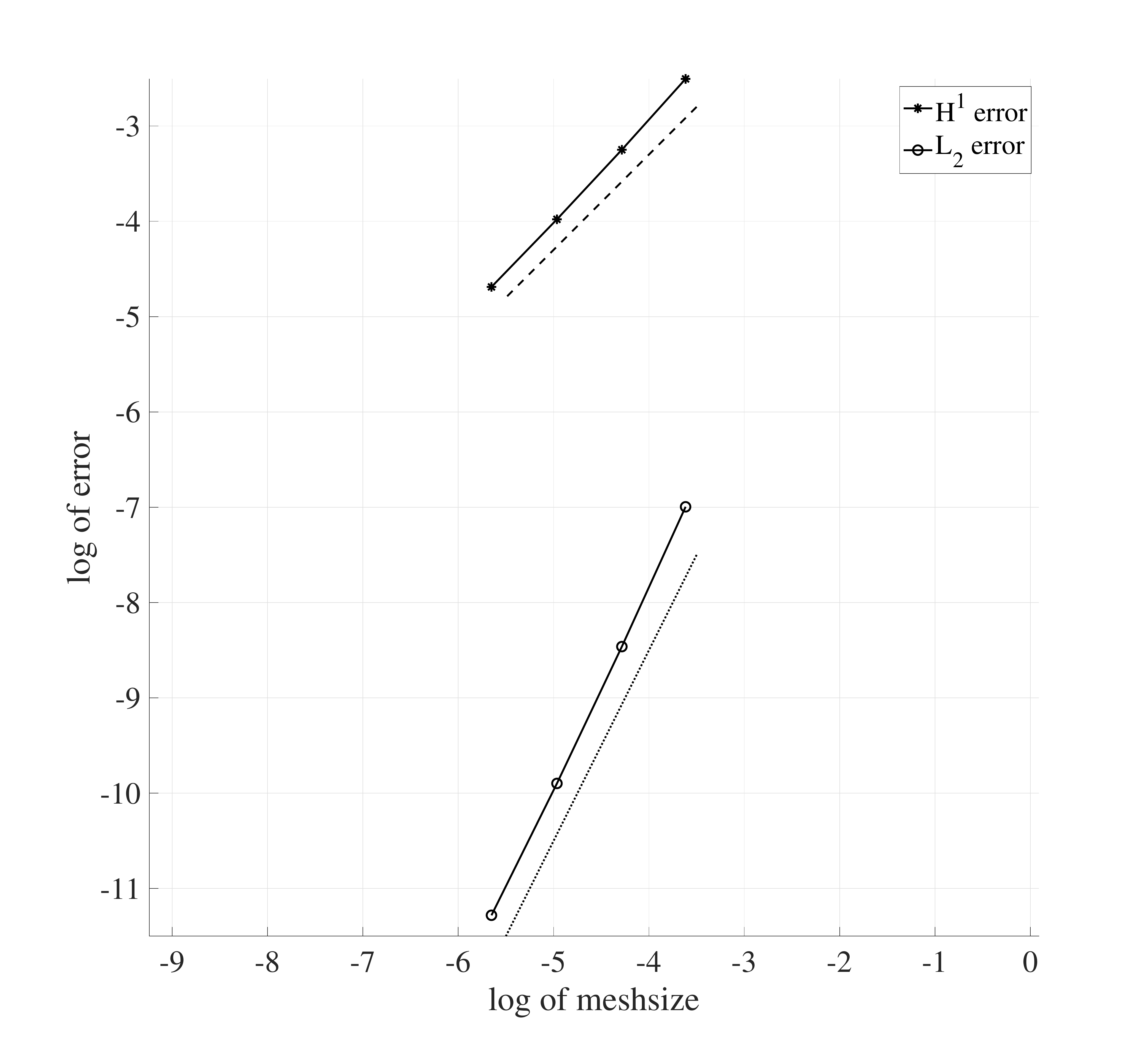}
	\end{center}
	\caption{Convergence at time $T=1$. Dashed line has inclination 1:1, dotted line has inclination $2:1$.}
	\label{fig:convtimespace}
\end{figure}

\begin{figure}[ht]
	\begin{center}
		\includegraphics[scale=0.25]{./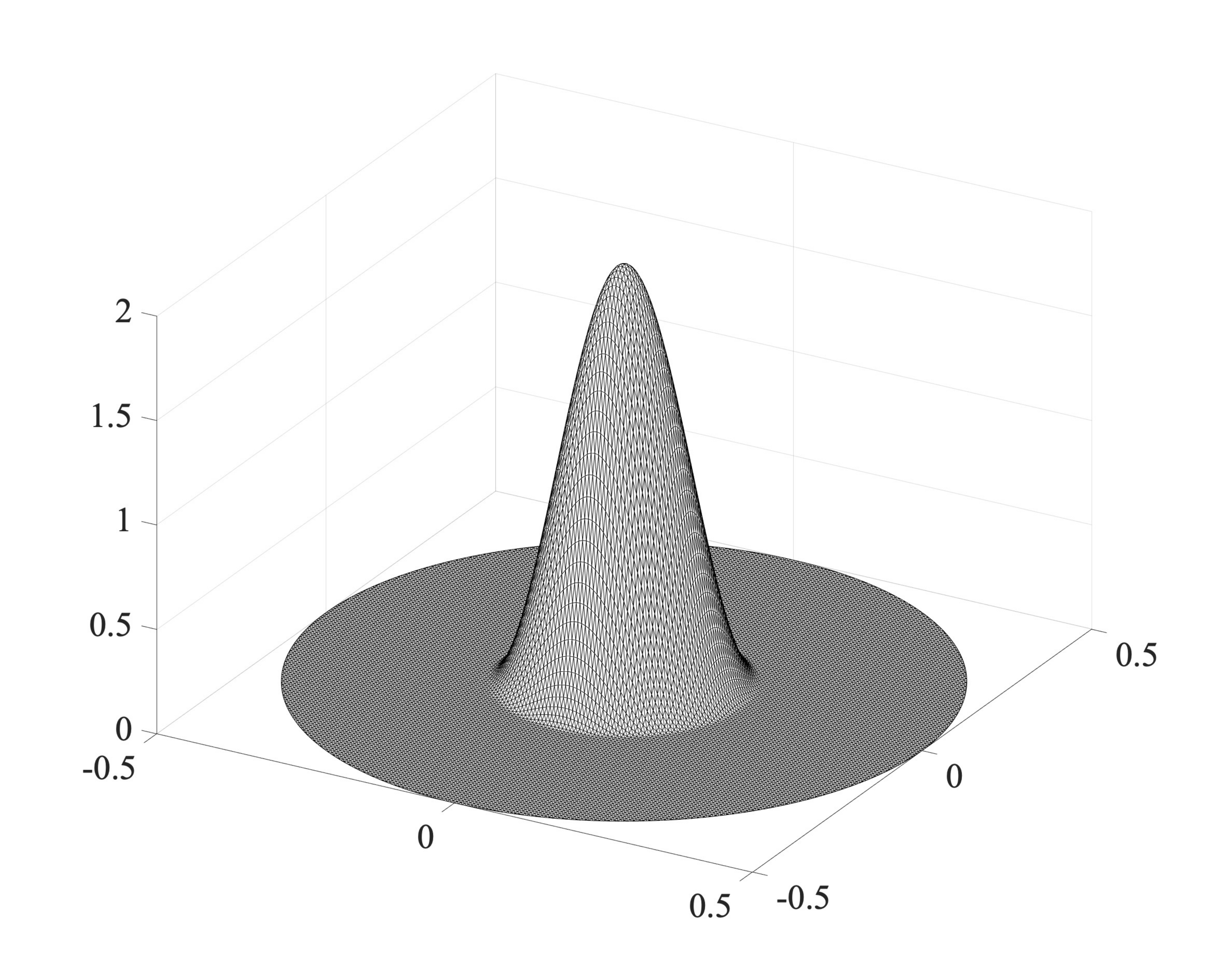}
	\end{center}
	\caption{Initial pulse.}
	\label{fig:initial}
\end{figure}

\begin{figure}[ht]
	\begin{center}
		\includegraphics[scale=0.18]{./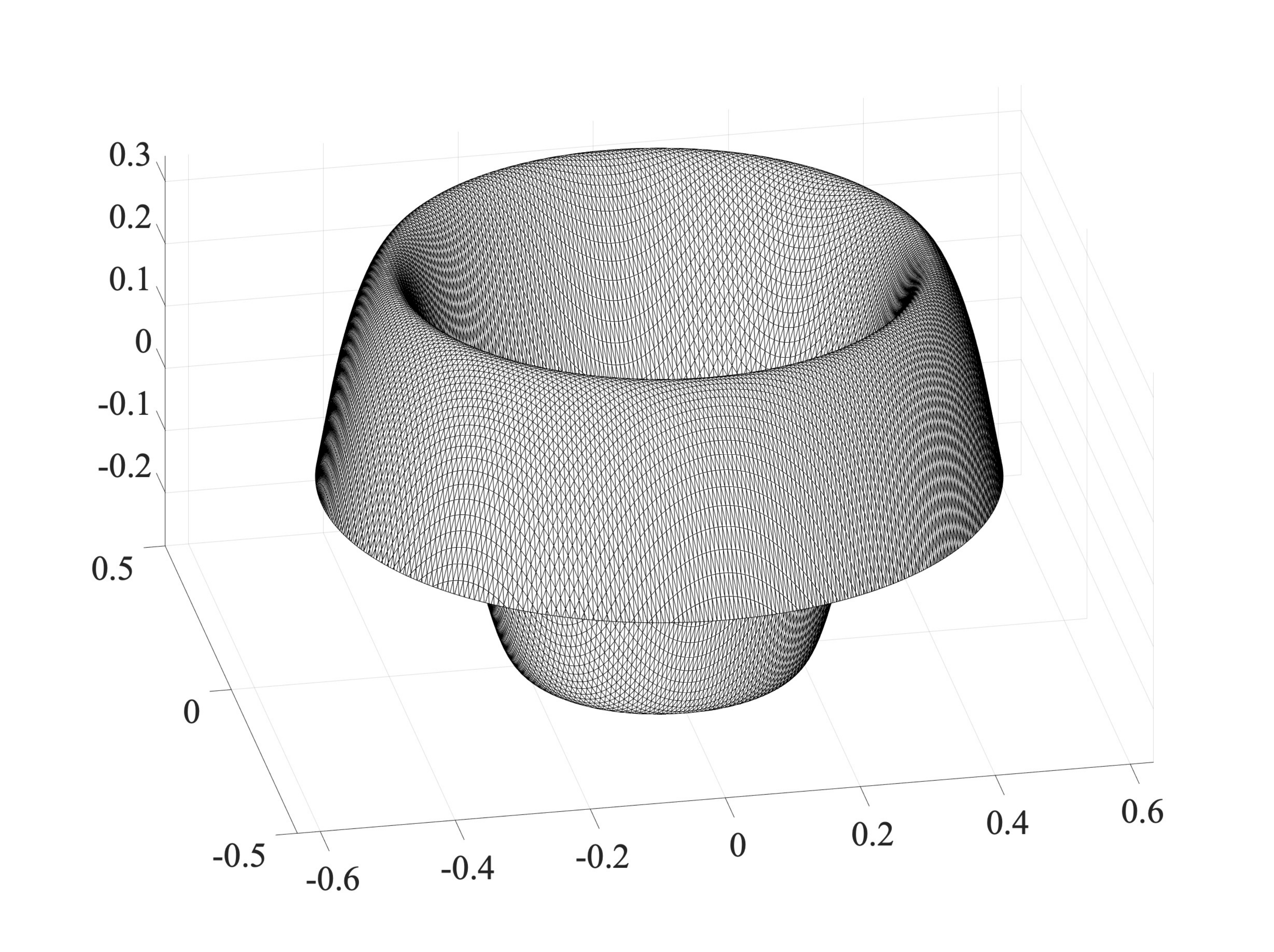}\includegraphics[scale=0.17]{./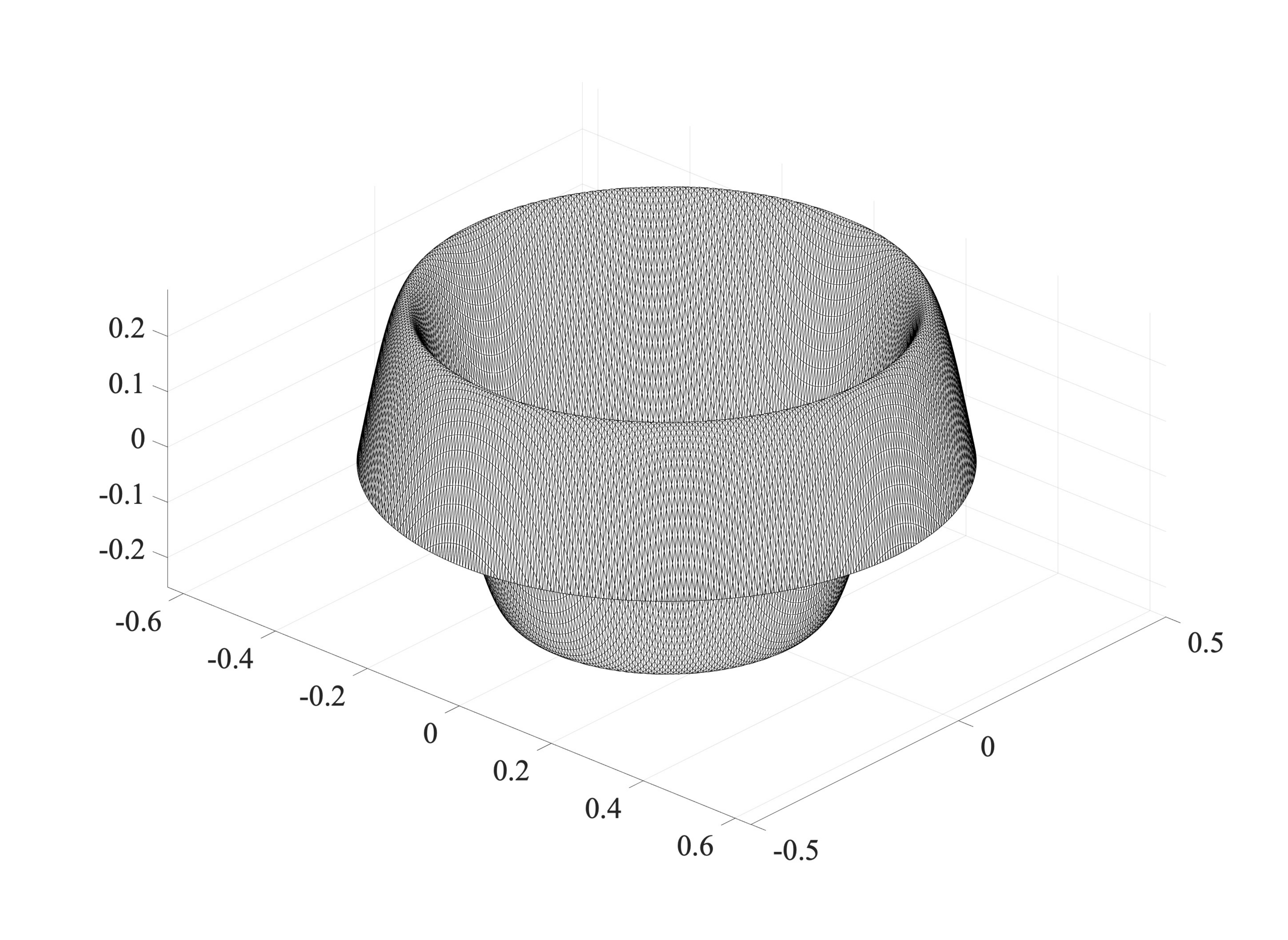}
	\end{center}
	\caption{Pulse at $t=0.35$ (left) and $t=0.4$ (right) for the Dirichlet problem.}
	\label{fig:solD}
\end{figure}

\begin{figure}[ht]
	\begin{center}
		\includegraphics[scale=0.20]{./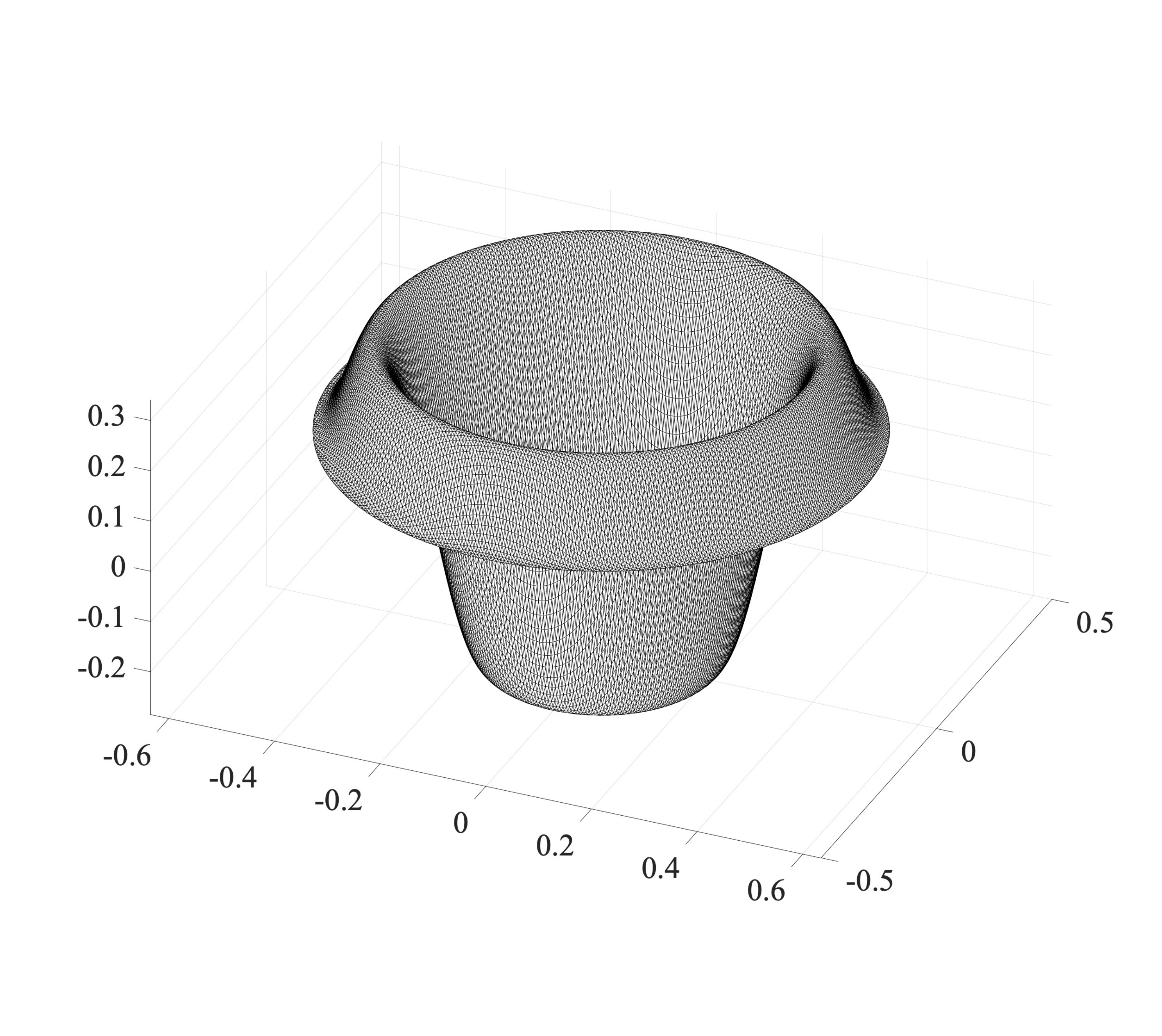}\includegraphics[scale=0.18]{./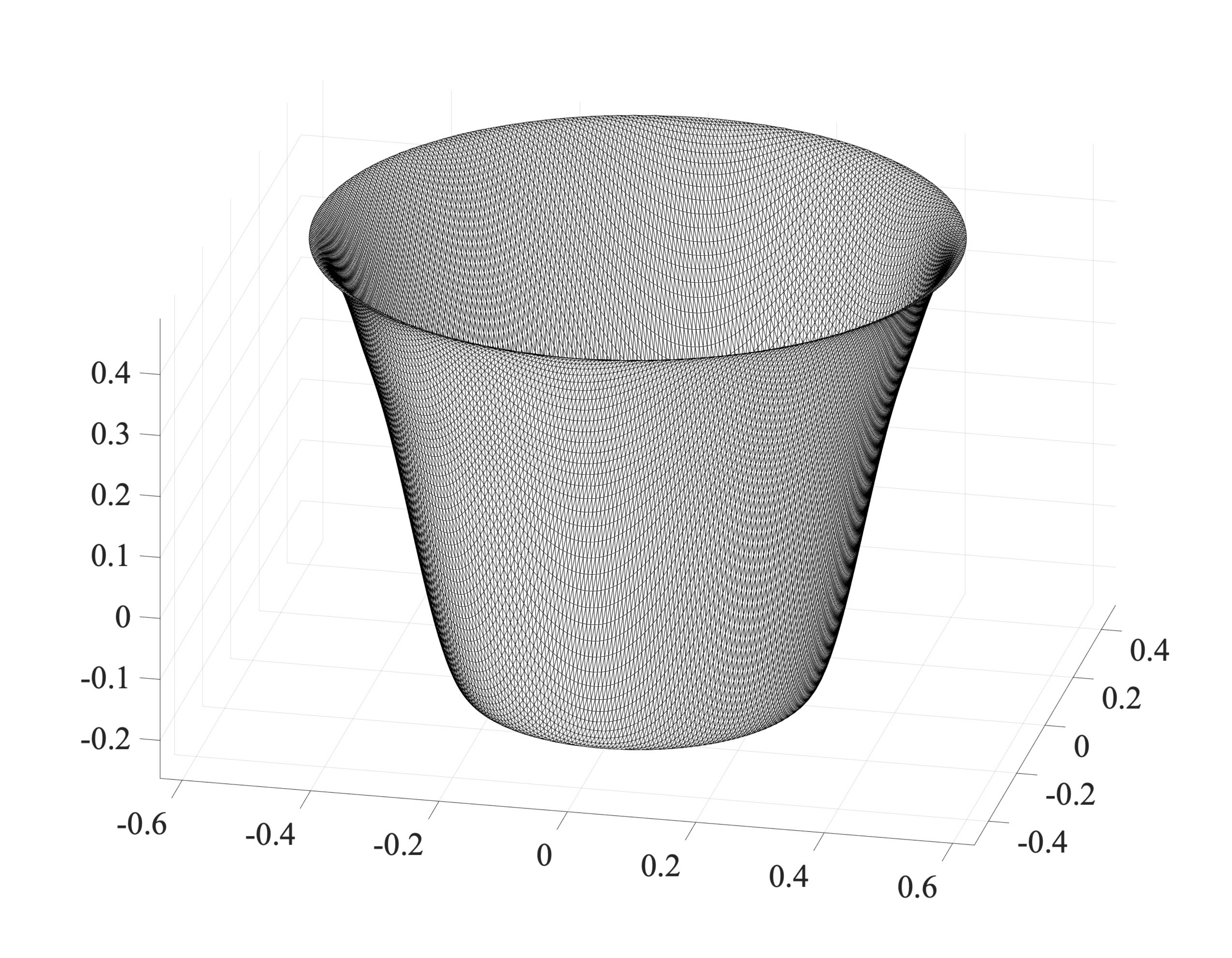}
	\end{center}
	\caption{Pulse at $t=0.35$ (left) and $t=0.4$ (right) for the Neumann problem.}
	\label{fig:solN}
\end{figure}

\begin{figure}[ht]
	\begin{center}
		\includegraphics[scale=0.20]{./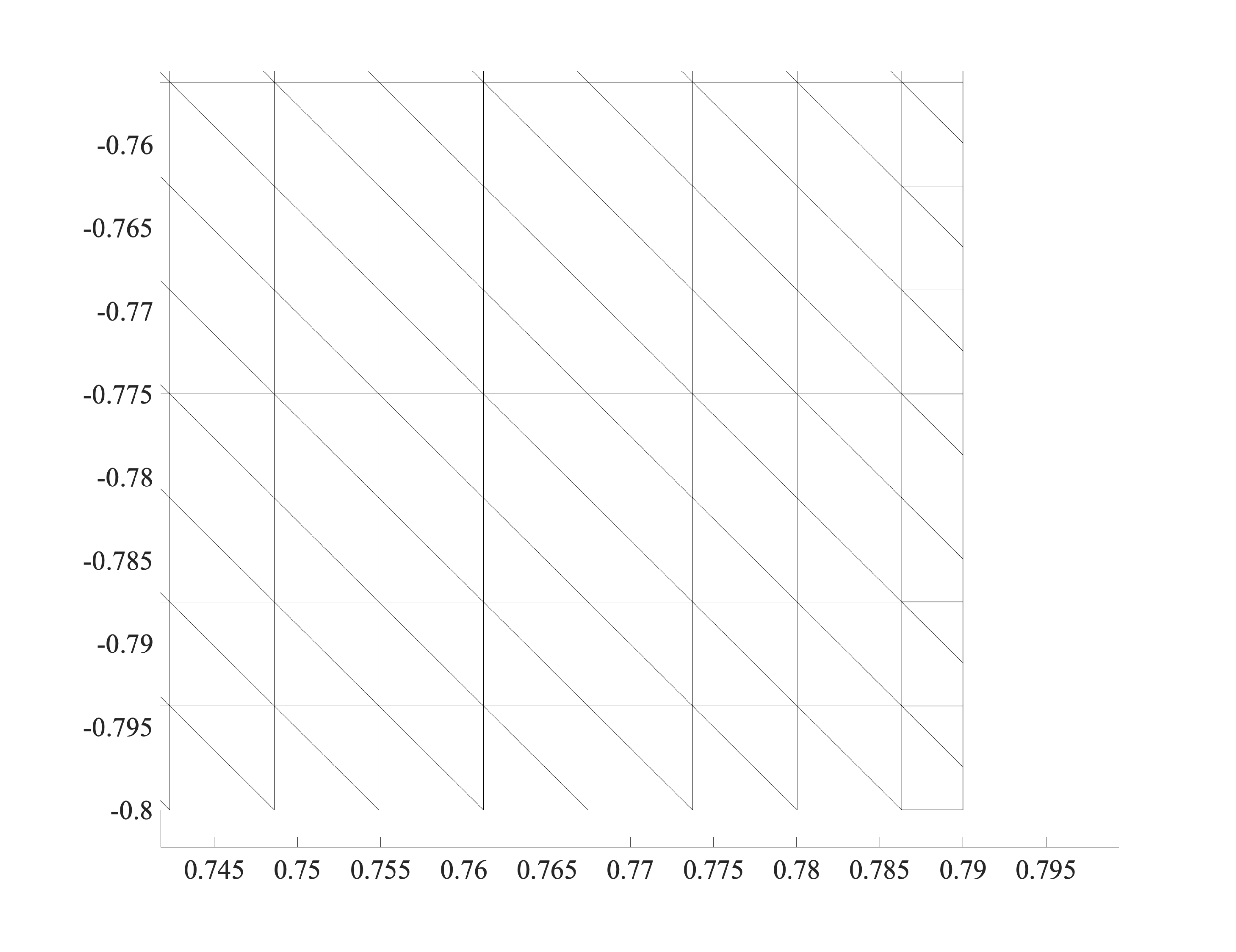}
	\end{center}
	\caption{Closeup of the mesh at the lower right corner.}
	\label{fig:cut}
\end{figure}
\begin{figure}[ht]
	\begin{center}
		\includegraphics[scale=0.20]{./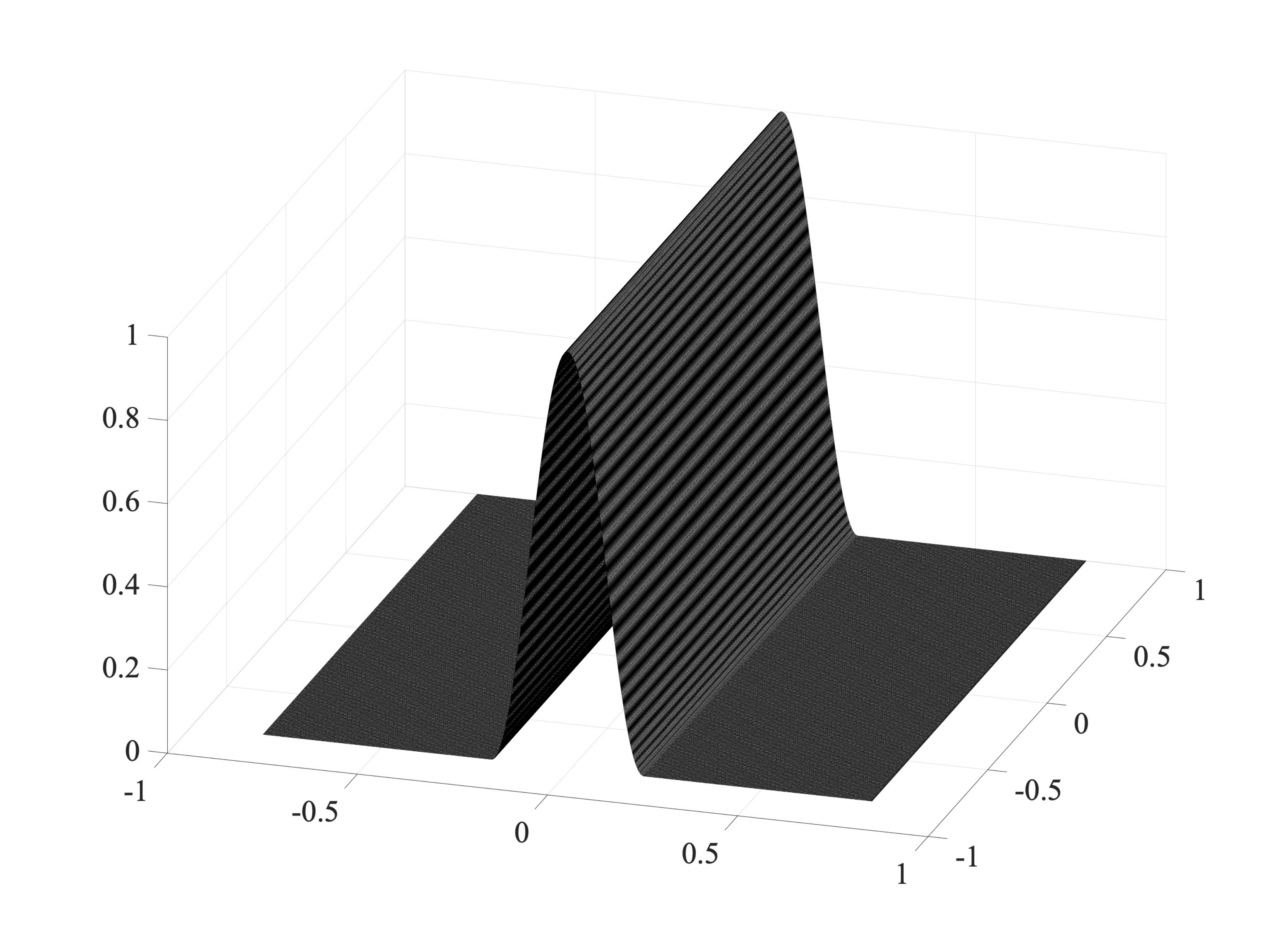}
	\end{center}
	\caption{Pulse at $t=0$ for $d_0=0.2$.}
	\label{fig:pulsestart}
\end{figure}
\begin{figure}[ht]
	\begin{center}
		\includegraphics[scale=0.16]{./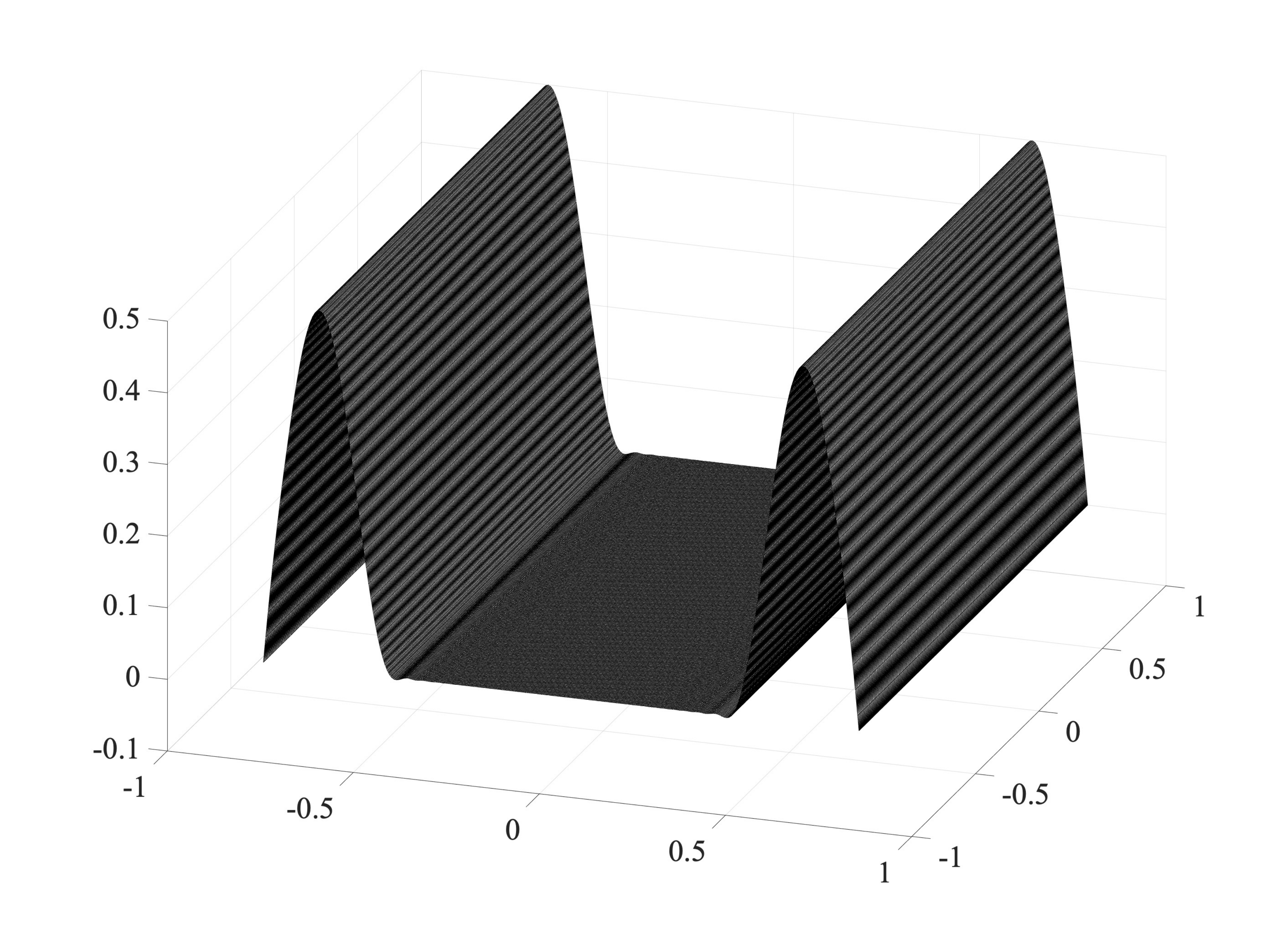}
		\includegraphics[scale=0.16]{./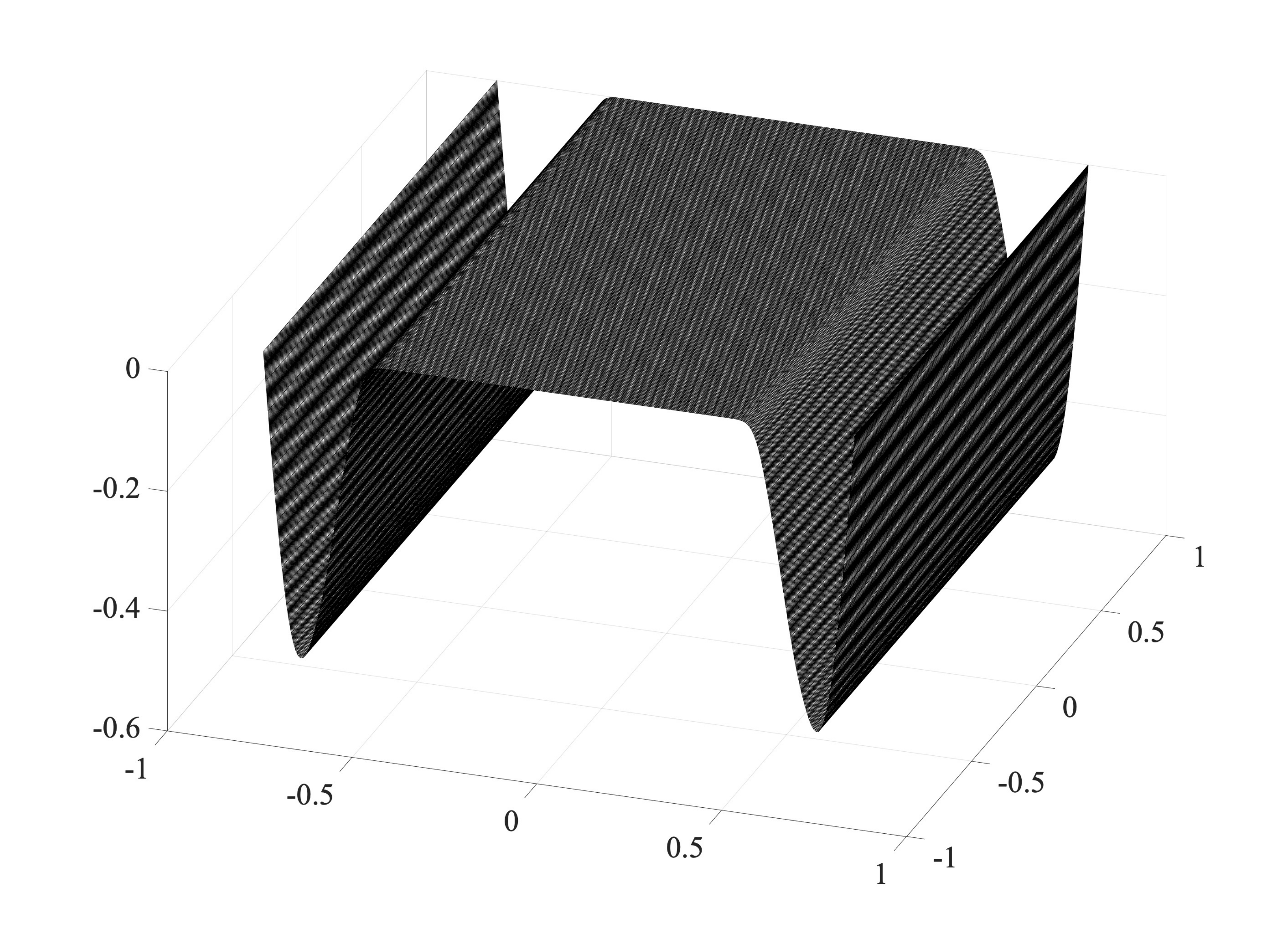}
	\end{center}
	\caption{Pulse at $t=0.65$ (left) and $t=0.9$ (right) for $d_0=0.2$.}
\end{figure}
\begin{figure}[ht]
	\begin{center}
		\includegraphics[scale=0.20]{./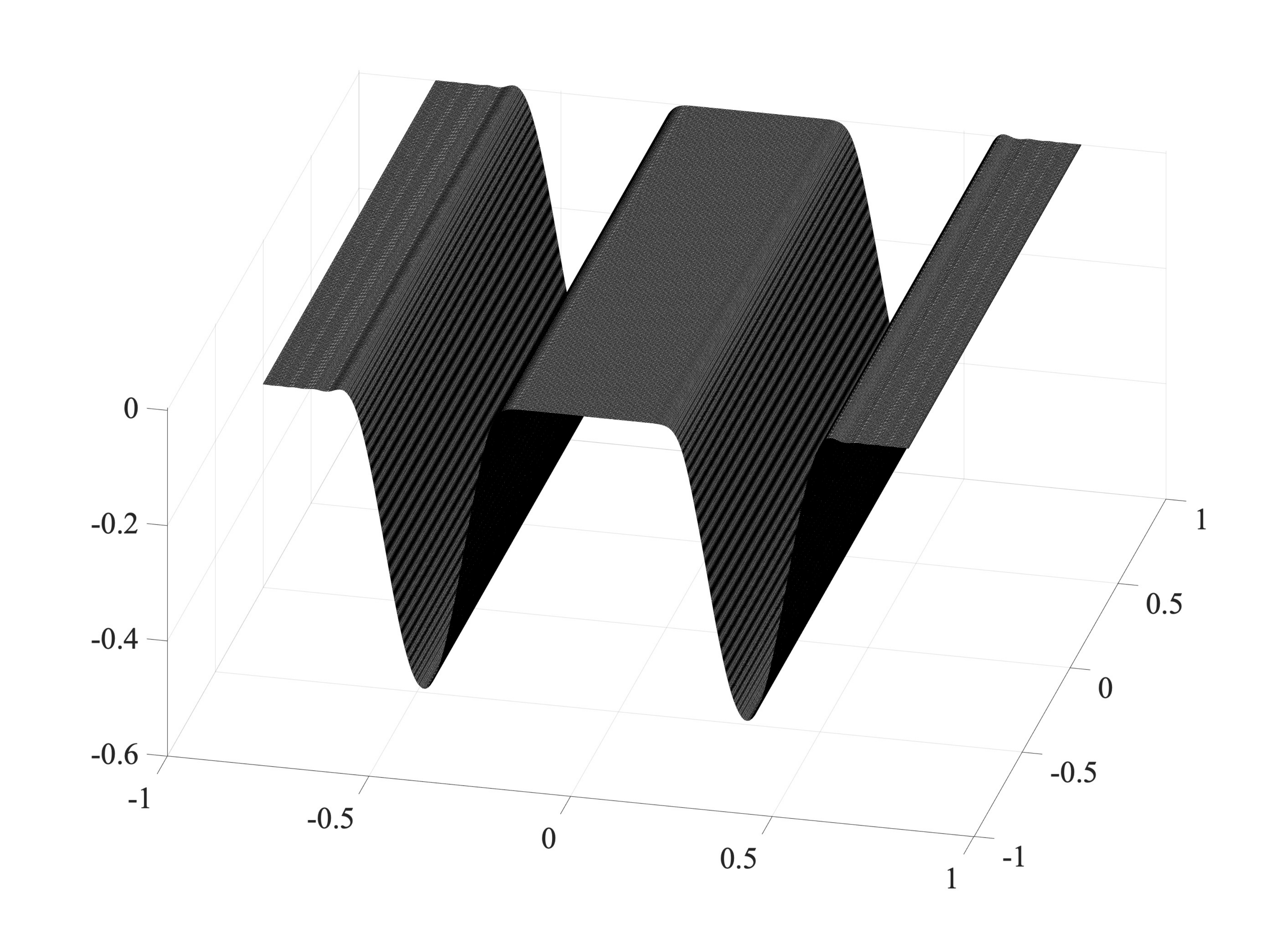}
	\end{center}
	\caption{Pulse at $t=1.2$ for $d_0=0.2$.}
\end{figure}
\begin{figure}[ht]
	\begin{center}
		\includegraphics[scale=0.20]{./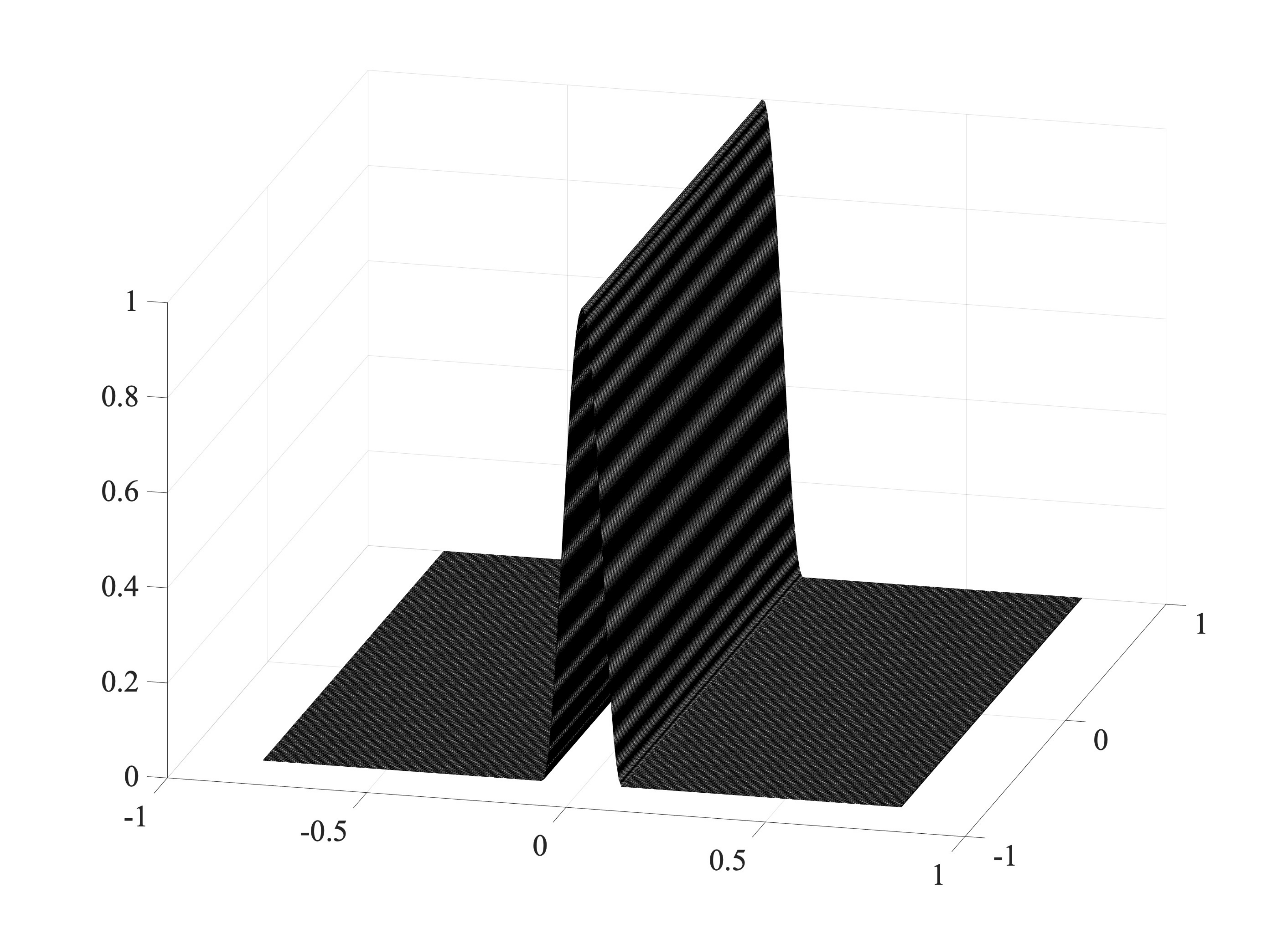}
	\end{center}
	\caption{Pulse at $t=0$ for $d_0=0.1$.}
\end{figure}
\begin{figure}[ht]
	\begin{center}
		\includegraphics[scale=0.16]{./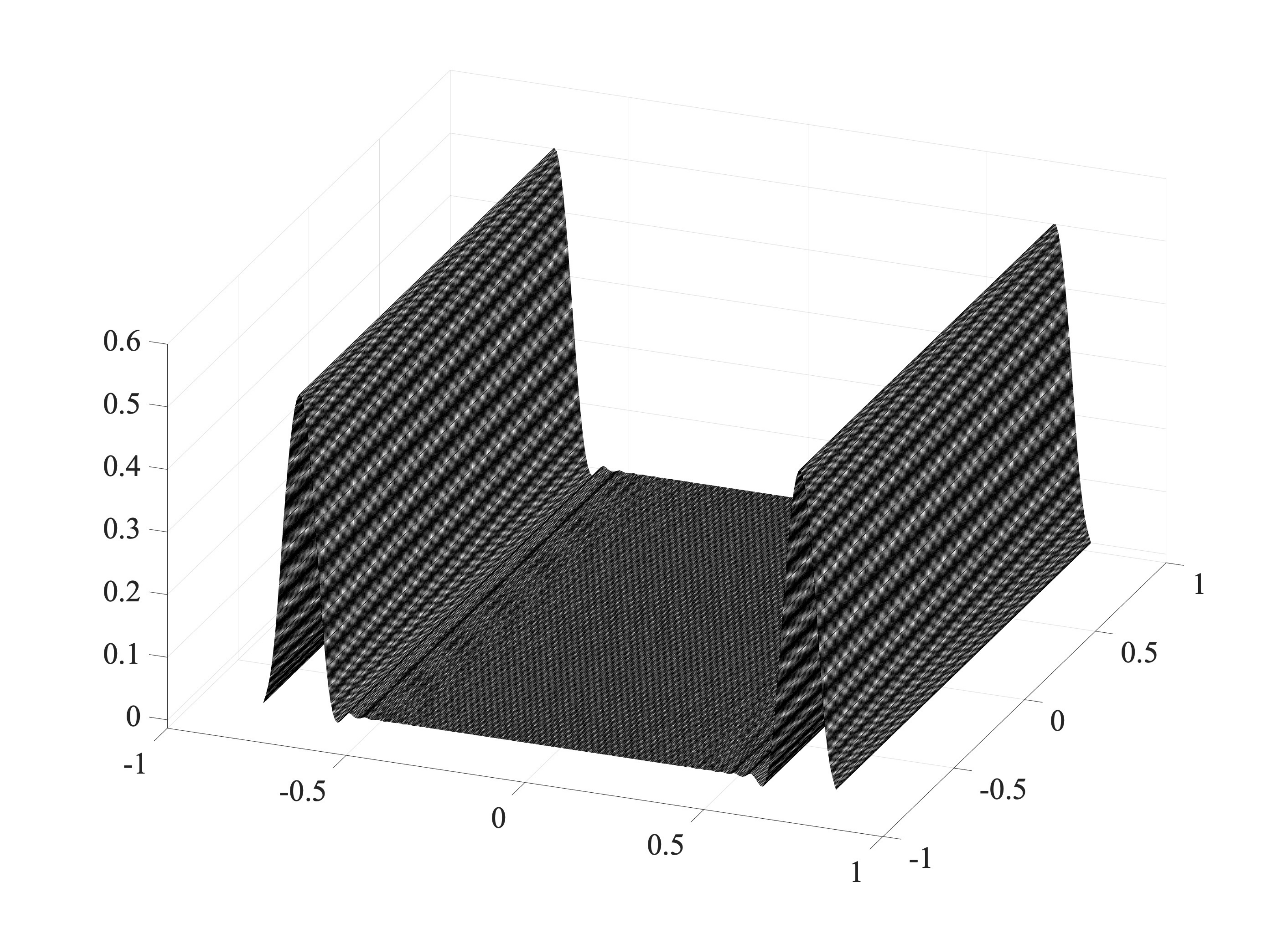}
		\includegraphics[scale=0.16]{./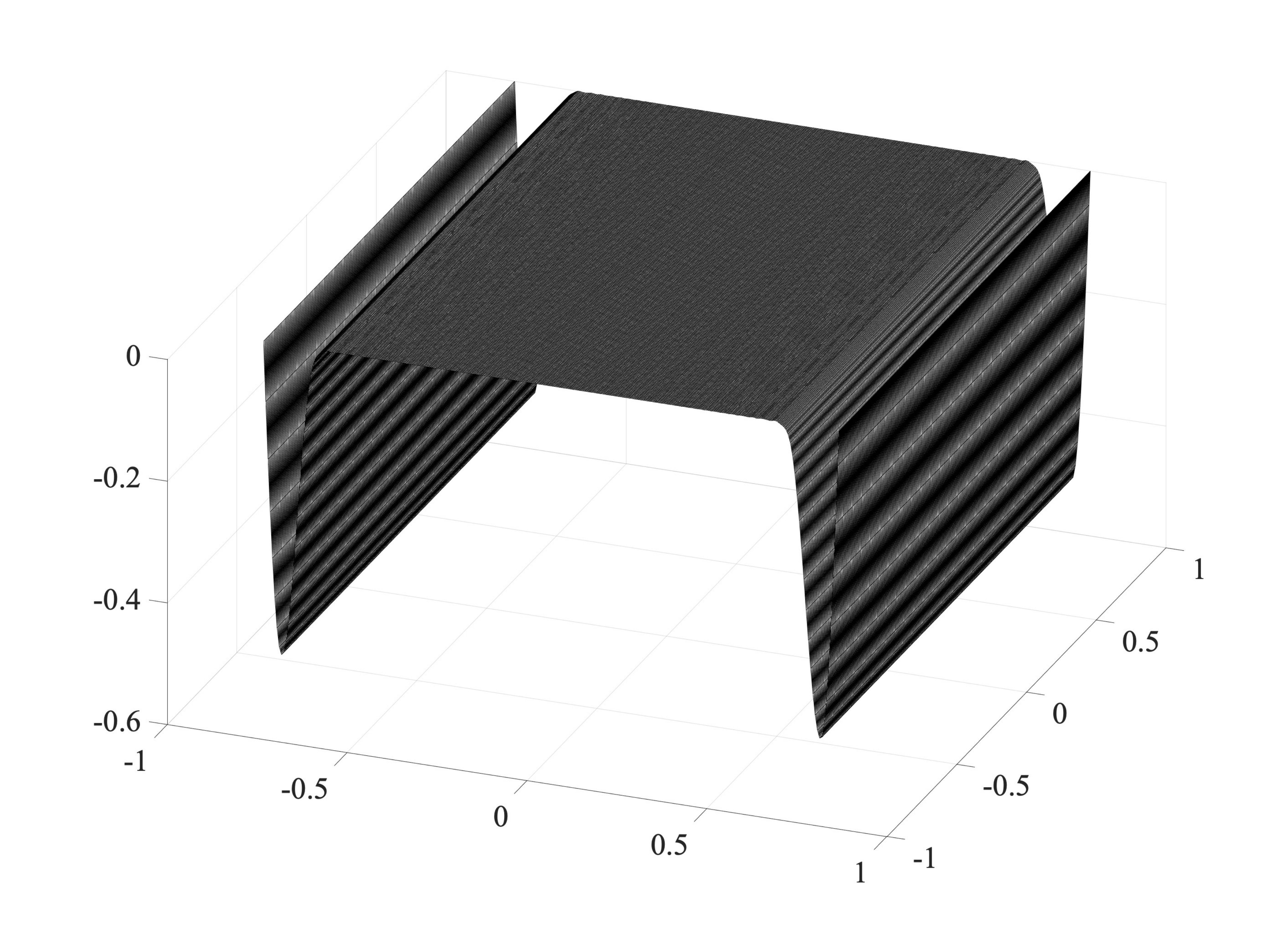}
	\end{center}
	\caption{Pulse at $t=0.7$ (left) and $t=0.85$ (right) for $d_0=0.1$.}
\end{figure}
\begin{figure}[ht]
	\begin{center}
		\includegraphics[scale=0.20]{./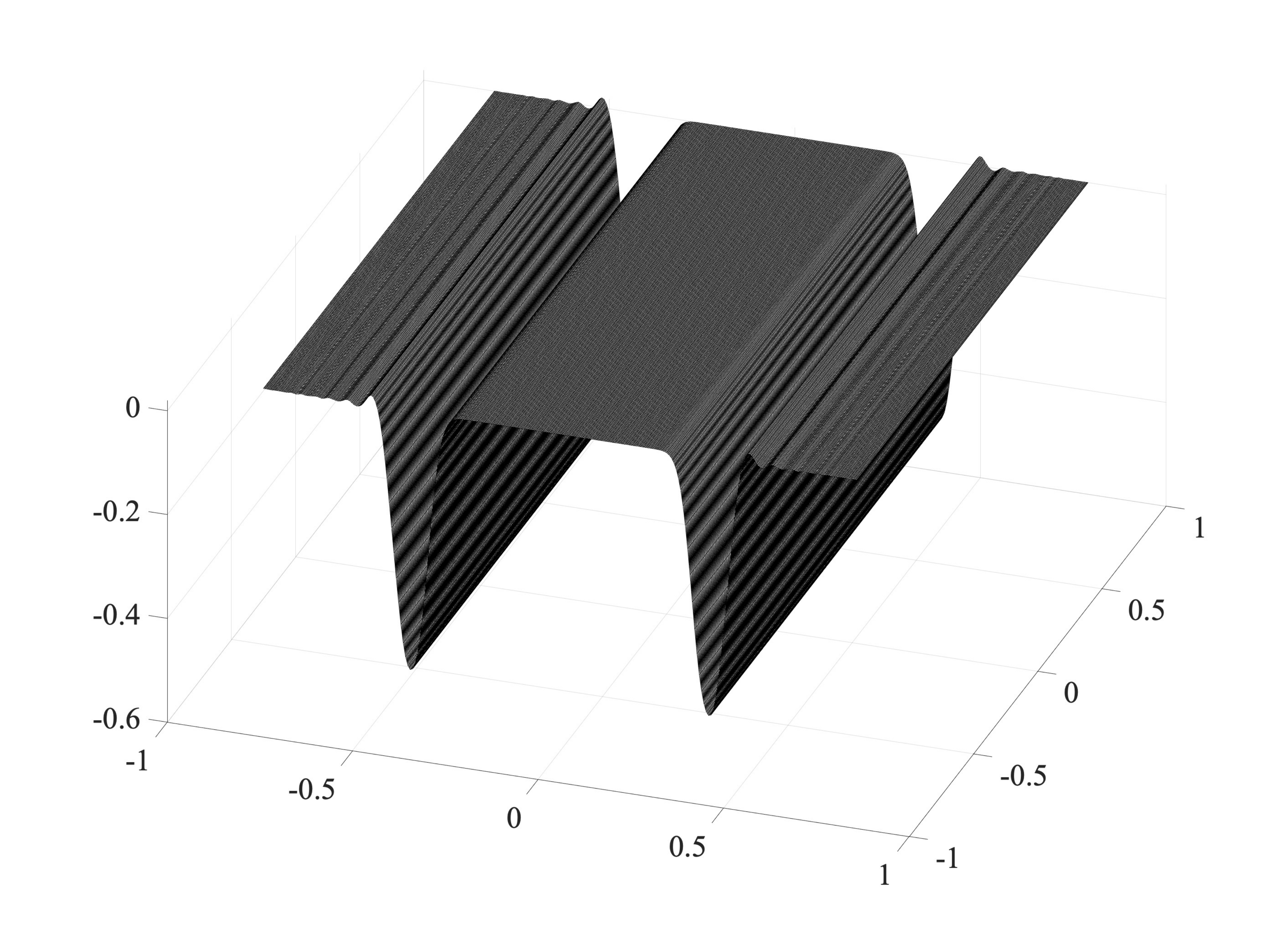}
	\end{center}
	\caption{Pulse at $t=1.2$ for $d_0=0.1$.}
\end{figure}
\begin{figure}[ht]
	\begin{center}
		\includegraphics[scale=0.20]{./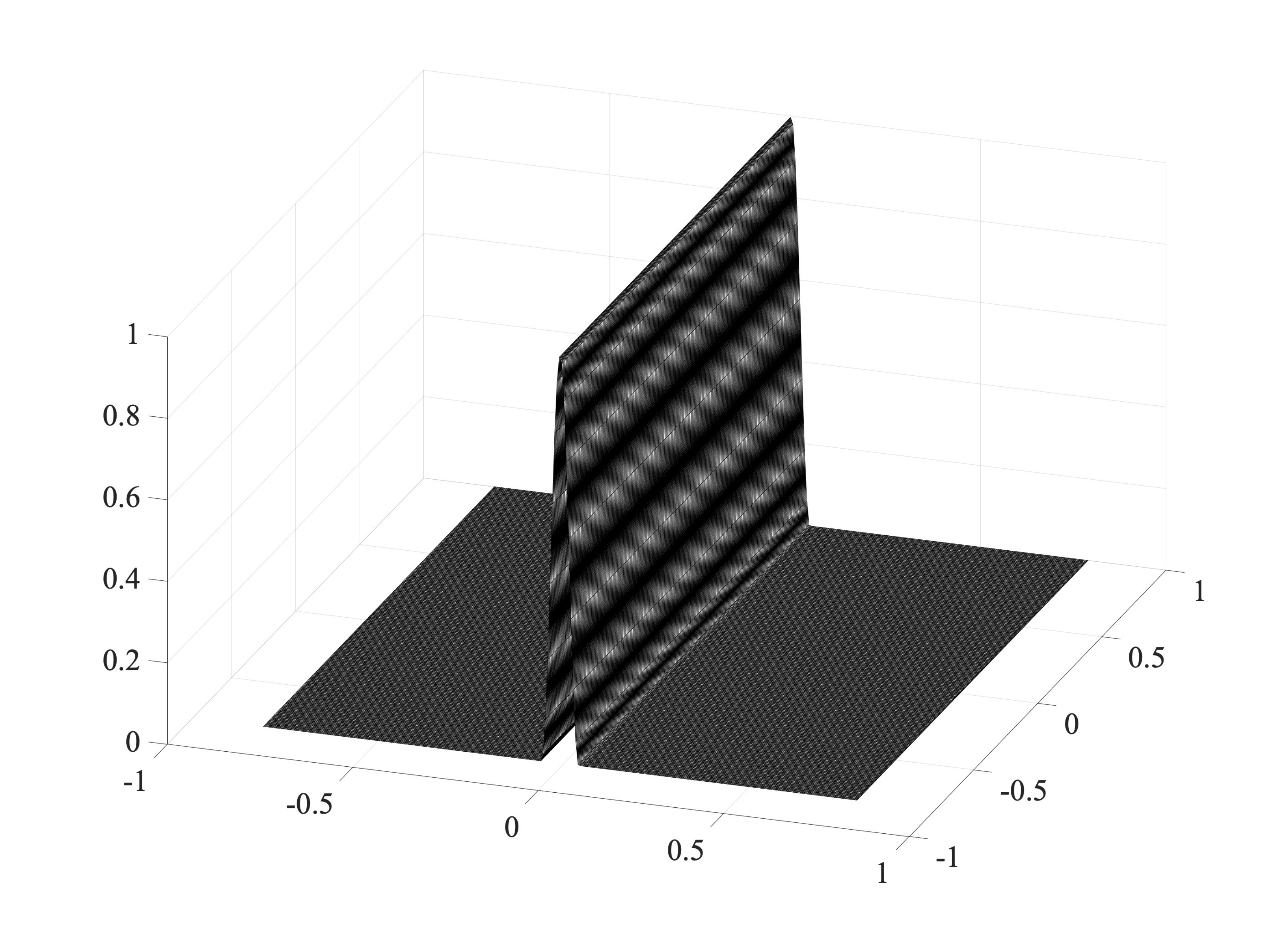}
	\end{center}
	\caption{Pulse at $t=0$ for $d_0=0.05$.}
\end{figure}
\begin{figure}[ht]
	\begin{center}
		\includegraphics[scale=0.16]{./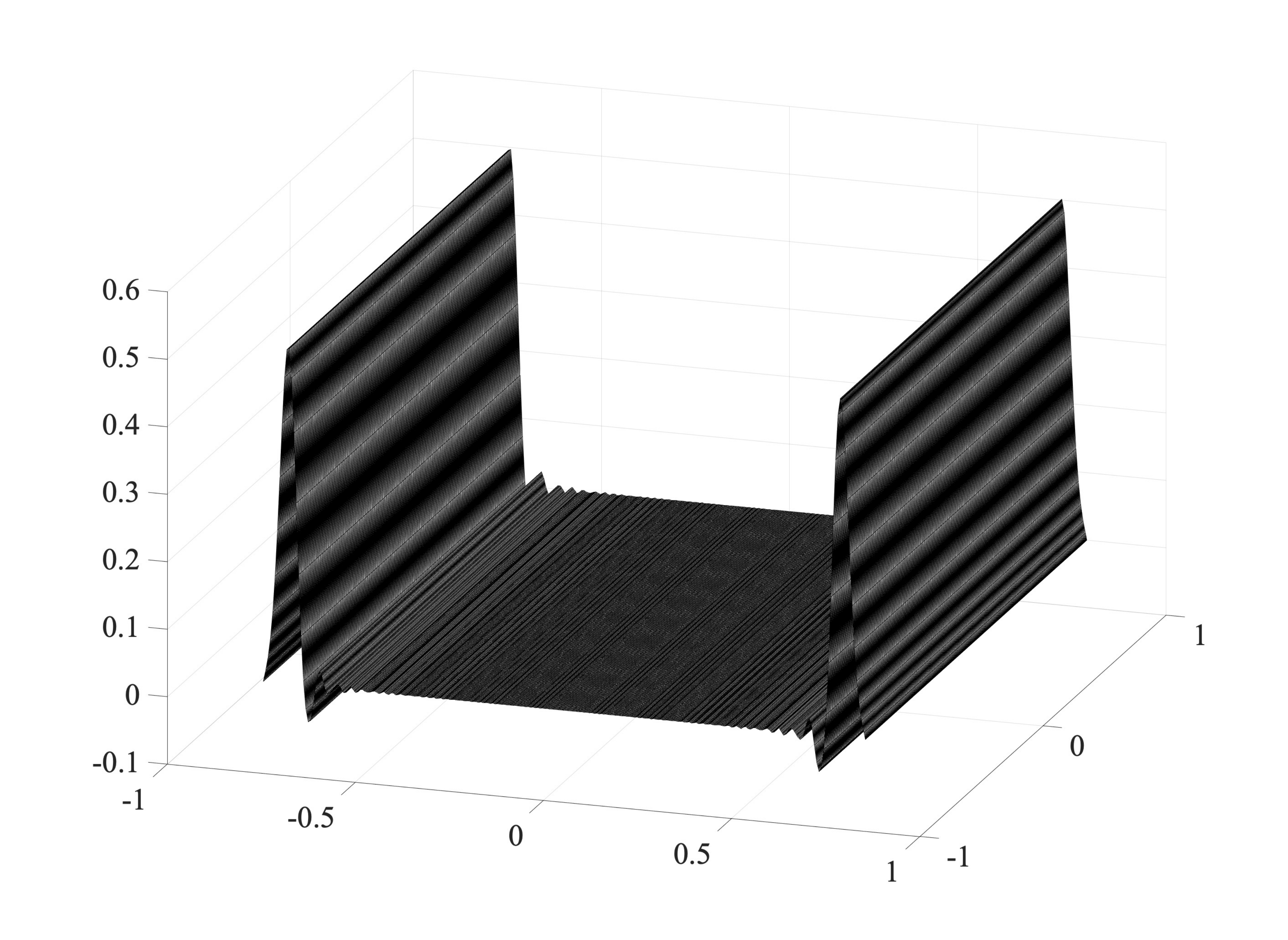}
		\includegraphics[scale=0.16]{./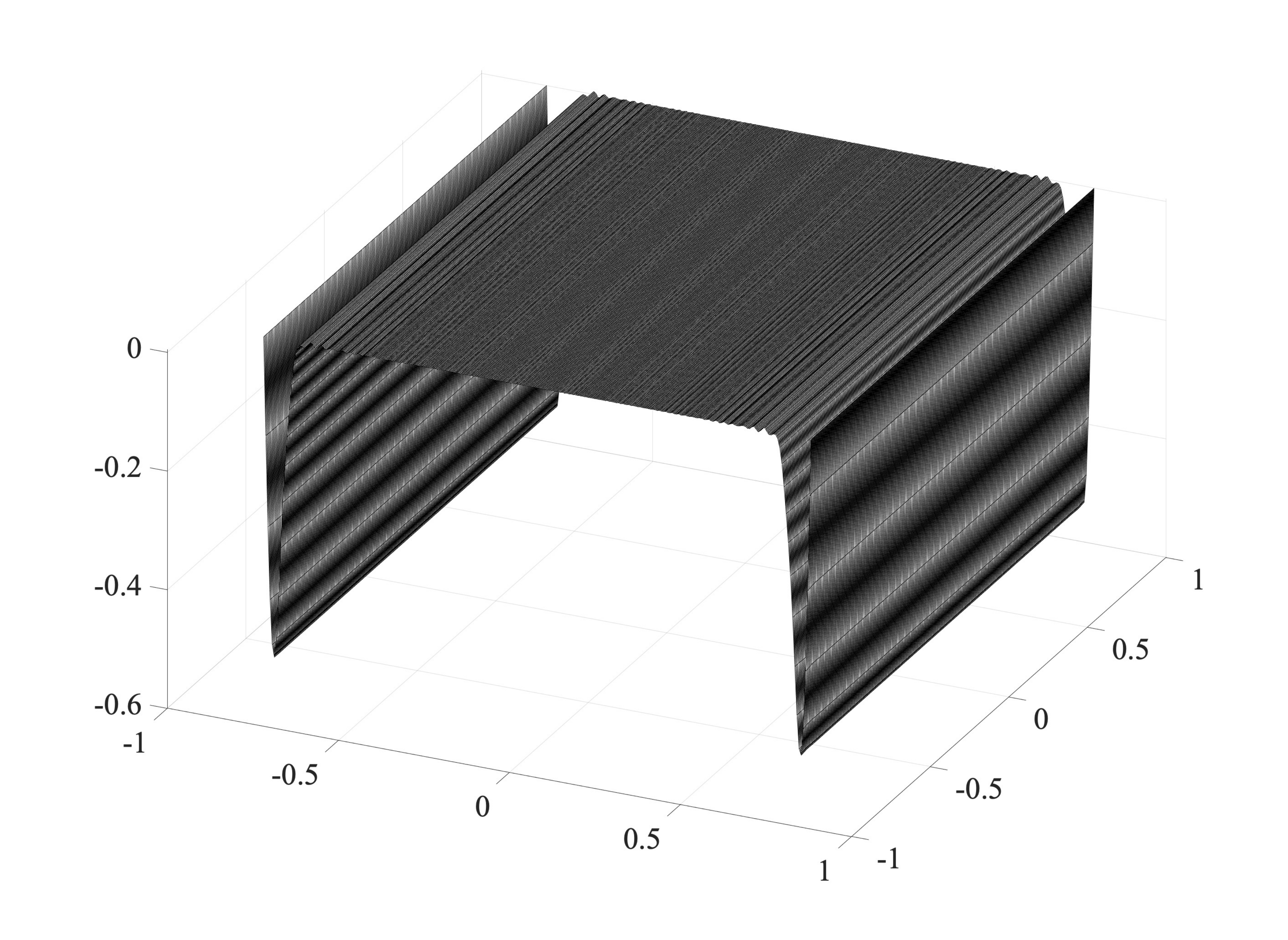}
	\end{center}
	\caption{Pulse at $t=0.74$ (left) and $t=0.84$ (right) for $d_0=0.05$.}
\end{figure}
\begin{figure}[ht]
	\begin{center}
		\includegraphics[scale=0.20]{./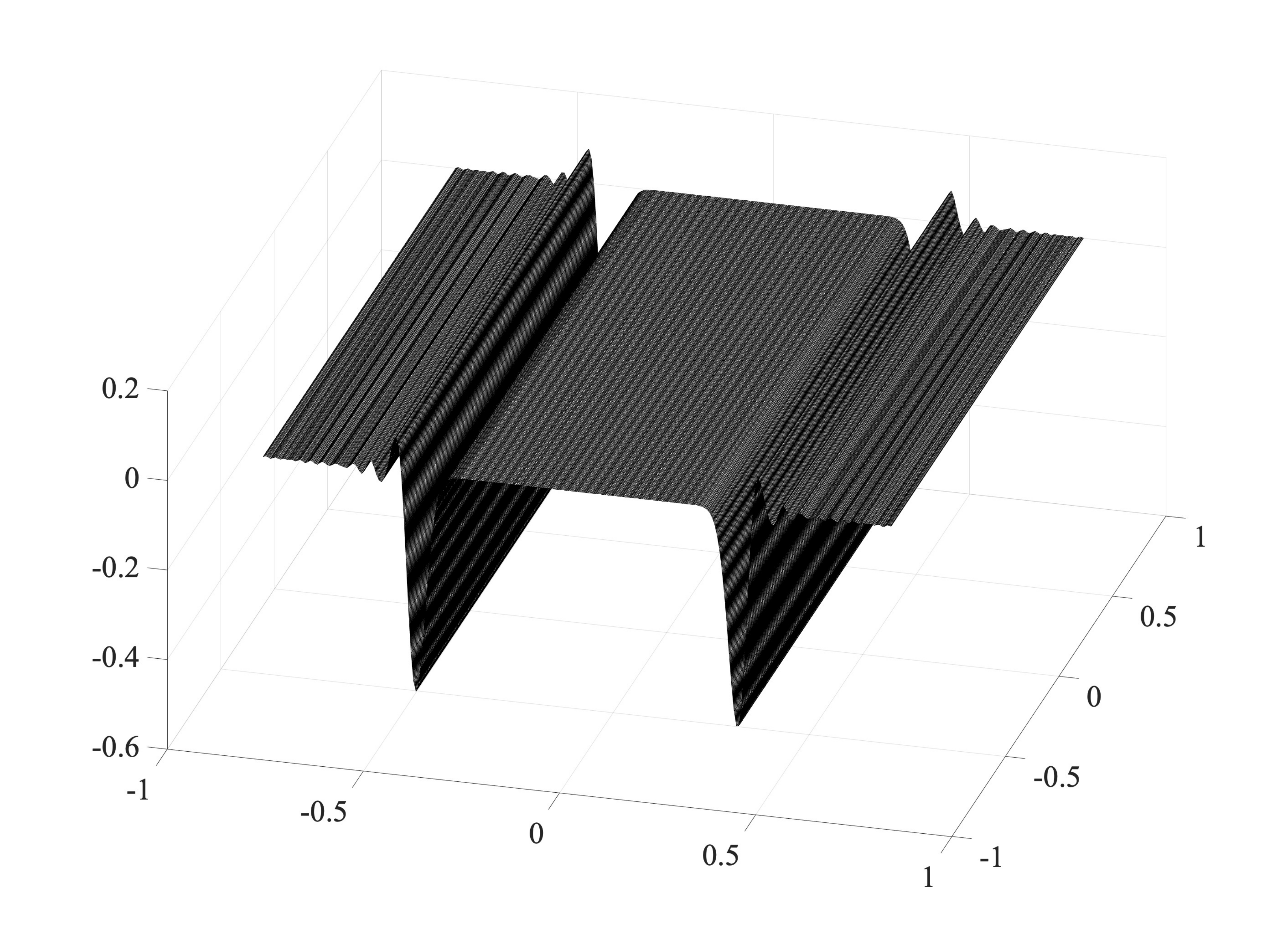}
	\end{center}
	\caption{Pulse at $t=1.2$ for $d_0=0.05$.}
	\label{fig:pulseend}
\end{figure}

\end{document}